\documentclass[12pt,reqno]{amsart}



\usepackage[all]{xy}
\usepackage{tom}
\DeclareMathOperator{\syz}{syz}
\DeclareMathOperator{\IN}{in}
\DeclareMathOperator{\tin}{t-in}

\DeclareMathOperator{\val}{val}
\DeclareMathOperator{\spoly}{spoly}

\DeclareMathOperator{\conv}{conv}
\DeclareMathOperator{\lm}{lm}
\DeclareMathOperator{\lt}{lt}
\DeclareMathOperator{\lc}{lc}
\DeclareMathOperator{\TAIL}{tail}
\DeclareMathOperator{\HDDwR}{HDDwR}
\DeclareMathOperator{\DwR}{DwR}
\DeclareMathOperator{\STD}{STD}

\DeclareMathOperator{\INTERSECTION}{INTERSECTION}
\DeclareMathOperator{\ELIMINATE}{ELIMINATE}
\DeclareMathOperator{\QUOTIENT}{QUOTIENT}
\DeclareMathOperator{\SATURATION}{SATURATION}
\DeclareMathOperator{\MEMBERSHIP}{MEMBERSHIP}
\DeclareMathOperator{\ecart}{ecart}

\newcommand{\ux}{\underline{x}}
\newcommand{\ut}{\underline{t}}
\newcommand{\uz}{\underline{z}}

\newcommand{\lang}[1]{}
\newcommand{\kurz}[1]{#1}
\newcommand{\bmath}{\kurz{\begin{math}}\lang{\begin{displaymath}} }
\newcommand{\emath}{\kurz{\end{math}}\lang{\end{displaymath}} }

\begin{document}

   \parindent0cm

   \title[Standard Bases]{Standard Bases in $K[[t_1,\ldots,t_m]][x_1,\ldots,x_n]^s$}
   \author{Thomas Markwig}
   \address{Universit\"at Kaiserslautern\\
     Fachbereich Mathematik\\
     Erwin--Schr\"odinger--Stra\ss e\\
     D --- 67663 Kaiserslautern\\
     Tel. +496312052732\\
     Fax +496312054795
     }
   \email{keilen@mathematik.uni-kl.de}
   \urladdr{http://www.mathematik.uni-kl.de/\textasciitilde keilen}
   \thanks{The author was supported by the IMA, Minneapolis.}


   \date{March, 2007.}

   \keywords{Standard basis, monomial ordering, division with remainder.}
     
   \begin{abstract}
     In this paper we study standard bases for submodules of
     $K[[t_1,\ldots,t_m]][x_1,\ldots,x_n]^s$ respectively of their
     localisation with respect to a $\ut$-local
     monomial ordering. The main step is to prove the existence of
     a division with remainder generalising and combining the
     division theorems of Grauert and Mora. Everything else then
     translates naturally. Setting either $m=0$ or $n=0$ we get
     standard bases for polynomial rings respectively for power series
     rings as a special case. We then apply this technique to show
     that the $t$-initial ideal of an ideal over the Puiseux series
     field can be read of  from a standard basis of its
     generators. This is an important step in the constructive proof
     that each point in the tropical variety of such an ideal admits a
     lifting.
   \end{abstract}

   \maketitle

   The paper follows the lines of \cite{GP02} and \cite{DS07}
   generalising the results where necessary. Basically, the only
   original parts for the standard bases are the proofs of Theorem
   \ref{thm:HDDwR} and 
   Theorem \ref{thm:DwR}, but even here they are easy generalisations
   of Grauert's respectively Mora's Division Theorem (the latter in the form
   stated and proved first by Greuel and Pfister, see \cite{GP96}; see
   also \cite{Gra94}). The paper should
   therefore rather be seen as a unified approach for the existence
   of standard bases in polynomial and power series rings, and it was
   written mostly due to the lack of a suitable reference for the
   existence of standard bases in $K[[t]][x_1,\ldots,x_n]$ which are
   needed when dealing with tropical varieties. Namely, when we want
   to show that every point in the tropical variety of an ideal $J$
   defined over the field of Puiseux series exhibits a lifting to the
   variety of $J$, then, assuming that $J$ is generated by elements in 
   $K\big[\big[t^\frac{1}{N}\big]\big][x_1,\ldots,x_n]$, we need to
   know that we can compute the so-called $t$-initial ideal of $J$ by
   computing a standard basis of the ideal defined by the  generators
   in $K\big[\big[t^\frac{1}{N}\big]\big][x_1,\ldots,x_n]$ (see Theorem
   \ref{thm:stdtin} and \cite{JMM07}).

   An important point is that if the input data is
   polynomial in both $\ut$ and $\ux$ then we can actually compute the
   standard basis since a standard basis computed in $K[t_1,\ldots,t_m]_{\langle
     t_1,\ldots,t_m\rangle}[x_1,\ldots,x_n]$ will do (see Corollary
   \ref{cor:polynomialcase}). This was previously known 
   for the case where there are no $x_i$ (see \cite{GP96}).

   In Section \ref{sec:basicnotion} we introduce the basic notions.
   Section \ref{sec:HDDwR} is devoted to the proof of the existence of
   a determinate division with remainder for polynomials
   in $K[[t_1,\ldots,t_m]][x_1,\ldots,x_m]^s$ which are homogeneous with
   respect to the $x_i$. This result is then used in Section
   \ref{sec:DwR} to show the existence of weak divisions with
   remainder for all elements of
   $K[[t_1,\ldots,t_m]][x_1,\ldots,x_m]^s$. In Section
   \ref{sec:standardbases} we introduce standard bases and prove
   the basics for these, and we prove Schreyer's Theorem and, thus
   Buchberger's Criterion in  Section \ref{sec:schreyer}. 
   \lang{In Section \ref{sec:algorithms} we describe some algorithms
     which rely on the standard basis algorithm, and if the input is
     polynomial in $\ut$ as well as in $\ux$ then the algorithms
     terminate.}
   Finally,
   in Section \ref{sec:application} we apply standard bases to study
   $t$-initial ideals of ideals over the Puiseux series field.


   \section{Basic Notation}\label{sec:basicnotion}

   Throughout the paper $K$ will be any field, $R=K[[t_1,\ldots,t_m]]$ will
   denote the ring of formal power series over $K$ and 
   \begin{displaymath}
     R[x_1,\ldots,x_n]=K[[t_1,\ldots,t_m]][x_1,\ldots,x_n]
   \end{displaymath}
   denotes the ring of polynomials in the indeterminates
   $x_1,\ldots,x_n$ with coefficients in the power series ring
   $R$. We will in general use the
   short hand notation $\ux=(x_1,\ldots,x_n)$ and
   $\ut=(t_1,\ldots,t_m)$, and the usual multi index notation 
   \begin{displaymath}
     \ut^\alpha=t_1^{\alpha_1}\cdots t_m^{\alpha_m}\;\;\;\mbox{ and
     }\;\;\; \ux^\beta=x_1^{\beta_1}\cdots x_n^{\beta_n},
   \end{displaymath}
   for $\alpha=(\alpha_1,\ldots,\alpha_m)\in\N^m$ and
   $\beta=(\beta_1,\ldots,\beta_n)\in\N^n$ . 

   \begin{definition}
     A \emph{monomial ordering} on 
     \begin{displaymath}
       \Mon(\ut,\ux)=\big\{\ut^\alpha\cdot\ux^\beta\;\big|\;\alpha
       \in\N^m,\beta\in\N^n\big\}
     \end{displaymath}
     is a total ordering $>$ on $\Mon(\ut,\ux)$ which is compatible
     with the semi group structure of $\Mon(\ut,\ux)$, i.e.\ such that
     for all $\alpha,\alpha',\alpha''\in\N^m$ and $\beta,\beta',\beta''\in\N^n$
     \begin{displaymath}
       \ut^\alpha\cdot \ux^\beta\;>\;\ut^{\alpha'}\cdot\ux^{\beta'}
       \;\;\;\Longrightarrow\;\;\;
       \ut^{\alpha+\alpha''}\cdot \ux^{\beta+\beta''}
       \;>\;\ut^{\alpha'+\alpha''}\cdot\ux^{\beta'+\beta''}.
     \end{displaymath}
     We call a monomial ordering $>$ on $\Mon(\ut,\ux)$
     \emph{$\ut$-local} if its restriction to $\Mon(\ut)$
     is \emph{local}, i.e.
     \bmath
       t_i<1\lang{\;\;\;}\mbox{ for all }\lang{\;}i=1,\ldots,m.
     \emath
     We call a $\ut$-local monomial ordering on $\Mon(\ut,\ux)$ a
     \emph{$\ut$-local weighted degree ordering} if
     there is a $w=(w_1,\ldots,w_{m+n})\in \R_{\leq 0}^m\times\R^n$ 
     such that for all $\alpha,\alpha'\in \N^m$ and $\beta,\beta'\in\N^n$
     \begin{displaymath}
       w\cdot (\alpha,\beta)>w\cdot
       (\alpha',\beta')\;\;\;\Longrightarrow\;\;\;\ut^\alpha\cdot\ux^\beta>\ut^{\alpha'}\cdot\ux^{\beta'},
     \end{displaymath}
     where
     $w\cdot(\alpha,\beta)=w_1\cdot\alpha_1+\ldots+w_m\cdot\alpha_m+w_{m+1}\cdot\beta_1+\ldots 
     +w_n\cdot\beta_n$ denotes the standard scalar product. We
     call $w$ a \emph{weight vector} of $>$. 
   \end{definition}

   \begin{example}\label{ex:lex}
     The \emph{$\ut$-local lexicographical ordering} $>_{lex}$ on
     $\Mon(\ut,\ux)$ is defined by
     \begin{displaymath}
       \ut^\alpha\cdot\ux^\beta\;>\;\ut^{\alpha'}\cdot\ux^{\beta'}
     \end{displaymath}
     if and only if
     \begin{displaymath}
       \exists\;j\in\{1,\ldots,n\}\;:\;\beta_1=\beta_1',\ldots,\beta_{j-1}=\beta_{j-1}',
       \;\mbox{ and }\;\beta_j>\beta_j',
     \end{displaymath}
     or
     \begin{displaymath}
       \big(\beta=\beta'\mbox{ and }
       \exists\;j\in\{1,\ldots,m\}\;:\;\alpha_1=\alpha_1',\ldots,\alpha_{j-1}=\alpha_{j-1}',
       \alpha_j<\alpha_j'\big).
     \end{displaymath}
   \end{example}

   \begin{example}\label{ex:weighted}
     Let $>$ be any $\ut$-local ordering and
     $w=(w_1,\ldots,w_{m+n})\in\R_{\leq 0}^m\times\R^n$, then 
     \bmath
       \ut^\alpha\cdot\ux^\beta>_w\ut^{\alpha'}\cdot\ux^{\beta'}
     \emath
     if and only if 
     \bmath
       w\cdot(\alpha,\beta)>w\cdot(\alpha',\beta')
     \emath
     or 
     \begin{displaymath}
       \big(w\cdot(\alpha,\beta)=w\cdot(\alpha',\beta')\;\mbox{ and }\;
       \ut^\alpha\cdot\ux^\beta>\ut^{\alpha'}\cdot\ux^{\beta'}\big)
     \end{displaymath}
     defines a $\ut$-local weighted degree ordering $>_w$ on
     $\Mon(\ut,\ux)$ with weight vector $w$.
   \end{example}

   Even if we are only interested in standard bases of ideals we have
   to pass to submodules of free modules in order to have syzygies at
   hand for the proof of Buchberger's Criterion via Schreyer
   orderings. 

   \begin{definition}
     We define
     \begin{displaymath}
       \Mon^s(\ut,\ux):=\big\{\ut^\alpha\cdot\ux^\beta\cdot
       e_i\;|\;\alpha\in\N^n,\beta\in\N^m, i=1,\ldots,s\big\},
     \end{displaymath}
     where $e_i=(\delta_{ij})_{j=1,\ldots,s}$ is the vector with all
     entries zero except the $i$-th one which is one. We call the
     elements of $\Mon^s(\ut,\ux)$ \emph{module monomials} or simply
     \emph{monomials}. 

     For $p,p'\in\Mon^s(\ut,\ux)\cup\{0\}$ 
     \kurz{ the notion of divisibility and of the lowest common
       multiple $\lcm(p,p')$ are defined in the obvious way. }
     \lang{
     we define
     \begin{displaymath}
       p\;\big|\;p'
     \end{displaymath}
     in words ``$p$ divides $p'$'',
     if and only if
     \begin{displaymath}
       \;\;\exists\;\alpha\in\N^m,\beta\in\N^n\;:\;
       \ut^\alpha\cdot\ux^\beta\cdot p=p'.
     \end{displaymath}
     If $p\not=0$, then we define in this case 
     \begin{displaymath}
       \frac{p}{p'}
       :=\ut^{\alpha''}\cdot\ux^{\beta''}\in\Mon(\ut,\ux).
     \end{displaymath}
     Note that, this is well-defined since $\beta$ and $\alpha$ are
     uniquely determined if $p\not=0$, and note also that for
     $p=\ut^\alpha\cdot\ux^\beta\cdot e_i$ and
     $p'=\ut^{\alpha'}\cdot\ux^{\beta'}\cdot e_j$ the condition
     $p\;|\;p'$ necessarily implies that $i=j$.

     Moreover, given two monomials $\ut^\alpha\cdot\ux^\beta\cdot e_i,
     \ut^{\alpha'}\cdot\ux^{\beta'}\cdot e_j\in\Mon^s(\ut,\ux)$ we define the
     \emph{lowest common multiple} of the two as
     \begin{displaymath}
       \lcm\big(\ut^\alpha\cdot\ux^\beta\cdot e_i,
       \ut^{\alpha'}\cdot\ux^{\beta'}\cdot e_j\big)
       :=\left\{
         \begin{array}{ll}
           t_1^{\max(\alpha_1,\alpha_1')}\cdots
           x_n^{\max(\beta_n,\beta_n')}, &
           \mbox{ if } i=j,\\
           0&\mbox{ if } i\not=j.
         \end{array}
       \right. 
     \end{displaymath}    
     Thus the least common multiple of two monomials is somehow the \emph{smallest}
     monomial which is divisible by both monomials, in the sense that
     it does divide every other monomial with this property.
     }

     Given a monomial ordering on $\Mon(\ut,\ux)$,
     a \emph{$\ut$-local monomial ordering} on $\Mon^s(\ut,\ux)$ with 
     respect to $>$ is a total ordering $>_m$ on $\Mon^s(\ut,\ux)$ which is
     strongly compatible with the operation of the multiplicative
     semi group $\Mon(\ut,\ux)$ on $\Mon^s(\ut,\ux)$ in the sense that
     \begin{displaymath}
       \ut^\alpha\cdot\ux^\beta\cdot e_i\;>_m\;\ut^{\alpha'}\cdot\ux^{\beta'}\cdot e_j
       \;\;\;\Longrightarrow\;\;\;
       \ut^{\alpha+\alpha''}\cdot\ux^{\beta+\beta''}\cdot e_i\;>_m\;
       \ut^{\alpha'+\alpha''}\cdot\ux^{\beta'+\beta''}\cdot e_j
     \end{displaymath}
     and 
     \begin{displaymath}
       \ut^\alpha\cdot\ux^\beta\;>\;\ut^{\alpha'}\cdot\ux^{\beta'}
       \;\;\;\Longleftrightarrow\;\;\;
       \ut^\alpha\cdot\ux^\beta\cdot e_i\;>_m\;\ut^{\alpha'}\cdot\ux^{\beta'}\cdot e_i
     \end{displaymath}
     for all $\beta,\beta',\beta''\in\N^n$,
     $\alpha,\alpha',\alpha''\in\N^m,i,j\in \{1,\ldots,s\}$.

     Note that due to the second condition the ordering $>_m$ on $\Mon^s(\ut,\ux)$
     determines the ordering $>$ on $\Mon(\ut,\ux)$ uniquely, and we
     will therefore usually not distinguish between them, i.e.\ we
     will use the same notation $>$ also for $>_m$, and we will not
     specify the monomial ordering on $\Mon(\ut,\ux)$ in advance, but
     instead refer to it as the \emph{induced monomial ordering on
       $\Mon(\ut,\ux)$}. 

     We call a monomial ordering on $\Mon^s(\ut,\ux)$
     \emph{$\ut$-local} if the induced monomial ordering on
     $\Mon(\ut,\ux)$ is so. 

     We call a $\ut$-local monomial ordering on $\Mon^s(\ut,\ux)$ a
     \emph{$\ut$-local weight ordering} if
     there is a $w=(w_1,\ldots,w_{m+n+s})\in \R_{\leq 0}^m\times\R^n\times\R^s$ 
     such that for all $\alpha,\alpha'\in \N^m$, $\beta,\beta'\in\N^n$ and $i,j\in\{1,\ldots,s\}$
     \begin{displaymath}
       w\cdot (\alpha,\beta,e_i)>w\cdot
       (\alpha',\beta',e_j)\;\;\;\Longrightarrow\;\;\;
       \ut^\alpha\cdot\ux^\beta\cdot e_i>\ut^{\alpha'}\cdot\ux^{\beta'}\cdot e_j,
     \end{displaymath}
     and we call $w$ a \emph{weight vector} of $>$. 
   \end{definition}

   \lang{
   \begin{example}
     Let $>$ be a $\ut$-local monomial ordering on $\Mon(\ut,\ux)$.
     \begin{enumerate}
     \item We can extend $>$ to a $\ut$-local monomial ordering on
       $\Mon^s(\ut,\ux)$ in two straight forward ways by
       \begin{displaymath}
         \ut^\alpha\cdot\ux^\beta\cdot e_i\;>\;\ut^{\alpha'}\cdot\ux^{\beta'}\cdot e_j
       \end{displaymath}
       if and only if
       \begin{displaymath}
         i<j\;\;\mbox{ or }\;\;
         (i=j\;\mbox{ and }\; \ut^\alpha\cdot\ux^\beta>\ut^{\alpha'}\cdot\ux^{\beta'}),
       \end{displaymath}
       respectively by
       \begin{displaymath}
         \ut^\alpha\cdot\ux^\beta\cdot e_i\;>\;\ut^{\alpha'}\cdot\ux^{\beta'}\cdot e_j
         \end{displaymath}
         if and only if 
         \begin{displaymath}          
           \ut^\alpha\cdot\ux^\beta>\ut^{\alpha'}\cdot\ux^{\beta'}\;\;\mbox{ or }\;\;
         (\ut^\alpha\cdot\ux^\beta=\ut^{\alpha'}\cdot\ux^{\beta'}\;\mbox{ and }\; i<j),
       \end{displaymath}
       the first ordering giving \emph{priority to the components}, the second one
       giving \emph{priority to the monomials}. 
     \item        If $w\in\R_{\leq 0}^m\times\R^n$ and $>_w$ is the $\ut$-local weighted
       degree ordering with respect to $>$ and $w$ from Example
       \ref{ex:weighted} then the corresponding module monomial
       ordering giving priority to the  monomials is a $\ut$-local
       weight ordering on $\Mon^s(\ut,\ux)$ with weight vector
       $(w,0,\ldots,0)$. 
     \item The module monomial ordering corresponding to $>$ and
       giving priority to the components is a $\ut$-local weight ordering
       with respect to the weight vector
       $(0,\ldots,0,s,s-1,\ldots,1)$.
     \end{enumerate}
   \end{example}
   }

   \begin{example}\label{ex:weighteddegree}
     Let $w\in\R_{\leq 0}^m\times\R^{n+s}$ and let
     $>$ be any $\ut$-local monomial ordering on
     $\Mon^s(\ut,\ux)$ such that the induced $\ut$-local monomial
     ordering on $\Mon(\ut,\ux)$ is a $\ut$-local weighted degree
     ordering with respect to the weight vector $(w_1,\ldots,w_{m+n})$. Then
     \begin{displaymath}
       \ut^\alpha\cdot\ux^\beta\cdot e_i\;>_w\;\ut^{\alpha'}\cdot\ux^{\beta'}\cdot e_j
     \end{displaymath}
     if and only if
     \begin{displaymath}
       w\cdot(\alpha,\beta,e_i)>w\cdot(\alpha',\beta',e_j)
     \end{displaymath}
     or
     \begin{displaymath}
       \big(w\cdot(\alpha,\beta,e_i)=w\cdot(\alpha',\beta',e_j)\;\mbox{ and }\;
       \ut^\alpha\cdot\ux^\beta\cdot
       e_i>\ut^{\alpha'}\cdot\ux^{\beta'}\cdot e_j\big)         
     \end{displaymath}
     defines a $\ut$-local weight monomial ordering on
     $\Mon^s(\ut,\ux)$ with weight vector $w$. In particular, there
     exists such a monomial ordering.
   \end{example}
   
   \begin{remark}
     In the following we will mainly be concerned with monomial
     orderings on $\Mon^s(\ut,\ux)$ and with submodules of free modules over
     $R[\ux]$, but all these results specialise to $\Mon(\ut,\ux)$
     and ideals by just setting $s=1$.
     \hfill$\Box$
   \end{remark}

   \lang{
   The following lemma follows easily from the above
   definitions.

   \begin{lemma}
     The following conditions for a monomial ordering $>$ on
     $\Mon^s(\ut,\ux)$ are equivalent:
     \begin{enumerate}
     \item $>$ is $\ut$-local.
     \item $\ut^\alpha<1$ for all $0\not=\alpha\in\N^m$.
     \item $\ut^{\alpha+\alpha'}\cdot
       \ux^\beta<\ut^{\alpha'}\cdot\ux^\beta$ for all
       $\alpha,\alpha'\in\N^m$, $\beta\in\N^n$.
     \item $\ut^{\alpha+\alpha'}\cdot
       \ux^\beta\cdot e_i<\ut^{\alpha'}\cdot\ux^\beta\cdot e_i$ for all
       $\alpha,\alpha'\in\N^m$, $\beta\in\N^n$, $i=1,\ldots,s$.
     \end{enumerate}
   \end{lemma}
   }

   For a $\ut$-local monomial ordering we can introduce the notions of leading
   monomial and leading term of elements in \lang{$R[\ux]$.}\kurz{$R[\ux]^s$.}

   \lang{
   \begin{definition}
     Let $>$ be a $\ut$-local monomial ordering on $\Mon(\ut,\ux)$.
     We call
     \begin{displaymath}
       0\not=f=\sum_{|\beta|=0}^d \sum_{|\alpha|=0}^\infty
       a_{\alpha,\beta}\cdot \ut^\alpha\cdot \ux^\beta\in R[\ux],       
     \end{displaymath}     
     with
     $a_{\alpha,\beta}\in K$, $|\beta|=\beta_1+\ldots+\beta_n$
     and $|\alpha|=\alpha_1+\ldots+\alpha_m$, the \emph{distributive
       representation} of $f$, 
     \begin{displaymath}
       \mathcal{M}_f:=\big\{\ut^\alpha\cdot\ux^\beta\;|\;a_{\alpha,\beta}\not=0\big\}
     \end{displaymath}
     the set of \emph{monomials of $f$} and
     \begin{displaymath}
       \mathcal{T}_f:=\big\{a_{\alpha,\beta}\cdot \ut^\alpha\cdot\ux^\beta\;|\;a_{\alpha,\beta}\not=0\big\}
     \end{displaymath}
     the set of \emph{terms of $f$}.
     
     Moreover,
     \begin{displaymath}
       \lm_>(f):=\max\big\{\ut^\alpha\cdot\ux^\beta\;|\;\ut^\alpha\cdot\ux^\beta\in\mathcal{M}_f\big\}
     \end{displaymath}
     is called the \emph{leading monomial} of $f$. Note, that this maximum
     exists since  the number of $\beta$'s occurring in $f$ is finite and the
     ordering is local with respect to $\ut$.

     If $\lm_>(f)=\ut^\alpha\cdot \ux^\beta$ then we call
     \begin{displaymath}
       \lc_>(f):=a_{\alpha,\beta}
     \end{displaymath}
     the \emph{leading coefficient} of $f$,
     \begin{displaymath}
       \lt_>(f):=a_{\alpha,\beta}\cdot\ut^\alpha\cdot\ux^\beta
     \end{displaymath}
     its \emph{leading term}, and
     \begin{displaymath}
       \TAIL_>(f):=f-\lt_>(f)
     \end{displaymath}
     its \emph{tail}.

     For the sake of completeness we define
     \begin{displaymath}
       \lm_>(0):=0,\;\;\;\lt_>(0):=0,\;\;\;\lc_>(0):=0,\;\;\;\TAIL_>(f)=0,
     \end{displaymath}
     and
     \begin{displaymath}
       0<\ut^\alpha\cdot\ux^\beta\;\;\;\forall\;\alpha\in\N^m,\beta\in\N^n.
     \end{displaymath}

     Finally, for a subset $G\subseteq R[\ux]$ we call  the ideal 
     \begin{displaymath}
       L_>(G)=\langle \lm_>(f)\;|\;f\in G\rangle\lhd K[\ut,\ux]
     \end{displaymath}
     in the polynomial ring $K[\ut,\ux]$ generated by all the leading
     monomials of elements in $G$ the \emph{leading ideal} of $G$. 
   \end{definition}

   Analogously we define the notions for elements in $R[\ux]^s$.
   }

   \begin{definition}
     Let $>$ be a $\ut$-local monomial ordering on $\Mon^s(\ut,\ux)$.
     We call
     \begin{displaymath}
       0\not=f=\sum_{i=1}^s\sum_{|\beta|=0}^d \sum_{|\alpha|=0}^\infty
       a_{\alpha,\beta,i}\cdot \ut^\alpha\cdot \ux^\beta\cdot e_i\in R[\ux]^s,       
     \end{displaymath}
     with
     $a_{\alpha,\beta,i}\in K$, $|\beta|=\beta_1+\ldots+\beta_n$
     and $|\alpha|=\alpha_1+\ldots+\alpha_m$, the \emph{distributive
       representation} of $f$, 
     \bmath
       \mathcal{M}_f:=\big\{\ut^\alpha\cdot\ux^\beta\cdot e_i\;|\;a_{\alpha,\beta,i}\not=0\big\}
     \emath
     the set of \emph{monomials of $f$} and
     \bmath
       \mathcal{T}_f:=\big\{a_{\alpha,\beta,i}\cdot
       \ut^\alpha\cdot\ux^\beta\cdot e_i\;|\;a_{\alpha,\beta,i}\not=0\big\}
     \emath
     the set of \emph{terms of $f$}.
     
     Moreover,
     \bmath
       \lm_>(f):=\max\{\ut^\alpha\cdot\ux^\beta\cdot
       e_i\;|\;\ut^\alpha\cdot\ux^\beta\cdot e_i\in\mathcal{M}_f\} 
     \emath
     is called the \emph{leading monomial} of $f$. Note again, that this maximum
     exists since  the number of $\beta$'s occurring in $f$ and the
     number of $i$'s is finite and the
     ordering is local with respect to $\ut$.

     If $\lm_>(f)=\ut^\alpha\cdot \ux^\beta\cdot e_i$ then we call
     \bmath
       \lc_>(f):=a_{\alpha,\beta,i}
     \emath
     the \emph{leading coefficient} of $f$,
     \bmath
       \lt_>(f):=a_{\alpha,\beta,i}\cdot\ut^\alpha\cdot\ux^\beta\cdot e_i
     \emath
     its \emph{leading term}, and
     \bmath
       \TAIL_>(f):=f-\lt_>(f)
     \emath
     its \emph{tail}.

     For the sake of completeness we \lang{again} define
     \bmath
       \lm_>(0):=0,\lang{\;\;}\;\lt_>(0):=0,\lang{\;\;}\;\lc_>(0):=0,\lang{\;\;}\;\TAIL_>(f)=0,
     \emath
     and
     \bmath
       0<\ut^\alpha\cdot\ux^\beta\cdot e_i\;\;\;\forall\;\alpha\in\N^m,\beta\in\N^n,i\in\N.
     \emath

     Finally, for a subset $G\subseteq R[\ux]^s$ we call  the submodule
     \begin{displaymath}
       L_>(G)=\langle \lm_>(f)\;|\;f\in G\rangle\leq K[\ut,\ux]^s
     \end{displaymath}
     of the free module $K[\ut,\ux]^s$ over the polynomial ring
     $K[\ut,\ux]$ generated by all the leading 
     monomials of elements in $G$ the \emph{leading submodule} of $G$. 
   \end{definition}

   \lang{
   Since the monomial ordering is compatible with the semi group
   structure on $\Mon(\ut,\ux)$ respectively with the operation of
   the semi group $\Mon(\ut,\ux)$ on $\Mon^s(\ut,\ux)$ the statements in the following lemma
   are obvious.

   \begin{lemma}
     Let $f\in R[\ux]$ and $g\in R[\ux]^s$. 
     \begin{enumerate}
     \item $\lm_>(f\cdot g)=\lm_>(f)\cdot \lm_>(g)$,
     \item $\lc_>(f\cdot g)=\lc_>(f)\cdot \lc_>(g)$,
     \item $\lt_>(f\cdot g)=\lt_>(f)\cdot \lt_>(g)$.
     \end{enumerate}
   \end{lemma}
   \begin{proof}
     Since the statements are true when $f$ or $g$ is zero, we may
     assume that neither of them is so. Note that for any terms
     $a_{\alpha,\beta}\cdot\ut^\alpha\cdot\ux^\beta$ of $f$ and
     $b_{\alpha',\beta',i}\cdot\ut^{\alpha'}\cdot\ux^{\beta'}\cdot e_i$ of $g$ we
     have
     \begin{displaymath}
       \lm_>(f)\cdot\lm_>(g)\geq \ut^\alpha\cdot
       \ux^\beta\cdot\ut^{\alpha'}\cdot\ux^{\beta'}\cdot e_i
     \end{displaymath}
     with equality if and only if $\lm_>(f)=\ut^\alpha\cdot \ux^\beta$
     and $\lm_>(g)=\ut^{\alpha'}\cdot\ux^{\beta'}\cdot e_i$. This proves the lemma.
   \end{proof}
   }

   We know that in general a standard basis of an ideal respectively submodule $I$ will not be a
   generating set of $I$ itself, but only of the ideal respectively submodule which
   $I$ generates in the localisation with respect to the monomial
   ordering. We therefore introduce this notion here as well.

   \begin{definition}
     Let $>$ be a $\ut$-local monomial ordering on $\Mon(\ut,\ux)$,
     then 
     \bmath
       S_>=\{u\in R[\ux]\;|\;\lt_>(u)=1\}
     \emath
     is the \emph{multiplicative set associated to $>$}, and 
     \bmath
       R[\ux]_>=S_>^{-1}R[\ux]=\left\{\frac{f}{u}\;\Big|\;f\in R[\ux],u\in S_>\right\}
     \emath
     is the \emph{localisation of $R[\ux]$ with respect to $>$}.
   \end{definition}

   \lang{
   \begin{remark}
   }
     If $>$ is a $\ut$-local monomial ordering with 
     $x_i>1$ for all $i=1,\ldots,n$  (e.g.\
     $>_{lex}$ from Example \ref{ex:lex}), then
     $S_>\subset R^*$, and therefore $R[\ux]_>=R[\ux]$.
   \lang{
   \end{remark}
   }

   It is straight forward to extend the notions of leading monomial,
   leading term and leading coefficient to $R[\ux]_>$ and free modules
   over this ring.

   \begin{definition}
     Let $>$ be a $\ut$-local monomial ordering on $\Mon^s(\ut,\ux)$,
     $g=\frac{f}{u}\in R[\ux]_>^s$ with $u\in S_>$, and $G\subseteq
     R[\ux]_>^s$. We then define 
     the \emph{leading monomial}, the \emph{leading coefficient}
     respectively the \emph{leading term} of $g$ as
     \begin{displaymath}
       \lm_>(g):=\lm_>(f),\;\;\;
       \lc_>(g):=\lc_>(f),\;\;\;\mbox{ resp. }\;\;\;
       \lt_>(g):=\lt_>(f),       
     \end{displaymath}
     and the \emph{leading ideal} (if $s=1$) respectively
     \emph{leading submodule} of $G$
     \begin{displaymath}
       L_>(G)=\langle \lm_>(h)\;|\;h\in G\rangle\leq K[\ut,\ux]^s.
     \end{displaymath}
     These definitions are independent of the chosen representative,
     since if $g=\frac{f}{u}=\frac{f'}{u'}$ then $u'\cdot f=u\cdot f'$,
     and hence
     \begin{displaymath}
       \lt_>(f)=\lt_>(u')\cdot\lt_>(f)=\lt_>(u'\cdot f)=\lt_>(u\cdot f')
       =\lt_>(u)\cdot\lt_>(f')=\lt_>(f').
     \end{displaymath}
   \end{definition}

   \begin{remark}\label{rem:leadingsubmodule}
     Note that the leading submodule of a submodule in $R[\ux]_>^s$
     is a submodule in a free module over the polynomial ring $K[\ut,\ux]$ over the base
     field, and note that for $J\leq R[\ux]_>^s$ we obviously have
     \bmath
       L_>(J)=L_>(J\cap R[\ux]^s),
     \emath
     and similarly for $I\leq R[\ux]^s$ we have
     \bmath
       L_>(I)=L_>\big(\langle I\rangle_{R[\ux]_>}\big),
     \emath
     since every element of $\langle I\rangle_{R[\ux]_>}$ is of the
     form $\frac{f}{u}$ with $f\in I$ and $u\in S_>$.
     \lang{\hfill$\Box$}
   \end{remark}

   In order to be able to work either theoretically or even
   computationally with standard bases it is vital to have a division with
   remainder and possibly an algorithm to compute it. We will therefore
   generalise Grauert's and Mora's Division
   with remainder. For this we first would like to consider the
   different qualities a division with remainder may satisfy.

   \begin{definition}     
     Let $>$ be a $\ut$-local monomial ordering on $\Mon^s(\ut,\ux)$,
     and let $A=R[\ux]$ or $A=R[\ux]_>$, where we consider the latter
     as a subring of $K[[\ut,\ux]]$ in order to have the notion of
     terms of elements at hand.

     Suppose we have  $f,g_1,\ldots,g_k,r\in A^s$ and
     $q_1,\ldots,q_k\in A$ such that
     \begin{equation}\label{eq:standrep:1}
       f=q_1\cdot g_1+\ldots+q_k\cdot g_k+r.
     \end{equation}
     With the notation $r=\sum_{j=1}^sr_j\cdot e_j$,
     $r_1,\ldots,r_s\in A$, we say that \eqref{eq:standrep:1}
     satisfies with respect to $>$ 
     the condition
     \begin{enumerate}
     \item[(ID1)] iff $\lm_>(f)\geq\lm_>(q_i\cdot g_i)$ for all
       $i=1,\ldots,k$,
     \item[(ID2)] iff $\lm_>(g_i)\;\not|\;\lm_>(r)$ for $i=1,\ldots,k$,
       unless $r=0$,
     \item[(DD1)] iff for $j<i$ no term of $q_i\cdot\lm_>(g_i)$ is
       divisible by $\lm_>(g_j)$,
     \item[(DD2)] iff no term of $r$ is divisible by $\lm_>(g_i)$ for
       $i=1,\ldots,k$. 
     \item[(SID2)] iff $\lm_>(g_i)\;\not|\;\lm_>(r_j\cdot e_j)$ unless
       $r_j=0$ for all $i$ and $j$.
     \end{enumerate}
     Here, ``ID'' stands for \emph{indeterminate division with
       remainder} while ``DD'' means \emph{determinate division with
       remainder} and the ``S'' in (SID2) represents
     \emph{strong}. Accordingly, we call a representation of $f$ as in     
     \eqref{eq:standrep:1} a \emph{determinate division with
       remainder} 
     of $f$ with respect to $(g_1,\ldots,g_k)$ if 
     it satisfies (DD1) and (DD2), while we call it an
     \emph{indeterminate division with remainder} 
     of $f$ with respect to $(g_1,\ldots,g_k)$ if it satisfies (ID1) and
     (ID2). In any of these cases we call $r$ a \emph{remainder} or a
     \emph{normal form} of $f$ with respect to $(g_1,\ldots,g_k)$.

     If the remainder in a division with remainder of $f$ with respect
     to $(g_1,\ldots,g_k)$ is zero we call the representation of $f$ a
     \emph{standard representation}.

     Finally, if $A=R[\ux]$ then for $u\in S_>$ we call a division with remainder of
     $u\cdot f$ with respect to $(g_1,\ldots,g_k)$ also a \emph{weak
       division with remainder} of $f$ with respect to $(g_1,\ldots,g_k)$, a
     remainder of $u\cdot f$ with respect to $(g_1,\ldots,g_k)$ is
     called a \emph{weak normal form} of $f$ with respect to
     $(g_1,\ldots,g_k)$, and a standard representation of $u\cdot f$ with
     respect to $(g_1,\ldots,g_k)$ is called a \emph{weak standard
       representation} of $f$ with respect to $(g_1,\ldots,g_k)$.
   \end{definition}

   \kurz{
     It is rather obvious to see that (DD2) $\Longleftarrow$
     (SID2)$\Longleftarrow$ (ID2), that (DD1)+(ID2) $\Longleftarrow$
     (ID1), and that the coefficients and the remainder of a division
     satisfying (DD1) and (DD2) is uniquely determined. 
   }

   \lang{
   The following lemma should clarify the relations between the above
   conditions, and it should explain the \emph{determinate} versa the
   \emph{indeterminate}. 

   \begin{lemma}\label{lem:conditions}
     Let $>$ be a $\ut$-local monomial ordering on $\Mon^s(\ut,\ux)$,
     and suppose we have $f,g_1, \ldots,g_k, r\in R[\ux]^s$ and 
     $q_1,\ldots,q_k\in R[\ux]$ such that
     \begin{equation}\label{eq:standrep:2}
       f=q_1\cdot g_1+\ldots+q_k\cdot g_k+r.
     \end{equation}
     Then the following holds true:
     \begin{enumerate}
     \item If \eqref{eq:standrep:2} satisfies (DD2) then it satisfies
       (SID2). 
     \item If \eqref{eq:standrep:2} satisfies (SID2) then it satisfies
       (ID2).
     \item If \eqref{eq:standrep:2} satisfies (DD1) and (ID2) then it
       satisfies (ID1).
     \item Suppose that $g_i\not=0$ for $i=1,\ldots,k$. 
       If $f=q_1'\cdot g_1+\ldots+q_k'\cdot q_k+r'$ is a second
       such representation of $f$ and both satisfy (DD1) and (DD2),
       then $q_1=q_1',\ldots,q_k=q_k'$ and $r=r'$. That is, a
       determinate division with remainder is uniquely determined, if
       it exists.
     \end{enumerate}
   \end{lemma}
   \begin{proof}
     \begin{enumerate}
     \item This is obvious, if we take into account that $0$ has no
       term and therefore, even if $r=0$ it is true that no term of
       $r$ is divisible by any $\lm_>(g_i)$.
     \item This is obvious.
     \item Let $\lm_>(q_j\cdot g_j)=\ut^\alpha\cdot\ux^\beta\cdot
       e_\nu$ be maximal in $\{\lm_>(q_i\cdot 
       g_i)\;|\; i=1,\ldots,k\}$, and suppose that $\lm_>(q_j\cdot
       g_j)>\lm_>(f)$.  Since the
       monomial $\ut^\alpha\cdot\ux^\beta\cdot e_\nu$ does not occur on the left
       hand side of the equality in \eqref{eq:standrep:2} it must
       occur at least in one of the other summands on the right hand
       side in order to cancel. Suppose it occurs in some $q_i\cdot
       g_i$, $i\not=j$, then by our choice necessarily
       \begin{displaymath}
         \lm_>(q_j)\cdot\lm_>(g_j)=\lm_>(q_j\cdot g_j)=\lm_>(q_i\cdot g_i)=\lm_>(q_i)\cdot\lm_>(g_i),
       \end{displaymath}
       in contradiction to (DD1). Thus $\ut^\alpha\cdot\ux^\beta\cdot e_\nu$ must
       be a term in $r$, and again by the maximality in our choice
       necessarily
       \begin{displaymath}
         \lm_>(q_j)\cdot\lm_>(g_j)=\lm_>(q_j\cdot g_j)=\lm_>(r),
       \end{displaymath}
       in contradiction to (ID2). This shows that (ID1) is satisfied.
     \item Obviously the representation
       \begin{displaymath}
         0=f-f=(q_1-q_1')\cdot g_1+\ldots+(q_k-q_k')\cdot g_k+(r-r')
       \end{displaymath}
       still satisfies the conditions (DD1) and (DD2). But then by (a)
       and (b) it satisfies (ID2) and by (c) it satisfies (ID1). This implies
       \begin{displaymath}
         0=\lm_>(0)\geq \lm_>(q_i-q_i')\cdot \lm_>(g_i)\geq 0,
       \end{displaymath}
       and since by our assumption $g_i\not=0$ this implies
       $\lm_>(q_i-q_i')=0$ and hence $q_i=q_i'$. And therefore,
       finally, also $r=r'$.
     \end{enumerate}
   \end{proof}
   }

   We first want to generalise Grauert's Division with Remainder to
   the case of elements in $R[\ux]$ which are homogeneous with
   respect to $\ux$. We therefore introduce this notion in the
   following definition.

   \begin{definition}
     Let
     $f=\sum_{i=1}^s\sum_{|\beta|=0}^d\sum_{\alpha\in\N^m}
     a_{\alpha,\beta,i}\cdot\ut^\alpha\cdot\ux^\beta\cdot e_i
     \in R[\ux]^s$. 
     \begin{enumerate}
     \item We call
       \bmath
         \deg_{\ux}(f):=\max\big\{|\beta|\;\big|\;a_{\alpha,\beta,i}\not=0\big\}
       \emath
       the \emph{$\ux$-degree} of $f$.
     \item $f\in R[\ux]^s$ is called \emph{$\ux$-homogeneous of
         $\ux$-degree $d$} 
       \lang{
         if for any $0\not=\lambda\in K$ we have
       \begin{displaymath}
         f(\ut,\lambda\cdot\ux)=\lambda^d\cdot f,
       \end{displaymath}
       or, equivalently,
       }
       if all terms of $f$
       have the same
       $\ux$-degree $d$. We denote by $R[\ux]_d^s$ the
       $R$-submodule of $R[\ux]^s$ of $\ux$-homogeneous elements.
       Note that by this definition $0$ is $\ux$-homogeneous of degree
       $d$ for all $d\in\N$.
     \item If $>$ is a $\ut$-local monomial ordering on $\Mon^s(\ut,\ux)$
       then we call 
       \begin{displaymath}
         \ecart_>(f):=\deg_{\ux}(f)-\deg_{\ux}\big(\lm_>(f)\big)\geq 0
       \end{displaymath}
       the \emph{ecart} of $f$. It in some sense measures the
       failure of the homogeneity of $f$.
     \end{enumerate}
   \end{definition}
   

   \section{Determinate Division with Remainder in $K[[\ut]][\ux]_d^s$}\label{sec:HDDwR}

   We are now ready to show that for $\ux$-homogeneous elements in
   $R[\ux]$ there exists a determinate division with remainder. We
   follow mainly the proof of Grauert's Division Theorem as given in
   \cite{DS07}. 

   \begin{theorem}[HDDwR]\label{thm:HDDwR}
     Let $f,g_1,\ldots,g_k\in R[\ux]^s$ be $\ux$-homogeneous, then there
     exist uniquely determined $q_1,\ldots,q_k\in R[\ux]$ and $r\in R[\ux]^s$ such that
     \begin{displaymath}
       f=q_1\cdot g_1+\ldots +q_k\cdot g_k+r
     \end{displaymath}
     satisfying
     \kurz{(DD1), (DD2) and}
     \begin{enumerate}
     \lang{
     \item[(DD1)] For $j<i$ no term of $q_i\cdot\lm_>(g_i)$ is divisible
       by $\lm_>(g_j)$,
     \item[(DD2)]
       no term of $r$ is divisible by any $\lm_>(g_i)$, and 
     }
     \item[(DDH)] $q_1,\ldots,q_k,r$ are $\ux$-homogeneous of
       $\ux$-degrees $\deg_{\ux}(q_i)=\deg_{\ux}(f)-\deg_{\ux}\big(\lm_>(g_i)\big)$
       respectively $\deg_{\ux}(r)=\deg_{\ux}(f)$.
     \end{enumerate}
     \lang{
     We call such a representation of $f$ a \emph{homogeneous
       determinate division with remainder}. 
     }
   \end{theorem}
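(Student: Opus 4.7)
The argument separates cleanly into uniqueness and existence. For \emph{uniqueness}, assume $f = \sum_i q_i g_i + r = \sum_i q_i' g_i + r'$ are two representations satisfying (DD1), (DD2), (DDH). Subtracting yields a representation $0 = \sum_i (q_i - q_i') g_i + (r - r')$ which still satisfies (DD1) and (DD2): every surviving term on either side comes from one of the original representations, so the relevant non-divisibility conditions are inherited. The chain $(\text{DD2}) \Rightarrow (\text{ID2})$ and $(\text{DD1}) + (\text{ID2}) \Rightarrow (\text{ID1})$ noted just before the theorem then forces $\lm_>\big((q_i - q_i') g_i\big) \leq \lm_>(0) = 0$ for each $i$ and $\lm_>(r - r') \leq 0$, hence $q_i = q_i'$ (for $g_i \neq 0$) and $r = r'$.

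For \emph{existence}, I would run a Grauert-style iterative construction. Set $f^{(0)} := f$, $q_i^{(0)} := 0$, $r^{(0)} := 0$, and at step $k$ write $\mu_k := \lm_>(f^{(k)})$, $t_k := \lt_>(f^{(k)})$. If some $\lm_>(g_i)$ divides $\mu_k$, let $i_k$ be the least such index, put $m_k := t_k / \lt_>(g_{i_k})$, and update $q_{i_k}^{(k+1)} := q_{i_k}^{(k)} + m_k$ and $f^{(k+1)} := f^{(k)} - m_k g_{i_k}$; otherwise update $r^{(k+1)} := r^{(k)} + t_k$ and $f^{(k+1)} := f^{(k)} - t_k$. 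In either case $\lm_>(f^{(k+1)}) < \mu_k$, and an easy induction shows $f^{(k)}$ remains $\ux$-homogeneous of degree $d := \deg_{\ux}(f)$, forcing $\deg_{\ux}(m_k) = d - \deg_{\ux}(\lm_>(g_{i_k}))$. This gives (DDH); (DD1) follows from the minimality of $i_k$, since $m_k \cdot \lm_>(g_{i_k}) = \mu_k$ cannot be divisible by any $\lm_>(g_j)$ with $j < i_k$; and (DD2) holds because terms pass to $r$ precisely when $\mu_k$ is divisible by no $\lm_>(g_i)$.

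The substantive point is convergence in the $\ut$-adic topology, so that $q_i := \lim_k q_i^{(k)}$ and $r := \lim_k r^{(k)}$ actually define elements of $R[\ux]$ and $R[\ux]^s$, and the defining identity $f = \sum_i q_i^{(k)} g_i + r^{(k)} + f^{(k)}$ passes to the limit. All leading monomials $\mu_k$ lie in $M_d := \{\ut^\alpha \ux^\beta e_i : |\beta| = d,\ 1 \leq i \leq s\}$, and the intersection $M_d \cap \{|\alpha| \leq N\}$ is finite for every $N$ (finitely many pairs $(\beta, i)$ with $|\beta| = d$, and finitely many $\alpha \in \N^m$ with $|\alpha| \leq N$). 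Since $(\mu_k)_k$ is strictly $>$-decreasing, only finitely many $\mu_k$ can lie in this finite set, so $|\alpha(\mu_k)| \to \infty$. Because there are only finitely many $g_i$, so only finitely many possible divisors $\lt_>(g_{i_k})$, the quantities $m_k$ and $t_k$ therefore tend to $0$ in the $\ut$-adic topology, and the partial sums defining $q_i$ and $r$ converge.

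The main obstacle is to rigorously carry out the $\ut$-adic limit—in particular, to show that $f^{(k)} \to 0$ in $R[\ux]^s$, not merely that its leading term does, so that the identity $f = \sum_i q_i g_i + r$ genuinely holds in the limit. The algebraic properties (DD1), (DD2) and (DDH) are otherwise immediate from the construction. Once convergence is secured, combining with the uniqueness argument completes the proof.
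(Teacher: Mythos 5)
Your uniqueness argument is correct and is essentially the paper's.

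The existence argument, however, has a flaw in the construction itself, more serious than the convergence gap you acknowledge at the end. Your recursion removes a \emph{single term} — the leading term of $f^{(k)}$ — at each step. This can stall permanently: a term of $f^{(k)}$ that is lower in the ordering than the leading monomial need never be reached if the successive leading monomials decrease without ever dropping below it. Concretely, take $m=1$, $n=2$, $s=1$, the $\ut$-local lex ordering with $x_1>x_2>1>t$, $g_1=(1-t)x_1$ (so $\lm_>(g_1)=x_1$), and $f=x_1+x_2$. At every step the leading monomial is $t^kx_1$, which divides into $\lm_>(g_1)=x_1$, so one subtracts $t^kg_1$ and obtains $f^{(k+1)}=t^{k+1}x_1+x_2$; the term $x_2$ is never processed. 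In the limit your procedure produces $q_1=1/(1-t)\in R$, $r=0$, but $f-q_1g_1-r=x_2\neq 0$, and indeed $f^{(k)}\to x_2\neq 0$. The paper's recursion is structurally different and immune to this: in each round it performs a \emph{complete} determinate division of $f_{\nu-1}$ by the leading \emph{terms} $\lt_>(g_1),\ldots,\lt_>(g_k)$ (single terms, so this is elementary), so that every term of $f_{\nu-1}$ is assigned to some $q_{i,\nu}$ or to $r_\nu$, and the only carry-over is $f_\nu=-\sum_i q_{i,\nu}\TAIL(g_i)$.

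Even with the recursion corrected, the second gap — the one you name yourself — remains genuine and is not closed by your counting argument. You correctly show that $|\alpha(\lm_>(f^{(k)}))|\to\infty$, but for an arbitrary $\ut$-local ordering this says nothing about the other monomials of $f^{(k)}$: the leading monomial can have arbitrarily high $\ut$-degree while another monomial of $f^{(k)}$ has $\ut$-degree zero (my example above exhibits exactly this). The paper resolves this with a device you do not supply: Lemma~\ref{lem:weightedorder} replaces $>$ by a $\ut$-local \emph{weight} ordering $>_w$ with integer weight vector in $\Z_{<0}^m\times\Z^{n+s}$ that yields the same leading monomials on $g_1,\ldots,g_k$ (hence leaves the recursion unchanged), and Lemma~\ref{lem:convergence} shows that for such a weight ordering a $\ux$-homogeneous sequence of fixed $\ux$-degree with strictly $>_w$-decreasing leading monomials converges $\ut$-adically to zero, because the weight bounds \emph{all} monomials of each $f_\nu$ uniformly, not just the leading one. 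Without passing to a weight ordering, the conclusion $f_\nu\to 0$ cannot be drawn.
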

   \begin{proof}
     \kurz{ The result is obvious if the $g_i$ are terms, and we will
       reduce the general case to this one.}
     \lang{
     We first consider the case that $g_1,\ldots,g_k\in\Mon^s(\ut,\ux)$
     are monomials. Then  define recursively
     for $i=1,\ldots,k$
     \begin{displaymath}
       q_i:=\frac{h_i}{g_i}\in R[\ux],
     \end{displaymath}
     where $h_i$ is the sum of all terms of
     $f-\sum_{j=1}^{i-1}q_j\cdot g_j$ which
     are divisible by $g_i$, $i=1,\ldots,k$. Thus with $r:=f-\sum_{i=1}^k q_i\cdot g_i$
     \begin{displaymath}
       f=q_1\cdot g_1+\ldots +q_k\cdot g_k+r
     \end{displaymath}
     obviously satisfies the above conditions (DD1), (DD2) and (DDH).
     
     This result generalises immediately to the case where the $g_i$
     are terms, i.e.\ constant multiples of monomials.     

     Let us now consider the general case.} We set $f_0=f$ and for
     $\nu>0$ we define recursively 
     \begin{displaymath}
       f_\nu=f_{\nu-1}-\sum_{i=1}^k q_{i,\nu}\cdot
       g_i-r_\nu=\sum_{i=1}^k q_{i,\nu}\cdot(\big(-\TAIL(g_i)\big),
     \end{displaymath}
     where the $q_{i,\nu}\in R[\ux]$ and $r_\nu\in R[\ux]^s$ are such that
     \begin{equation}\label{eq:HDDwR:1}
       f_{\nu-1}=q_{1,\nu}\cdot\lt_>(g_1)+\ldots+q_{k,\nu}\cdot\lt_>(g_k)+r_\nu
     \end{equation}
     satisfies (DD1), (DD2) and (DDH). Note that such a representation
     of $f_{\nu-1}$ exists since the $\lt_>(g_i)$ are terms.

     We want to show that $f_\nu$, $q_{i,\nu}$ and $r_\nu$ all converge to zero in the
     $\langle t_1,\ldots,t_m\rangle$-adic topology, that is that for each $N\geq 0$
     there exists a $\mu_N\geq 0$ such that for all $\nu\geq \mu_N$
     \begin{displaymath}
       f_\nu,r_\nu\in\langle t_1,\ldots,t_m\rangle^N\cdot R[\ux]^s 
       \;\;\;\mbox{ resp. }\;\;\;
       q_{i,\nu}\in\langle t_1,\ldots,t_m\rangle^N.
     \end{displaymath}
     
     By Lemma \ref{lem:weightedorder} there is $\ut$-local weight
     ordering $>_w$ such that 
     \begin{displaymath}
       \lm_>(g_i)=\lm_{>_w}(g_i)\;\;\;\mbox{ for all }i=1,\ldots,k. 
     \end{displaymath}
     If we replace in the above construction $>$ by $>_w$,
     we still get the same sequences $(f_\nu)_{\nu=0}^\infty$,
     $(q_{i,\nu})_{\nu=1}^\infty$ and $(r_\nu)_{\nu=1}^\infty$, since for the
     construction of $q_{i,\nu}$ and $r_\nu$ only the leading
     monomials of the $g_j$ are used. In particular,
     \eqref{eq:HDDwR:1} will satisfy (DD1), (DD2) and (DDH) with
     respect to $>_w$.
     Due to (DDH) $f_\nu$ is again $\ux$-homogeneous of
     $\ux$-degree equal to that of $f_{\nu-1}$, and
     since (DD1) and (DD2) imply (ID1) \lang{by Lemma \ref{lem:conditions} }
     we have
     \begin{multline*}
       \lm_{>_w}(f_{\nu-1})\geq
       \max\{\lm_{>_w}(q_{i,\nu})\cdot\lm_{>_w}(g_i)\;|\;i=1,\ldots,k\}\\
       >\max\big\{\lm_{>_w}(q_{i,\nu})\cdot\lm_{>_w}\big(-\TAIL(g_i)\big)\;\big|\; i=1,\ldots,k\big\}
       \geq\lm_{>_w}(f_\nu).
     \end{multline*}
     It follows from Lemma \ref{lem:convergence} that $f_\nu$
     converges to zero in the $\langle t_1,\ldots,t_m\rangle$-adic
     topology, i.e.\ for given $N$ there is a $\mu_N$ such that
     \begin{displaymath}
       f_\nu\in\langle t_1,\ldots,t_m\rangle^N\cdot R[\ux]^s \;\;\;\mbox{ for all }\nu\geq\mu.
     \end{displaymath}
     But then, by construction for $\nu>\mu_N$
     \begin{displaymath}
       r_\nu\in\langle t_1,\ldots,t_m\rangle^N\cdot R[\ux]^s
     \end{displaymath}
     and
     \begin{displaymath}
       q_{i,\nu}\in\langle t_1,\ldots,t_m\rangle^{N-d_i},
     \end{displaymath}
     where
     $d_i=\deg\big(\lm_>(g_i)\big)-\deg_{\ux}\big(\lm_>(g_i)\big)$ is
     independent of $\nu$.
     Thus both, $r_\nu$ and $q_{i,\nu}$, converge as well to zero in
     the $\langle t_1,\ldots,t_m\rangle$-adic 
     topology.

     But then 
     \begin{displaymath}
       q_i:=\sum_{\nu=1}^\infty q_{i,\nu}\in R[\ux] \;\;\;\mbox{ and
       }\;\;\;
       r:=\sum_{\nu=1}^\infty r_\nu\in R[\ux]^s
     \end{displaymath}
     are $\ux$-homogeneous of $\ux$-degrees
     $\deg_{\ux}(q_i)=\deg_{\ux}(f)-\deg_{\ux}\big(\lm_>(g_i)\big)$ respectively
     $\deg_{\ux}(r)=\deg_{\ux}(f)$ unless they are zero, and
     \begin{displaymath}
       f=q_1\cdot g_1+\ldots+q_k\cdot g_k+r
     \end{displaymath}
     satisfies (DD1), (DD2) and (DDH).

     The uniqueness of the representation \kurz{is obvious.}
     \lang{follows from Lemma \ref{lem:conditions}.}
   \end{proof}

   The following lemmata contain technical results used throughout the proof
   of the previous theorem.

   \lang{
   \begin{lemma}\label{lem:graph}
     Let $\Gamma$ be a directed graph with vertex set $V$ and edge set
     $E\subset V\times V$. If for every vertex the number of
     outward pointing edges coincides with the number of inward
     pointing edges, then $F$ is a disjoint union of cycles.
   \end{lemma}
   \begin{proof}
     We do the proof by induction on the number $\# E$ of edges. If
     there is no edge, then the statement holds by default, and we may
     thus assume that $\# E>0$. Choose any vertex, say $v_0\in V$,
     which has an outward pointing edge, say $(v_0,v_1)\in E$. By the
     assumption $v_1$ also has an outward pointing edge, say
     $(v_1,v_2)$, and we can inductively proceed to construct a
     sequence of vertices $(v_\nu)_{\nu\in\N}$ with
     $(v_\nu,v_{\nu+1})\in E$. Since the number of vertices is finite,
     there is minimal $\mu\geq 0$ such that 
     \begin{displaymath}
       v_\mu=v_\nu \;\;\;\mbox{ for some }\;\;\;0\leq \nu<\mu.
     \end{displaymath}
     But then 
     \begin{displaymath}
       C=\big((v_\nu,v_{\nu+1}),\ldots,(v_{\mu-1},v_\mu)\big)
     \end{displaymath}
     is a cycle, and if we remove $C$ from $E$ then the remaining
     graph still satisfies that for each vertex the number of inward
     pointing and outward pointing edges coincides. Thus by induction
     it is a disjoint union of cycles and then so is $\Gamma$.
   \end{proof}

   \begin{lemma}\label{lem:bayer}
     We use the notation $\uz=(\ut,\ux)$ and
     $\E^s=\{e_1,\ldots,e_s\}$.

     Let $>$ be a monomial ordering on $\Mon^s(\uz)$ and consider the
     set
     \begin{displaymath}
       \Delta_>:=\big\{(\gamma,e_i)-(\gamma',e_j)\in\Z^{m+n+s}\;|\;
       \uz^\gamma\cdot e_i>\uz^{\gamma'}\cdot e_j\big\}.
     \end{displaymath}
     Then:
     \begin{displaymath}
       (0,\ldots,0)\not\in
       \left\{\sum_{i=1}^kn_i\cdot\delta_i\;\bigg|\;\delta_i\in \Delta_>,n_i\in\Z_{>0},k>0\right\}.
     \end{displaymath}
   \end{lemma}
   \begin{proof}
     For the convenience of the reader we reproduce here the proof
     given in Dave Bayer's thesis \cite[(1.7)]{Bay82}.
     Suppose there exist, not necessarily pairwise different, 
     \begin{displaymath}
       \delta_i=(\gamma_{i,2},e_{j_{i,2}})-(\gamma_{i,1},e_{j_{i,1}})\in
       \Delta_>, \;\;\;i=1,\ldots,k,
     \end{displaymath}
     with $\gamma_{i,1},\gamma_{i,2}\in\N^{m+n}$,
     $j_{i,1},j_{i,2}\in\{1,\ldots,s\}$ and
     \begin{displaymath}
       \uz^{\gamma_{i,2}}\cdot e_{j_{i,2}}\;>\;\uz^{\gamma_{i,1}}\cdot e_{j_{i,1}},
     \end{displaymath}
     such that
     \begin{displaymath}
       \left(\sum_{i=1}^k\gamma_{i,2}-\gamma_{i,1},\sum_{i=1}^ke_{j_{i,2}}-e_{j_{i,1}}\right)
       =\sum_{i=1}^k\delta_i=(0,\ldots,0).  
     \end{displaymath}
     
     It is our first aim to show that we may assume that
     $e_{j_{i,1}}=e_{j_{i,2}}$ for all $i=1,\ldots,k$.

     For this we define a directed graph $\Gamma$ whose vertex set is
     $\E^s$ and such that for each $i=1,\ldots,k$ there is an edge from $e_{j_{i,1}}$
     to $e_{j_{i,2}}$. Since by assumption
     $\sum_{i=1}^k e_{j_{i,2}}-e_{j_{i,1}}=(0,\ldots,0)$,  for each vertex $e_\nu$
     the number of edges pointing towards $e_\nu$ is equal to the number
     of edges pointing away from $e_i$. Thus by Lemma \ref{lem:graph} $\Gamma$
     is a disjoint union of cycles, each given by a subset of
     $\{\delta_1,\ldots,\delta_k\}$. Let
     $\{\delta_{i_1},\ldots,\delta_{i_\mu}\}$ represent such a cycle
     with 
     \begin{equation}\label{eq:bayer:1}
       j_{i_1,2}=j_{i_2,1},\ldots,j_{i_{\mu-1},2}=j_{i_\mu,1},j_{i_\mu,2}=j_{i_1,1}.
     \end{equation}
     Set $\gamma=\gamma_{i_1,1}+\ldots+\gamma_{i_\mu,1}\in\N^{m+n}$,
     $\varepsilon_0:=(\gamma,e_{j_{i_1,1}})$ and recursively 
     \begin{displaymath}
       \varepsilon_\nu:=\varepsilon_{\nu-1}+\delta_{i_\nu}=
       \left(\sum_{\kappa=1}^\nu(\gamma_{\kappa,2}-\gamma_{\kappa,1})+\gamma,e_{j_{i_\nu,2}}\right)
       \in\N^{m+n}\times \E^s
     \end{displaymath}
     for $\nu=1,\ldots,\mu$. By assumption
     \begin{displaymath}
       \uz^{\gamma_{i_\nu,2}}\cdot e_{j_{i_\nu,2}}\;>\;
       \uz^{\gamma_{i_\nu,1}}\cdot e_{j_{i_\nu,1}}
     \end{displaymath}
     and multiplying both sides with 
     $\uz^{\sum_{\kappa=1}^{\nu-1}(\gamma_{i_\kappa,2}-\gamma_{i_\kappa,1})
       +\gamma-\gamma_{i_\nu,1}}\in\Mon(\uz)$ we get
     \begin{displaymath}
       \uz^{\sum_{\kappa=1}^\nu(\gamma_{i_\kappa,2}-\gamma_{i_\kappa,1})+\gamma}\cdot e_{j_{i_\nu,2}}
       \;>\;
       \uz^{\sum_{\kappa=1}^{\nu-1}(\gamma_{i_\kappa,2}-\gamma_{i_\kappa,1})+\gamma}\cdot e_{j_{i_\nu,1}}
     \end{displaymath}
     for $\nu=1,\ldots,\mu$. Transitivity of the monomial ordering and
     \eqref{eq:bayer:1} imply then
     \begin{displaymath}
       \uz^{\sum_{\kappa=1}^\mu \gamma_{i_\kappa,2}}\cdot e_{j_{i_\mu,2}}=
       \uz^{\sum_{\kappa=1}^\mu(\gamma_{i_\kappa,2}-\gamma_{i_\kappa,1})+\gamma}\cdot e_{j_{i_\mu,2}}
       \;>\;
       \uz^{\gamma}\cdot e_{j_{i_1,1}}=\uz^\gamma\cdot e_{j_{i_\mu,2}}.
     \end{displaymath}
     But then 
     \begin{displaymath}
       \sum_{\nu=1}^\mu \delta_{i_\nu}=\varepsilon_\mu-\varepsilon_0\in\Delta_>,
     \end{displaymath}
     and we may replace $\{\delta_{i_1},\ldots,\delta_{i_\mu}\}$ by
     its sum which is an element in $\Delta_>$ such that the last $s$
     components are all zero. Doing this with each of the cycles whose
     disjoint union is $\Gamma$ we may assume that from the beginning
     $e_{j_{i,1}}=e_{j_{i,2}}$ for all $i=1,\ldots,k$.

     If $e_{j_{i,1}}=e_{j_{i,2}}$ then $\uz^{\gamma_{i,2}}\cdot e_{j_{i,2}}>
     \uz^{\gamma_{i,1}}\cdot e_{j_{i,1}}$ implies
     \begin{displaymath}
       \uz^{\gamma_{i,2}}\;>\;\uz^{\gamma_{i,1}}
     \end{displaymath}
     with respect to the monomial ordering which $>$ induces on
     $\Mon(\uz)$. The compatibility of $>$ with respect to the
     semi group structure of $\Mon(\uz)$ then leads to
     \begin{displaymath}
       \uz^{\sum_{i=1}^k\gamma_{i,2}}\;>\;\uz^{\sum_{i=1}^k\gamma_{i,1}},
     \end{displaymath}
     while $\sum_{i=1}^k\delta_i=0$ implies
     \begin{displaymath}
       \sum_{i=1}^k\gamma_{i,2}=\sum_{i=1}^k\gamma_{i,1},
     \end{displaymath}
     which gives the desired contradiction.
   \end{proof}
   }

   \begin{lemma}\label{lem:weightvector}
     If $>$ is a monomial ordering on $\Mon^s(\uz)$ with
     $\uz=(\ut,\ux)$, and  $M\subset \Mon^s(\uz)$ is finite, then
     there exists $w\in\Z^{m+n+s}$ with
     \begin{displaymath}
       w_i<0, \;\;\mbox{ if }z_i<1,\;\;\;\mbox{ and }\;\;\;
       w_i>0, \;\;\mbox{ if }z_i>1,
     \end{displaymath}
     such that for $\uz^\gamma\cdot e_i,
     \uz^{\gamma'}\cdot e_j\in M$ we have 
     \begin{displaymath}
       \uz^\gamma\cdot e_i\;>\;
       \uz^{\gamma'}\cdot e_j
       \;\;\;\Longleftrightarrow\;\;\;
       w\cdot(\gamma,e_i)\;>\;w\cdot(\gamma',e_j).
     \end{displaymath}
     In particular, if $>$ is $\ut$-local then every $\ut$-local weight ordering on
     $\Mon^s(\ut,\ux)$ with weight vector $w$ coincides on $M$ with $>$.          
   \end{lemma}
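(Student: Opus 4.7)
My plan is to deduce the existence of $w$ from Lemma \ref{lem:bayer} by a Gordan-type separation argument. The idea is to assemble the finitely many inequalities that $w$ has to reflect into a finite set of integer vectors inside $\Delta_>$, observe that by Lemma \ref{lem:bayer} no strictly positive integer combination of these vectors is zero, and then invoke a separating hyperplane to produce $w$.

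First I form the finite set $\Delta_M\subset\Delta_>$ consisting of all differences $(\gamma,e_i)-(\gamma',e_j)$ arising from pairs $\uz^\gamma\cdot e_i,\uz^{\gamma'}\cdot e_j\in M$ with $\uz^\gamma\cdot e_i>\uz^{\gamma'}\cdot e_j$. To encode the sign requirements, I augment this set with the vectors $\epsilon_\nu\cdot(e_\nu,0)\in\Z^{m+n+s}$ for $\nu=1,\ldots,m+n$, where $\epsilon_\nu=+1$ if $z_\nu>1$ and $\epsilon_\nu=-1$ if $z_\nu<1$; by the compatibility of $>$ with the semi-group structure, each such vector lies in $\Delta_>$ (it comes from comparing $z_\nu\cdot e_1$ with $e_1$). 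Call the enlarged finite set $\Delta'\subset\Delta_>$.

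By Lemma \ref{lem:bayer}, $0$ is not a strictly positive integer combination of elements of $\Delta'$. By Gordan's theorem (equivalently Stiemke's lemma, a direct consequence of LP duality), this is precisely the condition that guarantees the existence of some $w_0\in\R^{m+n+s}$ with $w_0\cdot\delta>0$ for all $\delta\in\Delta'$. Since $\Delta'$ is finite, such vectors form a non-empty open cone in $\R^{m+n+s}$, so I can perturb $w_0$ to lie in $\Q^{m+n+s}$ and then clear denominators to obtain $w\in\Z^{m+n+s}$ with the same separating property. The required sign conditions on the first $m+n$ entries of $w$ now hold by construction.

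Finally I check the biconditional on $M$: if $\uz^\gamma\cdot e_i>\uz^{\gamma'}\cdot e_j$ with both monomials in $M$ then $(\gamma,e_i)-(\gamma',e_j)\in\Delta_M\subset\Delta'$, hence $w\cdot(\gamma,e_i)>w\cdot(\gamma',e_j)$; conversely, a strict $w$-inequality forces $(\gamma,e_i)\neq(\gamma',e_j)$, and since $>$ is total, the only possibility left is $\uz^\gamma\cdot e_i>\uz^{\gamma'}\cdot e_j$ (the reverse inequality would, by the forward direction, give the opposite $w$-inequality). The \emph{in particular} clause is then immediate: a $\ut$-local weight ordering with weight vector $w$ compares $w$-values first, so on $M$ its strict inequalities coincide with the strict $w$-inequalities, which coincide with those of $>$. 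The only non-trivial step is the appeal to Gordan/Stiemke, for which Lemma \ref{lem:bayer} is exactly the required input.
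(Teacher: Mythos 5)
Your proof is correct and follows essentially the same route as the paper: form a finite subset of $\Delta_>$ that encodes both the comparisons within $M$ and the required sign conditions, invoke Lemma \ref{lem:bayer} to rule out a vanishing nonnegative combination, and extract an integral separating $w$ (the paper phrases this via a separating hyperplane for the rational convex hull and cites Bayer's thesis, while you invoke Gordan/Stiemke, which is the same content).
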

   \begin{proof}
     \kurz{
       The proof goes analogous to \cite[Lemma 1.2.11]{GP02}, using
       \cite[(1.7)]{Bay82} (for this note that in the latter the
       requirement that $>$ is a well-ordering is superfluous).
     }
     \lang{
     We set $M'=M\cup\{e_1,z_1\cdot e_1,\ldots,z_{m+n}\cdot
     e_1\}$, and we consider the finite subset
     \begin{displaymath}
       \Delta_M:=\big\{(\gamma,e_i)-(\gamma',e_j)\;|\;\uz^\gamma\cdot
       e_i, \uz^{\gamma'}\cdot e_j\in M', \uz^\gamma\cdot e_i>
       \uz^{\gamma'}\cdot e_j\big\}\subset\Delta_>
     \end{displaymath}
     of the set $\Delta_>$ from Lemma \ref{lem:bayer}.

     Lemma \ref{lem:bayer} implies that the convex hull of $\Delta_M$ over $\Q$,
     \begin{displaymath}
       \conv_\Q(\Delta_M):=
       \left\{\sum_{\delta\in\Delta_M}\lambda_\delta\cdot\delta
         \;\bigg|\;\lambda_\delta\in\Q_{\geq 0},\sum_{\delta\in\Delta_M}\lambda_\delta=1
       \right\},
     \end{displaymath}
     does not contain the zero vector, since multiplying such a vanishing convex
     combination with the greatest common denominator of the
     coefficients would lead to a positive integer combination as
     excluded in Lemma \ref{lem:bayer}.

     However, a convex set like $\conv_\Q(\Delta_M)$ which does not
     contain zero lies in the positive half 
     space defined by a linear form
     \begin{equation}\label{eq:weightvector:1}
       l_w:\Q^{m+n+s}\longrightarrow\Q:v\mapsto w\cdot
       v=\sum_{i=1}^{m+n+s}w_i\cdot v_i
     \end{equation}
     given by $w\in\Q^{m+n+s}$ (see e.g.\ \cite[Thm\ 2.10]{Val76}), i.e.
     \begin{displaymath}
       \delta\in\Delta_M\;\;\;\Longleftrightarrow\;\;\;w\cdot\delta>0.
     \end{displaymath}
     Multiplying with the common denominator of its entries we may assume that
     $w\in\Z^{m+n+s}$.

     Moreover, if for $i=1,\ldots,m+n$ we set $\delta_i:=z_i\cdot
     e_1-e_1$, then by \eqref{eq:weightvector:1}
     \begin{displaymath}
       z_i<1 
       \;\;\;\Longleftrightarrow\;\;\;
       -\delta_i\in\Delta_M
       \;\;\;\Longleftrightarrow\;\;\;
       w_i=\delta_i.w_i<0
     \end{displaymath}
     and 
     \begin{displaymath}
       z_i>1 
       \;\;\;\Longleftrightarrow\;\;\;
       \delta_i\in\Delta_M
       \;\;\;\Longleftrightarrow\;\;\;
       w_i=\delta_i.w_i>0.
     \end{displaymath}     
     }
   \end{proof}

   \begin{lemma}\label{lem:weightedorder}
     Let $>$ be a $\ut$-local ordering on $\Mon^s(\ut,\ux)$ and
     let $g_1,\ldots,g_k\in R[\ux]^s$ be $\ux$-homogeneous (not
     necessarily of the same degree), then there is a $w\in
     \Z_{<0}^m\times\Z^{n+s}$ such that any $\ut$-local weight
     ordering with weight vector $w$, say $>_w$, induces the same
     leading monomials as $>$ on $g_1,\ldots,g_k$, i.e.
     \begin{displaymath}
       \lm_>(g_i)=\lm_{>_w}(g_i)\;\;\;\mbox{ for all }i=1,\ldots,k.
     \end{displaymath}
   \end{lemma}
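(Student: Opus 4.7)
The plan is to reduce to Lemma \ref{lem:weightvector}, which produces an integer weight vector from only a \emph{finite} collection of monomial comparisons. The main obstacle is that each $g_i$ has potentially infinite support, since its coefficients are power series in $\ut$; one must therefore first single out a finite set of ``essential'' monomials that already determines the leading monomials $\lm_>(g_i)$.

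Writing each $\ux$-homogeneous $g_i$ as $g_i=\sum_{(\beta,j)\in B_i}c_{i,\beta,j}(\ut)\cdot\ux^\beta\cdot e_j$ with $B_i$ finite and $c_{i,\beta,j}\in R$, I would first establish the following key fact: the leading exponent $\alpha_{i,\beta,j}^*$ of each power series $c_{i,\beta,j}$ (under the restriction of $>$ to $\Mon(\ut)$) must be componentwise minimal in the support of $c_{i,\beta,j}$. Indeed, if some $\alpha\neq\alpha_{i,\beta,j}^*$ in the support satisfied $\alpha\leq\alpha_{i,\beta,j}^*$ componentwise, then $\ut$-locality would give $\ut^{\alpha_{i,\beta,j}^*-\alpha}<1$, forcing $\ut^{\alpha_{i,\beta,j}^*}<\ut^\alpha$ and contradicting maximality. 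By Dickson's lemma the set $M_{i,\beta,j}$ of \emph{all} componentwise minimal elements of $\mathrm{supp}(c_{i,\beta,j})$ is therefore finite.

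I would then apply Lemma \ref{lem:weightvector} to the finite module-monomial set
\[
M \;=\; \big\{\,\ut^\alpha\cdot\ux^\beta\cdot e_j\;:\;1\leq i\leq k,\ (\beta,j)\in B_i,\ \alpha\in M_{i,\beta,j}\,\big\},
\]
which contains each $\lm_>(g_i)$ by construction, to obtain a $w\in\Z^{m+n+s}$ with $w_1,\ldots,w_m<0$ (by $\ut$-locality) whose induced order agrees with $>$ on $M$.

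Finally I would verify that $\lm_{>_w}(g_i)=\lm_>(g_i)$ for every $\ut$-local weight ordering $>_w$ with weight vector $w$. This amounts to checking the \emph{strict} inequality $w\cdot\lm_>(g_i)>w\cdot m$ for every other monomial $m=\ut^\alpha\cdot\ux^\beta\cdot e_j$ appearing in $g_i$. If $\alpha\in M_{i,\beta,j}$ this is immediate from the agreement of the $w$-order with $>$ on $M$. Otherwise $\alpha$ componentwise dominates some $\alpha'\in M_{i,\beta,j}$ with $\alpha\neq\alpha'$, and setting $m'=\ut^{\alpha'}\cdot\ux^\beta\cdot e_j\in M$ the strict negativity of $w_1,\ldots,w_m$ yields $w\cdot m<w\cdot m'\leq w\cdot\lm_>(g_i)$, closing the argument.
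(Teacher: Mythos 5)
Your proof is correct and takes essentially the same route as the paper's: both apply Dickson's lemma to extract a finite subset $M$ of monomials of the $g_i$, invoke Lemma~\ref{lem:weightvector} on $M$, and then use $\ut$-locality (negativity of $w_1,\ldots,w_m$) to dominate the infinitely many remaining tail monomials. The paper applies Dickson to the monomial ideal $\langle\mathcal{M}_{\TAIL(g_i)}\rangle$ and invokes $\ux$-homogeneity to force matching $\ux$-parts of divisors, whereas you stratify by $(\beta,j)$ first and apply Dickson to each $\ut$-coefficient $c_{i,\beta,j}\in R$ separately, which accomplishes the same thing with the homogeneity hypothesis absorbed into the bookkeeping.
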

   \begin{proof}
     Consider the monomial ideals $I_i=\langle \mathcal{M}_{\TAIL(g_i)}\rangle$ in
     $K[\ut,\ux]$ generated by all monomials of $\TAIL(g_i)$,
     $i=1,\ldots,k$. By Dickson's Lemma (see e.g.\ \cite[Lemma 1.2.6]{GP02}) $I_i$ is generated by
     a finite subset, say $B_i\subset \mathcal{M}_{\TAIL(g_i)}$, of
     the monomials of 
     $\TAIL(g_i)$. If we now set 
     \begin{displaymath}
       M=B_1\cup\ldots\cup B_k\cup\{\lm_>(g_1),\ldots,\lm_>(g_k)\},
     \end{displaymath}
     then by Lemma \ref{lem:weightvector} there is
     $w\in\Z_{<0}^m\times\Z^{n+s}$ such that any $\ut$-local weight
     ordering, say $>_w$, with weight vector $w$ coincides on $M$ with
     $>$. Let now $\ut^\alpha\cdot\ux^\beta\cdot e_\nu$ 
     be any monomial occurring in $\TAIL(g_i)$. Then there is a
     monomial $\ut^{\alpha'}\cdot\ux^{\beta'}\cdot e_\mu\in B_i$ such that 
     \begin{displaymath}
       \ut^{\alpha'}\cdot\ux^{\beta'}\cdot
       e_\mu\;\big|\;\ut^\alpha\cdot\ux^\beta\cdot e_\nu, 
     \end{displaymath}
     which in particular implies that $e_\nu=e_\mu$.
     Since $g_i$ is $\ux$-homogeneous it follows first that
     $|\beta|=|\beta'|$ and thus that $\beta=\beta'$. Moreover,
     since  $>_w$ is $\ut$-local it
     follows that $\ut^{\alpha'}\geq_w\ut^\alpha$ and thus that
     \begin{displaymath}
       \ut^{\alpha'}\cdot\ux^{\beta'}\cdot
       e_\mu\geq_w\ut^\alpha\cdot\ux^\beta\cdot e_\nu.
     \end{displaymath}
     But since $>$ and $>_w$ coincide on $\{\lm_>(g_i)\}\cup
     B_i\subset M$ we necessarily have that
     \begin{displaymath}
       \lm_>(g_i)\;>_w\;\ut^{\alpha'}\cdot\ux^{\beta'}\cdot e_\mu
       \geq_w\ut^\alpha\cdot\ux^\beta\cdot e_\nu,
     \end{displaymath}
     and hence $\lm_{>_w}(g_i)=\lm_>(g_i)$.
   \end{proof}

   \begin{lemma}\label{lem:convergence}
     Let $>$ be a $\ut$-local weight ordering on
     $\Mon^s(\ut,\ux)$ with weight vector $w\in\Z_{<0}^m\times\Z^{n+s}$, and let
     $(f_\nu)_{\nu\in\N}$ be a sequence of $\ux$-homogeneous elements
     of fixed $\ux$-degree $d$ in $R[\ux]^s$ such that
     \begin{displaymath}
       \lm_>(f_\nu)>\lm_>(f_{\nu+1})\;\;\;\mbox{ for all }\;\nu\in \N.
     \end{displaymath}
     Then $f_\nu$
     converges to zero in the $\langle t_1,\ldots,t_m\rangle$-adic topology, i.e.\ 
     \begin{displaymath}
       \forall\;N\geq 0\;\exists\;\mu_N\geq 0\;:\;\forall\;\nu\geq \mu_N\;
       \mbox{ we have }\;f_\nu\in\langle t_1,\ldots,t_m\rangle^N\cdot R[\ux]^s.
     \end{displaymath}
     In particular, the element $\sum_{\nu=0}^\infty f_\nu\in R[\ux]_d^s$
     exists.
   \end{lemma}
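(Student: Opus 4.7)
The plan is to first show that the weights $W_\nu:=w\cdot\lm_>(f_\nu)$ tend to $-\infty$, and then to exploit the strict negativity of the first $m$ coordinates of $w$ to force the $\ut$-degrees of all terms of $f_\nu$ to infinity uniformly.

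For any term $\ut^{\alpha'}\ux^{\beta'}e_j$ of $f_\nu$, we have $\ut^{\alpha'}\ux^{\beta'}e_j\leq\lm_>(f_\nu)$; the contrapositive of the defining property of a $\ut$-local weight ordering then yields $w\cdot(\alpha',\beta',e_j)\leq W_\nu$. Since $f_\nu$ is $\ux$-homogeneous of $\ux$-degree $d$, the pair $(\beta',j)$ ranges over the finite set $\{(\beta',j)\,:\,|\beta'|=d,\,1\leq j\leq s\}$, so the contribution $w_{m+1}\beta'_1+\ldots+w_{m+n}\beta'_n+w_{m+n+j}$ is bounded in absolute value by some constant $M$. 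Setting $w_{\min}:=\min\{w_1,\ldots,w_m\}<0$, I obtain
\[
w_{\min}\cdot|\alpha'|\;\leq\;w_1\alpha'_1+\ldots+w_m\alpha'_m\;\leq\;W_\nu+M,
\]
and dividing by the negative number $w_{\min}$ yields the uniform lower bound $|\alpha'|\geq (W_\nu+M)/w_{\min}$. Once $W_\nu\to-\infty$ is established, this tends to $+\infty$ uniformly in the terms of $f_\nu$, which gives $f_\nu\in\langle t_1,\ldots,t_m\rangle^N\cdot R[\ux]^s$ eventually, for any prescribed $N$.

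The main obstacle is thus to show $W_\nu\to-\infty$. Since the $\lm_>(f_\nu)$ are pairwise distinct by strict decrease, I would argue by contradiction: if some subsequence $(W_{\nu_k})_k$ were bounded below by $W_0$, writing $\lm_>(f_{\nu_k})=\ut^{\alpha^{(k)}}\ux^{\beta^{(k)}}e_{i_k}$ and using again that only finitely many pairs $(\beta^{(k)},i_k)$ are possible, the inequality $w_1\alpha^{(k)}_1+\ldots+w_m\alpha^{(k)}_m\geq W_0-M$ would hold along the subsequence. Since every $w_i$ with $i\leq m$ is strictly negative and every $\alpha^{(k)}_i\geq 0$, this pins each coordinate $\alpha^{(k)}_i$ to a bounded range of $\N$; hence the monomials $\lm_>(f_{\nu_k})$ would belong to a finite subset of $\Mon^s(\ut,\ux)$, contradicting their pairwise distinctness.

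For the final assertion, $R[\ux]_d^s$ is a free $R$-module of finite rank, with basis $\{\ux^\beta\cdot e_i\,:\,|\beta|=d,\,1\leq i\leq s\}$, and is therefore complete in the $\ut$-adic topology. By what has been shown, the sequence of partial sums $\sum_{\nu=0}^N f_\nu$ is $\ut$-adically Cauchy in $R[\ux]_d^s$, so the series $\sum_{\nu=0}^\infty f_\nu$ converges to a well-defined element of $R[\ux]_d^s$.
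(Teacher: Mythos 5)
Your proof is correct and follows essentially the same path as the paper's: both arguments hinge on the contrapositive of the weight-ordering condition and on the finiteness of the set of monomials of fixed $\ux$-degree whose $w$-weight is bounded below (which needs $w_1,\ldots,w_m<0$). The paper packages this by directly introducing the finite sets $M_k=\{\ut^\alpha\ux^\beta e_i\mid w\cdot(\alpha,\beta,e_i)>-k,\ |\beta|=d\}$ and reading off the conclusion, whereas you first prove $W_\nu\to-\infty$ by a contradiction argument and then push the bound onto all terms; these are two ways of organizing the same estimate. Your treatment of the convergence of $\sum_\nu f_\nu$ via completeness of the finite free $R$-module $R[\ux]_d^s$ is a clean equivalent of the paper's coefficient-by-coefficient argument.
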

   \begin{proof}
     Since $w_1,\ldots,w_m<0$ the set of monomials
     \begin{displaymath}
       M_k=\big\{\ut^\alpha\cdot \ux^\beta\;\big|\;w\cdot(\alpha,\beta,e_i)> -k,|\beta|=d\big\}.
     \end{displaymath}
     is finite for a any fixed $k\in\N$. 

     Let $N\geq 0$ be fixed, set
     $\tau=\max\{|w_1|,\ldots,|w_{m+n+s}|\}$ and $k:=(N+nd+1)\cdot \tau$, then for
     any monomial $\ut^\alpha\cdot \ux^\beta\cdot e_j$ of $\ux$-degree $d$ 
     \begin{equation}\label{eq:convergence:1}
       \ut^\alpha\cdot \ux^\beta\cdot e_j\not\in M_k\;\;\;\Longrightarrow
       \;\;\;
       \ut^\alpha\cdot \ux^\beta\cdot e_j\in \langle
       t_1,\ldots,t_m\rangle^N\cdot R[\ux]^s,
     \end{equation}
     since
     \begin{displaymath}
       \sum_{i=1}^m\alpha_i\cdot w_i\leq
       -k-\sum_{i=1}^n\beta_i\cdot w_{m+i}
       -w_{m+n+j}
       \leq -k+(nd+1)\cdot\tau
     \end{displaymath}
     and thus
     \begin{displaymath}
       |\alpha|=\sum_{i=1}^m\alpha_i\geq
       \sum_{i=1}^m\alpha_i\cdot\frac{-w_i}{\tau}\geq
       \frac{k}{\tau}-nd-1=N.
     \end{displaymath}
     Moreover, since $M_k$ is finite and the $\lm_>(f_\nu)$ are
     pairwise different there are only finitely many
     $\nu$ such that $\lm_>(f_\nu)\in M_k$. Let $\mu$ be maximal among those
     $\nu$, then by \eqref{eq:convergence:1}
     \begin{displaymath}
       \lm_>(f_\nu)\in\langle t_1,\ldots,t_m\rangle^N\cdot R[\ux]^s\;\;\mbox{ for all }\;\; \nu>\mu.
     \end{displaymath}
     But since $>$ is a $\ut$-local weight ordering we have that
     $\lm_>(f_\nu)\not\in M_k$ implies that no monomial of $f_\nu$ is in
     $M_k$, and thus $f_\nu\in \langle t_1,\ldots,t_m\rangle^N\cdot
     R[\ux]^s$ for all $\nu>\mu$ 
     by \eqref{eq:convergence:1}. This shows that $f_\nu$ converges to
     zero in the $\langle t_1,\ldots,t_m\rangle$-adic topology.

     Since $f_\nu$ converges to zero in the $\langle
     t_1,\ldots,t_m\rangle$-adic topology, for every monomial
     $\ut^\alpha\cdot\ux^\beta\cdot e_j$ there is only a finite number of
     $\nu$'s such that $\ut^\alpha\cdot\ux^\beta\cdot e_j$ is a monomial
     occurring in $f_\nu$. Thus the sum $\sum_{\nu=0}^\infty f_\nu$
     exists and is obviously $\ux$-homogeneous of degree $d$.
   \end{proof}

   From the proof of Theorem \ref{thm:HDDwR} we can deduce an algorithm for
   computing the determinate division with remainder up to arbitrary
   order, or if we don't require termination then it will ``compute'' the
   determinate division with remainder completely. Since for our
   purposes termination is not important, we will simply formulate the
   non-terminating algorithm.

   \begin{algorithm}[$\HDDwR$]\label{alg:HDDwR}
     \textsc{Input:} 
     \begin{minipage}[t]{11cm}
       $(f,G)$ with 
       $G=\{g_1,\ldots,g_k\}$ and $f,g_1,\ldots,g_k\in R[\ux]^s$
       $\ux$-homogeneous, $>$ a $\ut$-local monomial ordering
     \end{minipage}
     \\[0.2cm]
     \textsc{Output:} 
     \begin{minipage}[t]{11cm}
       $(q_1,\ldots,q_k,r)\in R[\ux]^k\times R[\ux]^s$ such
       that
       \begin{displaymath}
         f=q_1\cdot g_1+\ldots +q_k\cdot g_k+r
       \end{displaymath}
       is a homogeneous determinate division with remainder of $f$ satisfying
       (DD1), (DD2) and (DDH).
     \end{minipage}
     \\[0.2cm]
     \textsc{Instructions:}
     \begin{itemize}
     \item $f_0:=f$
     \item $r:=0$       
     \item FOR $i=1,\ldots,k$ DO $q_i:=0$ 
     \item $\nu:=0$
     \item WHILE $f_\nu\not=0$ DO
       \begin{itemize}
       \item $q_{0,\nu}:=0$
       \item FOR $i=1,\ldots,k$ DO
         \begin{itemize}
         \item $h_{i,\nu}:=\sum_{p\in \mathcal{T}_{f_\nu}\;:\;\lm_>(g_i)\;|\;p}p$
         \item $q_{i,\nu}:=\frac{h_{i,\nu}}{\lt_>(g_i)}$
         \item $q_i:=q_i+q_{i,\nu}$
         \end{itemize}
       \item $r_\nu:=f_\nu-q_{1,\nu}\cdot\lt_>(g_1)-\ldots-q_{k,\nu}\cdot\lt_>(g_k)$ 
       \item $r:=r+r_\nu$ 
       \item $f_{\nu+1}:=f_\nu-q_{1,\nu}\cdot
         g_1-\ldots-q_{k,\nu}\cdot g_k-r_\nu$  
       \item $\nu:=\nu+1$
       \end{itemize}
     \end{itemize}
   \end{algorithm}

   \begin{remark}\label{rem:HDDwR}
     If $m=0$, i.e.\ if the input data $f,g_1,\ldots,g_k\in K[\ux]^s$,
     then Algorithm \ref{alg:HDDwR} terminates since for a given
     degree there are only finitely many monomials of this degree and
     therefore there cannot exist an infinite sequence of homogeneous
     polynomials $(f_\nu)_{\nu\in\N}$ of the same degree with 
     \begin{displaymath}
       \lm_>(f_1)>\lm_>(f_2)>\lm_>(f_3)>\ldots.
     \end{displaymath}
   \end{remark}


   \section{Division with Remainder in $K[[\ut]][\ux]^s$}\label{sec:DwR}

   We will use the existence of homogeneous determinate divisions
   with remainder to show that in $R[\ux]^s$ weak normal forms
   exist. In order to be able to apply this existence result we have
   to homogenise, and we need to extend our monomial ordering to the
   homogenised monomials.

   \begin{definition}
     Let $\ux_h=(x_0,\ux)=(x_0,\ldots,x_n)$.
     \begin{enumerate}
     \item      For $0\not=f\in R[\ux]^s$. We
       define the \emph{homogenisation} $f^h$ of $f$ to be
       \begin{displaymath}
         f^h:=x_0^{\deg_{\ux}(f)}\cdot
         f\left(\ut,\frac{x_1}{x_0},\ldots,\frac{x_n}{x_0}\right)
         \in R[\ux_h]_{\deg_{\ux}(f)}^s
       \end{displaymath}
       and $0^h:=0$. If $T\subset R[\ux]^s$ then we set
       \bmath
         T^h:=\big\{f^h\;\big|\;f\in T\big\}.
       \emath
     \item We call the $R[\ux]$-linear map
       \bmath
         d:R[\ux_h]^s\longrightarrow R[\ux]^s:g\mapsto g^d:=g_{|x_0=1}\lang{=g(\ut,1,\ux)}
       \emath
       the \emph{dehomogenisation} with respect to $x_0$.
     \item  Given a $\ut$-local monomial ordering $>$ on
       $\Mon^s(\ut,\ux)$ we define a $\ut$-local monomial ordering $>_h$ on
       $\Mon^s(\ut,\ux_h)$ by
       \begin{displaymath}
         \ut^\alpha\cdot\ux^\beta\cdot x_0^a\cdot
         e_i\;>_h\;\ut^{\alpha'}\cdot\ux^{\beta'}\cdot x_0^{a'}\cdot
         e_j
       \end{displaymath}
       if and only if 
       \begin{displaymath}
         |\beta|+a>|\beta'|+a'
       \end{displaymath}
       or
       \begin{displaymath}
         \big(|\beta|+a=|\beta'|+a'
         \;\mbox{ and }\;
         \ut^\alpha\cdot\ux^\beta\cdot
         e_i>\ut^{\alpha'}\cdot\ux^{\beta'}\cdot e_j\big),
       \end{displaymath}
       and we call it the \emph{homogenisation of $>$}.
     \end{enumerate}
   \end{definition}

   In the following remark we want to gather some straight forward
   properties of homogenisation and dehomogenisation.

   \begin{remark}
     Let $f,g\in R[\ux]^s$ and $F\in R[\ux_h]_k^s$. Then:
     \begin{enumerate}
     \item $f=(f^h)^d$.
     \item $F=(F^d)^h\cdot x_0^{\deg_{\ux_h}(F)-\deg_{\ux}(F^d)}$.
     \item $\lm_{>_h}(f^h)=x_0^{\ecart(f)}\cdot \lm_>(f)$.
     \item
       $\lm_{>_h}(g^h)|\lm_{>_h}(f^h)\Longleftrightarrow
       \lm_>(g)|\lm_>(f)\;\wedge\;\ecart(g)\leq\ecart(f)$.
     \item
       $\lm_{>_h}(F)=x_0^{\ecart(F^d)+\deg_{\ux_h}(F)-\deg_{\ux}(f)}
       \cdot\lm_>(F^d)$.
     \end{enumerate}
   \end{remark}

   \begin{theorem}[Division with Remainder]\label{thm:DwR}
     Let $>$ be a $\ut$-local monomial ordering on
     $\Mon^s(\ut,\ux)$ and $g_1,\ldots,g_k\in R[\ux]^s$.
     Then any $f\in R[\ux]^s$ has a weak division
     with remainder with respect to $g_1,\ldots,g_k$\kurz{.}\lang{, i.e.\ there
     exist $q_1,\ldots,q_k\in R[\ux]$, $r\in R[\ux]^s$ and $u\in S_>$ such that
     \bmath
       u\cdot f=q_1\cdot g_1+\ldots +q_k\cdot g_k+r
     \emath
     satisfies 
     \begin{enumerate}
     \item[(ID1)] $\lm_>(f)\geq \lm_>(q_i\cdot g_i)$ for
       $i=1,\ldots,k$, and
     \item[(ID2)]
       $\lm_>(r)\not\in\langle\lm_>(g_1),\ldots,\lm_>(g_k)\rangle$ 
       unless $r=0$.
     \end{enumerate}
     }
   \end{theorem}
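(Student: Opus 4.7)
The plan is to bootstrap Theorem \ref{thm:HDDwR} (HDDwR) to the non-homogeneous setting by $\ux$-homogenisation combined with a Mora-style iterative procedure that produces the unit $u \in S_>$. Let $\ux_h = (x_0, \ux)$ and homogenise the input to obtain $\ux_h$-homogeneous elements $f^h, g_1^h, \ldots, g_k^h \in R[\ux_h]^s$; with the homogenised ordering $>_h$ on $\Mon^s(\ut, \ux_h)$ we have $\lm_{>_h}(f^h) = x_0^{\ecart(f)} \cdot \lm_>(f)$ and similarly for the $g_i^h$.

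Construct recursively a sequence of intermediate remainders $f_0 = f, f_1, f_2, \ldots \in R[\ux]^s$, units $u_0 = 1, u_1, u_2, \ldots \in S_>$, and polynomial coefficients $q_{i,\nu} \in R[\ux]$ such that at every stage
\begin{displaymath}
   u_\nu \cdot f \;=\; \sum_{i=1}^k q_{i,\nu} \cdot g_i \;+\; f_\nu,
\end{displaymath}
with $\lm_>(f) \geq \lm_>(q_{i,\nu} \cdot g_i)$ for every $i$. At step $\nu$: if no $\lm_>(g_i)$ divides $\lm_>(f_\nu)$, stop. Otherwise select a divisor $g_i$ of minimal ecart among those whose leading monomial divides $\lm_>(f_\nu)$; if $\ecart(g_i) \leq \ecart(f_\nu)$, perform the obvious one-term reduction $f_{\nu+1} := f_\nu - \frac{\lt_>(f_\nu)}{\lt_>(g_i)} \cdot g_i$. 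If $\ecart(g_i) > \ecart(f_\nu)$, first augment the active divisor list by $f_\nu$ itself and only then reduce. Because $f_\nu$ is already expressed in terms of $f$ and the $g_i$ by the previous equation, absorbing it multiplies $u_\nu$ by an element of $S_>$ and modifies the $q_{i,\nu}$ accordingly.

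Convergence of $(f_\nu)$, $(u_\nu)$, and $(q_{i,\nu})$ in the $\langle t_1, \ldots, t_m \rangle$-adic topology is then established in exactly the same spirit as in the proof of Theorem \ref{thm:HDDwR}: after homogenising $f_\nu$, Lemma \ref{lem:weightedorder} produces a $\ut$-local weight ordering agreeing with $>_h$ on the finitely many relevant leading monomials, and Lemma \ref{lem:convergence} applied to the strictly decreasing sequence $\lm_{>_h}(f_\nu^h)$ forces $f_\nu \to 0$ $\ut$-adically. Passing to limits yields $u := \lim u_\nu \in S_>$, $q_i := \lim q_{i,\nu} \in R[\ux]$, and $r := \lim f_\nu \in R[\ux]^s$, giving the desired weak division $u \cdot f = \sum q_i g_i + r$. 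Conditions (ID1) and (ID2) carry over to the limit because they are preserved at every stage by construction.

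The principal obstacle is arranging the recursion so that $\lm_{>_h}(f_\nu^h)$ decreases strictly under $>_h$ at every step (this is the essential input to Lemma \ref{lem:convergence}) while simultaneously ensuring that the accumulated unit factor remains of the form $1 + \text{(monomials} < 1)$ and thus lies in $S_>$. The Mora ecart trick—augmenting the divisor list by the current $f_\nu$ whenever its ecart is smaller than that of the candidate divisor—is precisely what guarantees strict decrease in the homogenised picture; the price is that this augmentation must be tracked through the recursion to verify that each $u_\nu$ lies in $S_>$, which in turn uses that $\lm_>(f) \geq \lm_>(q_{i,\nu} g_i)$ and that the reduction coefficients at each step have leading monomial $1$.
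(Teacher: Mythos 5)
Your approach diverges substantially from the paper's and contains a genuine gap. The paper's Algorithm \ref{alg:DwR} does \emph{not} perform single-term Mora-style reductions: at each of finitely many recursion steps it calls the \emph{full} homogeneous determinate division $\HDDwR$ (which already packages the infinite $\langle t_1,\ldots,t_m\rangle$-adic process of Theorem \ref{thm:HDDwR} internally), and then proves that the outer recursion terminates after finitely many rounds by a Noetherian chain argument on the leading submodules $L_{>_h}(T_\nu^h)$ of the growing homogenised divisor list. You instead propose one-term reductions of $\lm_>(f_\nu)$ and try to argue directly that the whole sequence $(f_\nu)$ converges $\ut$-adically.

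The problem is that Lemma \ref{lem:convergence} cannot be invoked the way you claim. That lemma requires the sequence $(f_\nu)$ to consist of $\ux$-homogeneous elements of a \emph{fixed} $\ux$-degree $d$; this hypothesis is essential, since the finiteness of the sets $M_k$ in its proof depends on $|\beta|=d$ being fixed, and without it the statement is simply false (e.g.\ $m=0$ or $x_i<1$: the sequence $x_1, x_1^2, x_1^3, \ldots$ has strictly decreasing leading monomials but does not converge $\ut$-adically). In your recursion the intermediate remainders $f_\nu$ are not $\ux$-homogeneous at all, and even their homogenisations $f_\nu^h$ have $\ux_h$-degree $\deg_{\ux}(f_\nu)$ that can grow without bound along the iteration (a single reduction $f_\nu - \tfrac{\lt_>(f_\nu)}{\lt_>(g_i)}g_i$ can raise $\deg_{\ux}$ by $\ecart_>(g_i)$). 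Moreover $\lm_{>_h}(f_\nu^h)=x_0^{\ecart_>(f_\nu)}\cdot\lm_>(f_\nu)$ need not strictly decrease under $>_h$, because the ecart factor $x_0^{\ecart_>(f_\nu)}$ can jump upward between steps. So neither the hypothesis ``strictly decreasing $\lm_{>_h}(f_\nu^h)$'' nor the hypothesis ``fixed $\ux_h$-degree'' holds, and the claimed $\ut$-adic convergence is unsubstantiated. The ecart trick you invoke is indeed the right mechanism, but to extract a finiteness statement from it one needs the Noetherian argument that the paper uses: the ascending chain $L_{>_h}(T_1^h)\subseteq L_{>_h}(T_2^h)\subseteq\cdots$ stabilises, after which the ecart comparison forces $e_\nu\leq 0$ and the algorithm must halt in the next round. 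Without restructuring along those lines (calling $\HDDwR$ wholesale per round so that each round is a legitimate application of Lemma \ref{lem:convergence} at fixed degree, and then bounding the number of rounds), your proof cannot be completed.
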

   \begin{proof}
     The proof follows from the correctness and termination of
     Algorithm \ref{alg:DwR}, which assumes the existence of the
     homogeneous determinate division with remainder from
     Theorem \ref{thm:HDDwR} respectively Algorithm
     \ref{alg:HDDwR}.
   \end{proof}

   The following algorithm relies on the HDDwR-Algorithm, and it only
   terminates under the assumption that we are able to produce
   homogeneous determinate divisions with remainder, which implies
   that it is not an algorithm that can be applied in practise.

   \begin{algorithm}[$\DwR$ - Mora's Division with Remainder]\label{alg:DwR}
     \textsc{Input:} 
     \begin{minipage}[t]{11cm}
       $(f,G)$ with 
       $G=\{g_1,\ldots,g_k\}$ and $f,g_1,\ldots,g_k\in R[\ux]^s$, 
       $>$ a $\ut$-local monomial ordering 
     \end{minipage}
     \\[0.2cm]
     \textsc{Output:} 
     \begin{minipage}[t]{9cm}
       $(u,q_1,\ldots,q_k,r)\in S_>\times R[\ux]^k\times R[\ux]^s$ such
       that
       \begin{displaymath}
         u\cdot f=q_1\cdot g_1+\ldots +q_k\cdot g_k+r
       \end{displaymath}
       is a weak division with remainder of $f$.
     \end{minipage}
     \\[0.2cm]
     \textsc{Instructions:}
     \begin{itemize}
     \item $T:=(g_1,\ldots,g_k)$
     \item $D:=\{g_i\in T\;|\; \lm_>(g_i)\;\mbox{ divides }\;\lm_>(f)\}$
     \item IF $f\not=0$ AND $D\not=\emptyset$ DO
       \begin{itemize}
       \item IF $e:=\min\{\ecart_>(g_i)\;|\;g_i\in D\}-\ecart_>(f)>0$ THEN
         \begin{itemize}
         \item $(Q_1',\ldots,Q_k',R'):=\HDDwR\big(x_0^e\cdot
           f^h,(\lt_{>_h}(g_1^h),\ldots,\lt_{>_h}(g_k^h)\big)$
         \item $f':=\big(x_0^e\cdot f^h-\sum_{i=1}^kQ_i'\cdot g_i^h\big)^d$
         \item
           $(u'',q_1'',\ldots,q_{k+1}'',r)
           :=\DwR\big(f',(g_1,\ldots,g_k,f)\big)$
         \item $q_i:=q_i''+u''\cdot {Q_i'}^d$, $\;\;\;i=1,\ldots,k$
         \item $u:=u''-q_{k+1}''$
         \end{itemize}
       \item ELSE
         \begin{itemize}
         \item $(Q_1',\ldots,Q_k',R'):=\HDDwR\big(f^h,(g_1^h,\ldots,g_k^h)\big)$
         \item
           $(u,q_1'',\ldots,q_{k+1}'',r):=\DwR\big((R')^d,T\big)$
         \item $q_i:=q_i''+u\cdot {Q_i'}^d$, $\;\;\;i=1,\ldots,k$
         \end{itemize}
       \end{itemize}
     \item ELSE $(u,q_1,\ldots,q_k,r)=(1,0,\ldots,0,f)$
     \end{itemize}
   \end{algorithm}
   \begin{proof}
     Let us first prove the \emph{termination}. For this we denote the
     numbers, ring elements and sets, which occur in the $\nu$-th
     recursion step by a
     subscript $\nu$, e.g.\ $e_\nu$, $f_\nu$ or $T_\nu$. Since 
     \begin{displaymath}
       T_1^h\subseteq T_2^h\subseteq T_3^h\subseteq\ldots
     \end{displaymath}
     also their leading submodules in $K[\ut,\ux_h]^s$ form an ascending chain
     \begin{displaymath}
       L_{>_h}(T_1^h)\subseteq L_{>_h}(T_2^h)\subseteq L_{>_h}(T_3^h)\subseteq\ldots,
     \end{displaymath}
     and since the polynomial ring is noetherian there must be an $N$
     such that
     \begin{displaymath}
       L_{>_h}(T_\nu^h)=L_{>_h}(T_N^h)\;\;\;\forall\;\nu\geq N.
     \end{displaymath}
     If $g_{i,N}\in T_N$ such
     that $\lm_>(g_{i,N})\;|\;\lm_>(f_N)$ with
     $\ecart_>(g_{i,N})\leq\ecart_>(f_N)$, then
     \begin{displaymath}
       \lm_{>_h}(g_{i,N}^h)
       \;\big|\;
       \lm_{>_h}(f_N^h).
     \end{displaymath}
     We thus have either $\lm_{>_h}(g_{i,N}^h)\;|\;\lm_{>_h}(f_N^h)$ for some
     $g_i\in D^N\subseteq T^{N+1}$ or $f_N\in T_{N+1}$, and hence
     \begin{displaymath}
       \lm_{>_h}(f_N^h)\in L_{>_h}(T_{N+1}^h)=L_{>_h}(T_N^h).
     \end{displaymath}
     This ensures the existence of a $g_{i,N}\in T_N$ such
     that 
     \begin{displaymath}
       \lm_{>_h}(g_{i,N}^h)\;|\;\lm_{>_h}(f_N^h)
     \end{displaymath}
     which in turn implies
     that 
     \begin{displaymath}
       \lm_{>}(g_{i,N})\;|\;\lm_{>}(f_N),
     \end{displaymath}
     $e_N\leq \ecart_>(g_{i,N})-\ecart_>(f_N)\leq 0$ and $T_N=T_{N+1}$. By
     induction we conclude 
     \begin{displaymath}
       T_\nu=T_N\;\;\;\forall\;\nu\geq N,
     \end{displaymath}
     and 
     \begin{equation}\label{eq:DwR:1}
       e_{\nu}\leq 0\;\;\;\forall\;\nu\geq N.
     \end{equation}

     Since in the $N$-th recursion step we are in the first ``ELSE''
     case we have $(R'_N)^d=f_{N+1}$, and by the properties of HDDwR we
     know that for all $g\in T_N$
     \begin{displaymath}
       x_0^{\ecart_>(g)}\cdot \lm_>(g)=\lm_{>_h}(g^h)
       \;\not\big|\;
       \lm_{>_h}(R'_N)
     \end{displaymath}
     and that 
     \begin{displaymath}
       \lm_{>_h}(R'_N)
       =x_0^a\cdot\lm_{>_h}(f_{N+1}^h)
       =x_0^{a+\ecart_>(f_{N+1})}\cdot \lm_>(f_{N+1})       
     \end{displaymath}
     for some $a\geq 0$. It follows that, whenever
     $\lm_>(g)\;|\;\lm_>(f_{N+1})$, then necessarily
     \begin{equation}
       \label{eq:DwR:2}
       \ecart_>(g)> a+\ecart_>(f_{N+1})\geq\ecart_>(f_{N+1}).
     \end{equation}

     Suppose now that $f_{N+1}\not=0$ and $D_{N+1}\not=\emptyset$. Then
     we may choose $g_{i,N+1}\in D_{N+1}\subseteq T_{N+1}=T_N$ such that 
     \begin{displaymath}
       \lm_>(g_{i,N+1})\;\big|\;\lm_>(f_{N+1})
     \end{displaymath}
     and
     \begin{displaymath}
       e_{N+1}=\ecart_>(g_{i,N+1})-\ecart_>(f_{N+1}).
     \end{displaymath}
     According to \eqref{eq:DwR:1} $e_{N+1}$ is
     non-positive, while according to \eqref{eq:DwR:2} it must be
     strictly positive. Thus we have derived a contradiction which
     shows that either $f_{N+1}=0$ or $D_{N+1}=\emptyset$, and in any
     case the algorithm stops.

     Next we have to prove the \emph{correctness}. We do this by induction on
     the number of recursions, say $N$, of the algorithm. 

     If $N=1$ then either $f=0$ or $D=\emptyset$, and in both cases
     \begin{displaymath}
       1\cdot f=0\cdot g_1+\ldots+0\cdot g_k+f
     \end{displaymath}
     is a weak division with remainder of $f$ satisfying (ID1) and (ID2). 
     We may thus assume that $N>1$ and $e=\min\{\ecart_>(g)\;|\;g\in
     D\}-\ecart_>(f)$. 

     If $e\leq 0$ then by Theorem \ref{thm:HDDwR} 
     \begin{displaymath}
       f^h=Q_1'\cdot g_1^h+\ldots+Q_k'\cdot g_k^h+R'
     \end{displaymath}
     satisfies (DD1), (DD2) and (DDH). (DD1) implies that for each
     $i=1,\ldots,k$ we have 
     \begin{multline*}
       x_0^{\ecart_>(f)}\cdot \lm_>(f)=\lm_{>_h}(f^h)\geq\\ \lm_{>_h}(Q_i')\cdot
       \lm_{>_h}(g_i^h)=x_0^{a_i+\ecart_>(g_i)}\cdot \lm_>\big({Q_i'}^d\big)\cdot \lm_>(g_i)
     \end{multline*}
     for some $a_i\geq 0$,
     and since $f^h$ and $Q_i'\cdot g_i^h$ are $\ux_h$-homogeneous of the same $\ux_h$-degree 
     by (DDH) the definition of the homogenised
     ordering implies that necessarily
     \begin{displaymath}
       \lm_>(f)\geq \lm_>\big({Q_i'}^d\big)\cdot \lm_>(g_i)\;\;\;\forall\;i=1,\ldots,k.
     \end{displaymath}
     Note that
     \begin{displaymath}
       (R')^d=\left(f^h-\sum_{i=1}^k Q_i'\cdot
         g_i^h\right)^d=f-\sum_{i=1}^k{Q_i'}^d\cdot g_i,
     \end{displaymath}
     and thus
     \begin{displaymath}
       \lm_>\big((R')^d\big)=\lm_>\left(f-\sum_{i=1}^k{Q_i'}^d\cdot
         g_i\right)\leq\lm_>(f).
     \end{displaymath}
     Moreover, by induction 
     \begin{displaymath}
       u\cdot (R')^d=q_1''\cdot g_1+\ldots q_k''\cdot g_k+r
     \end{displaymath}
     satisfies (ID1) and (ID2). But (ID1) implies that
     \begin{displaymath}
       \lm_>(f)\;\geq\; \lm_>\big((R')^d\big)\;\geq\;\lm_>(q_i''\cdot g_i),
     \end{displaymath}
     so that 
     \begin{displaymath}
       u\cdot f
       =\sum_{i=1}^k \big(q_i''+u\cdot {Q_i'}^d\big)\cdot g_i+r
     \end{displaymath}
     satisfies (ID1) and (ID2).

     It remains to consider the case $e>0$. Then by Theorem \ref{thm:HDDwR} 
     \begin{equation}\label{eq:DwR:3}
       x_0^e\cdot f^h=Q_1'\cdot \lt_{>_h}(g_1^h)+\ldots+Q_k'\cdot \lt_{>_h}(g_k^h)+R'
     \end{equation}
     satisfies (DD1), (DD2) and (DDH). (DD1) and (DD2) imply (ID1) for
     this representation, which means that for some $a_i\geq 0$ 
     \begin{multline*}
       x_0^{e+\ecart_>(f)}\cdot \lm_>(f)=\lm_{>_h}(x_0^e\cdot f^h)\geq\\ 
       \lm_{>_h}(Q_i')\cdot
       \lm_{>_h}\big(\lt_{>_h}(g_i^h)\big)=x_0^{a_i+\ecart_>(g_i)}\cdot
       \lm_>({Q_i'}^d)\cdot \lm_>(g_i),
     \end{multline*}
     and since both sides are $\ux_h$-homogeneous of the same $\ux_h$-degree with
     by (DDH) we again necessarily have
     \begin{displaymath}
       \lm_>(f)\geq \lm_>\big({Q_i'}^d\big)\cdot \lm_>(g_i).
     \end{displaymath}
     Moreover, by induction
     \begin{equation}\label{eq:DwR:4}
       u''\cdot \left(f-\sum_{i=1}^k {Q_i'}^d\cdot g_i\right) =
       \sum_{i=1}^k q_i''\cdot g_i+q_{k+1}''\cdot f+r
     \end{equation}
     satisfies (ID1) and (ID2).

     Since $\lt_>(u'')=1$ we have
     \begin{displaymath}
       \lm_>(f)\geq \lm_>\big(q_i''+u''\cdot {Q_i'}^d\big)\cdot \lm_>(g_i),
     \end{displaymath}
     for $i=1,\ldots,k$ and therefore
     \begin{displaymath}
       (u''-q_{k+1}'')\cdot f=
       \sum_{i=1}^k\big(q_i''+u''\cdot
       {Q_i'}^d\big)\cdot g_i+r
     \end{displaymath}
     satisfies (ID1) and (ID2) as well. It remains to show that
     $u=u''-q_{k+1}''\in S_>$, or equivalently that
     \begin{displaymath}
       \lt_>(u''-q_{k+1}'')=1.
     \end{displaymath}
     By assumption there is a $g_i\in D$ such that
     $\lm_>(g_i)\;|\;\lm_>(f)$ and $\ecart_>(g_i)-\ecart_>(f)=e$. Therefore,
     $\lm_{>_h}(g_i^h)\;|\;x_0^e\cdot \lm_{>_h}(f^h)$ and thus in the
     representation \eqref{eq:DwR:3} the leading term of $x_0^e\cdot f^h$ has
     been cancelled by some $Q_j'\cdot \lt_{>_h}(g_j^h)$, which implies that
     \begin{displaymath}
       \lm_{>_h}(f^h)>\lm_{>_h}\left(f^h-\sum_{i=1}^k{Q_i'}\cdot
       g_i^h\right),
     \end{displaymath}
     and since both sides are $\ux_h$-homogeneous of the same
     $\ux_h$-degree, unless the right hand side is zero, we must have
     \begin{displaymath}
       \lm_>(f)>\lm_>\left(f-\sum_{i=1}^k{Q_i'}^d\cdot
       g_i\right)\geq \lm_>(q_{k+1}''\cdot f),
     \end{displaymath}
     where the latter inequality follows from (ID1) for \eqref{eq:DwR:4}.
     Thus however $\lm_>(q_{k+1}'')<1$, and since $\lm_>(u'')=1$
     we conclude that
     \begin{displaymath}
       \lt_>(u''-q_{k+1}'')=\lt_>(u'')=1.
     \end{displaymath}
     This finishes the proof.     
   \end{proof}

   \begin{remark}\label{rem:DwR}
     As we have pointed out our algorithms are not useful for
     computational purposes since Algorithm \ref{alg:HDDwR} does not in
     general terminate after a finite number of steps. If, however, the
     input data are in fact polynomials in $\ut$ and $\ux$, then we can
     replace the $t_i$ by $x_{n+i}$ and apply Algorithm \ref{alg:DwR} to
     $K[x_1,\ldots,x_{n+m}]^s$, so that it 
     terminates due to Remark \ref{rem:HDDwR} the computed weak
     division with remainder 
     \begin{displaymath}
       u\cdot f=q_1\cdot g_1+\ldots+q_k\cdot g_k+r
     \end{displaymath}
     is then \emph{polynomial} in the sense that $u,q_1,\ldots,q_k\in
     K[\ut,\ux]$ and $r\in K[\ut,\ux]^s$.
     In fact, Algorithm
     \ref{alg:DwR} is then only a variant of 
     the usual Mora algorithm.
   \end{remark}

   In the proof of Schreyer's Theorem we will need the existence of
   weak divisions with remainder satisfying (SID2)\lang{.}\kurz{, the
     proof is the same as \cite[Remark 2.3.4]{GP02}.}

   \lang{
   \begin{corollary}\label{cor:sid2}
     Let $>$ be a $\ut$-local monomial ordering on $\Mon^s(\ut,\ux)$
     and $g_1,\ldots,g_k\in R[\ux]^s$. Then any $f\in R[\ux]^s$ has a weak division
     with remainder with respect to $g_1,\ldots,g_k$ satisfying (SID2).
   \end{corollary}
   \begin{proof}
     We do the proof by induction on $s$ where for $s=1$ the condition
     (SID2) coincides with (ID2) and thus the result follows from
     Theorem \ref{thm:DwR}. We may therefore assume that $s>1$.
     
     By Theorem \ref{thm:DwR} there exists a week division
     with remainder
     \begin{equation}\label{eq:sid2:1}
       u\cdot f=q_1\cdot g_1+\ldots+q_k\cdot g_k+r,
     \end{equation}
     and obviously, there is a $j\in\{1,\ldots,s\}$ such that
     $\lm_>(r)=\lm_>(r_j\cdot e_j)$, unless $r=0$ -- in which case we
     are done. In order to keep the notation
     short we assume that $j=s$ and we may assume that the $g_i$
     are ordered in such a way that for some $1\leq l\leq k$
     \begin{displaymath}
       \lm_>(g_i)\in R[\ux]\cdot e_s
       \;\;\;\Longleftrightarrow\;\;\;
       i>l,
     \end{displaymath}
     i.e.\ only for the last $k-l$ of the $g_i$ the leading monomial
     depends on $e_s$.

     Consider now the projection 
     \begin{displaymath}
       \pi:R[\ux]^s\longrightarrow
       R[\ux]^{s-1}:(p_1,\ldots,p_s)\mapsto (p_1,\ldots,p_{s-1}),
     \end{displaymath}
     the inclusion
     \begin{displaymath}
        \iota:R[\ux]^{s-1}\longrightarrow R[\ux]^s:(p_1,\ldots,p_{s-1})\mapsto(p_1,\ldots,p_{s-1},0),
     \end{displaymath}
     and the restriction, say $>_*$, of $>$ to $\Mon^{s-1}(\ut,\ux)$ defined by
     \begin{displaymath}
       p\;>_*\;p'\;\;\;:\Longleftrightarrow\;\;\;
       \iota(p)>\iota(p')
     \end{displaymath}
     for $p,p'\in\Mon^{s-1}(\ut,\ux)$ -- which is again
     a $\ut$-local monomial ordering. Note, also that for $p\in
     R[\ux]^{s-1}$ we obviously have
     \begin{equation}\label{eq:sid2:2}
       \lm_>\big(\iota(p)\big)=\iota\big(\lm_{>_*}(p)\big).
     \end{equation}
     Moreover, due to the ordering of the $g_i$ we have for
     $i=1,\ldots,l$ 
     \begin{displaymath}
       \lm_>(g_i)=\lm_>\Big(\iota\big(\pi(g_i)\big)\Big).
     \end{displaymath}

     By induction hypothesis there exists a weak division
     with remainder of $\pi(r)=(r_1,\ldots,r_{s-1})$ with respect to $>_*$, say
     \begin{equation}\label{eq:sid2:3}
       u'\cdot \pi(r)=q_1'\cdot \pi(g_1)+\ldots+q_l'\cdot\pi(g_l)+r'
     \end{equation}
     with $u'\in S_{>_*}=S_>$, $q_1',\ldots,q_l'\in R[\ux]$ and
     $r'=(r_1',\ldots,r_{s-1}')\in R[\ux]^{s-1}$, satisfying (ID1) and
     (SID2). 

     We want to show that
     \begin{equation}\label{eq:sid2:4}
       u\cdot u'\cdot f=
       \sum_{i=1}^l(u'\cdot q_i+q_i')\cdot g_i+\sum_{i=l+1}^ku'\cdot
       q_i\cdot g_i+r'',
     \end{equation}
     with $r''=(r_1',\ldots,r_{s-1}',r_s)$, satisfies (ID1) and
     (SID2).

     Since $u,u'\in S_>$ have leading terms $1$ leading terms do not
     change by multiplication with $u$ or $u'$. Moreover, since
     \eqref{eq:sid2:1} and \eqref{eq:sid2:3} both satisfy (ID1) and
     taking \eqref{eq:sid2:2} into account we have 
     \begin{displaymath}
       \lm_>(f)\geq \lm_>(r)>\lm_>\Big(\iota\big(\pi(r)\big)\Big)\geq
       \lm_>\Big(q_i'\cdot\iota\big(\pi(g_i)\big)\Big)=\lm_>\big(q_i'\cdot g_i\big)
     \end{displaymath}
     for $i=1,\ldots,l$. Thus by (ID1) for \eqref{eq:sid2:1} we have
     \begin{displaymath}
       \lm_>(u\cdot u'\cdot f)\geq \lm_>\big((u'\cdot q_i+q_i')\cdot g_i\big),
     \end{displaymath}
     for $i=1,\ldots,l$ and 
     \begin{displaymath}
       \lm_>(u\cdot u'\cdot f)\geq \lm_>(u'\cdot q_i\cdot g_i\big)
     \end{displaymath}
     for $i=l+1,\ldots,k$,
     which shows that \eqref{eq:sid2:4} satisfies (ID1).

     Moreover, by (SID2) for \eqref{eq:sid2:3} we know that, unless $r_j'=0$,
     \begin{displaymath}
       \lm_{>_*}\big(\pi(g_i)\big)\;\not\big|\;\lm_{>_*}\big((0,\ldots,r_j',\ldots,0)\big)       
     \end{displaymath}
     for $j=1,\ldots,s-1$ and $i=1,\ldots,l$, and hence unless $r_j'=0$,
     \begin{displaymath}
       \lm_>(g_i)=\lm_>\Big(\iota\big(\pi(g_i)\big)\Big)\;\not\big|\;\lm_>(r_j'\cdot e_j)
     \end{displaymath}
     for $j=1,\ldots,s-1$ and $i=1,\ldots,l$.
     And since $\lm_>(g_i)$ involves $e_s$ for $i=l+1,\ldots,k$ but
     $r_j'\cdot e_j$ does not for $j=1,\ldots,s-1$, we have
     \begin{displaymath}
       \lm_>(g_i)\;\not\big|\;\lm_>(r_j'\cdot e_j)
     \end{displaymath}
     for any $i=1,\ldots,k$ and $j=1,\ldots,s-1$. And since by (Id2)
     for \eqref{eq:sid2:1} we also have that
     \begin{displaymath}
       \lm_>(g_i)\;\not\big|\;\lm_>(r)=\lm_>(r_s\cdot e_s)
     \end{displaymath}
     for any $i=1,\ldots,k$, we are done, i.e.\ \eqref{eq:sid2:4} satisfies (SID2).     
   \end{proof}
   }

   \begin{corollary}\label{cor:DwR}
     Let $>$ be a $\ut$-local monomial ordering on $\Mon^s(\ut,\ux)$
     and $g_1,\ldots,g_k\in R[\ux]_>^s$. Then any $f\in R[\ux]_>^s$ has a division
     with remainder with respect to $g_1,\ldots,g_k$ satisfying (SID2).     
   \end{corollary}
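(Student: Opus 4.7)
The plan is to reduce the statement to the already established \emph{weak} division with remainder in $R[\ux]^s$ (Corollary \ref{cor:sid2} / the (SID2) strengthening of Theorem \ref{thm:DwR}), by clearing denominators. Recall that by definition the leading monomial, leading coefficient, and leading term of an element $g=\frac{G}{v}\in R[\ux]_>^s$ (with $v\in S_>$) coincide with those of its numerator $G\in R[\ux]^s$; this independence of the chosen representative has already been justified in the definition of $\lm_>,\lc_>,\lt_>$ on $R[\ux]_>^s$.

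Concretely, I would proceed as follows. Write
\bmath
  f=\tfrac{F}{v},\quad g_1=\tfrac{G_1}{v_1},\ldots,g_k=\tfrac{G_k}{v_k}
\emath
with $F,G_1,\ldots,G_k\in R[\ux]^s$ and $v,v_1,\ldots,v_k\in S_>$. Then $\lm_>(f)=\lm_>(F)$ and $\lm_>(g_i)=\lm_>(G_i)$ for each $i$. Apply Corollary \ref{cor:sid2} to $F$ with respect to $(G_1,\ldots,G_k)$ to obtain $u\in S_>$, $Q_1,\ldots,Q_k\in R[\ux]$ and $R\in R[\ux]^s$ such that
\bmath
  u\cdot F\;=\;Q_1\cdot G_1+\ldots+Q_k\cdot G_k+R
\emath
satisfies (ID1) and (SID2). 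Since $S_>$ is multiplicatively closed, $w:=u\cdot v\in S_>$ as well, and I can divide the whole equation by $w$ inside $R[\ux]_>^s$ and re-express each $G_i=v_i\cdot g_i$, yielding
\bmath
  f\;=\;q_1\cdot g_1+\ldots+q_k\cdot g_k+r,
\emath
where $q_i=\tfrac{Q_i\cdot v_i}{w}\in R[\ux]_>$ and $r=\tfrac{R}{w}\in R[\ux]_>^s$.

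Finally I would verify that this is a division with remainder satisfying (SID2). Again using that leading data on $R[\ux]_>^s$ are computed from the numerator, one gets $\lm_>(q_i)=\lm_>(Q_i\cdot v_i)=\lm_>(Q_i)$, so $\lm_>(q_i\cdot g_i)=\lm_>(Q_i\cdot G_i)$; hence (ID1) transfers, $\lm_>(f)=\lm_>(F)\geq\lm_>(Q_i\cdot G_i)=\lm_>(q_i\cdot g_i)$. Decomposing $R=\sum_{j=1}^s R_j\cdot e_j$ gives $r_j=\tfrac{R_j}{w}$, so $\lm_>(r_j\cdot e_j)=\lm_>(R_j\cdot e_j)$ and $r_j=0\Leftrightarrow R_j=0$; thus (SID2) for the representation of $u\cdot F$ translates directly into (SID2) for the representation of $f$.

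There is essentially no hard step: all the real work (existence of the weak division, convergence arguments, Grauert/Mora trick) is already packed into Theorem \ref{thm:DwR} and Corollary \ref{cor:sid2}. The only thing to be careful about is that the numerator-based definition of $\lm_>$ on $R[\ux]_>^s$ is compatible with both multiplication by and division by elements of $S_>$, which is exactly the well-definedness statement proved when $\lm_>$ was introduced on the localisation; once that is invoked, the clearing-of-denominators argument goes through without further incident.
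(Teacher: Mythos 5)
Your proof is correct and follows essentially the same route as the paper: clear denominators to land in $R[\ux]^s$, invoke the (SID2)-strengthened weak division from Corollary \ref{cor:sid2}, and then divide back by the product of the denominators, using that $\lm_>$, divisibility of leading monomials, and vanishing of components are all insensitive to multiplication or division by elements of $S_>$. The only cosmetic difference is that you apply Corollary \ref{cor:sid2} directly to the numerators $F,G_1,\ldots,G_k$ rather than first forming the common multiples $v\cdot f, v\cdot g_1,\ldots,v\cdot g_k$ as the paper does; the verification of (ID1) and (SID2) you give is, if anything, spelled out in slightly more detail than the paper's one-line justification.
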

   \lang{
   \begin{proof}
     Let $f=\frac{f'}{u}$ and $g_i=\frac{g_i'}{u_i}$, $i=1,\ldots,k$,
     with $f',g_1',\ldots,g_k'\in R[\ux]^s$ and $u,u_1,\ldots,u_k\in
     S_>$. Consider 
     \begin{displaymath}
       v=u\cdot u_1\cdot\ldots\cdot u_k\in S_>
     \end{displaymath}
     and
     \begin{displaymath}
       f''=v\cdot f,\;g_1''=v\cdot g_1,\;\ldots,\;g_k''=v\cdot g_k\in R[\ux].
     \end{displaymath}
     By Corollary \ref{cor:sid2} there exists a weak division
     with remainder
     \begin{equation}\label{eq:corDwR}
       u''\cdot f''=q_1''\cdot g_1''+\ldots+q_k''\cdot g_k''+r'',
     \end{equation}
     satisfying (ID1) and (SID2) with $u''\in S_>$,
     $q_1'',\ldots,q_k''\in R[\ux]$ and $r\in R[\ux]^s$. Setting
     \begin{displaymath}
       q_1=\frac{q_1''}{u''},\;\ldots,\;q_k=\frac{q_k''}{u''}\in R[\ux]_>
     \end{displaymath}
     and
     \begin{displaymath}       
       r=\frac{1}{u''\cdot v}\cdot r''\in R[\ux]_>^s,
     \end{displaymath}
     then
     \begin{displaymath}
       f=q_1\cdot g_1+\ldots+q_k\cdot g_k+r
     \end{displaymath}
     and this representation satisfies (ID1) and (SID2) since
     by definition the leading monomials of the elements (including
     those of the components $r_\nu\cdot e_\nu$) involved in
     this representation are the same as those in \eqref{eq:corDwR}.
   \end{proof}
   }


   \section{Standard Bases in $K[[\ut]][\ux]^s$}\label{sec:standardbases}

   \begin{definition}
     Let $>$ be $\ut$-local monomial ordering on $\Mon^s(\ut,\ux)$,
     $I\leq R[\ux]^s$ and $J\leq R[\ux]_>^s$ be submodules. 
     \lang{

     }
     A \emph{standard basis of $I$} is a finite
     subset $G\subset I$ such that
     \bmath
       L_>(I)=L_>(G).
     \emath
     A \emph{standard basis of $J$} is a finite
     subset $G\subset J$ such that
     \bmath
       L_>(J)=L_>(G).
     \emath
     A finite subset $G\subseteq R[\ux]_>^s$ is called a
     \emph{standard basis} with respect to $>$ if $G$ is a standard
     basis of $\langle G\rangle\leq R[\ux]_>^s$.
   \end{definition}

   The existence of standard bases is immediate from Hilbert's Basis Theorem.

   \begin{proposition}\label{prop:existencestd}
     If $>$ is a $\ut$-local monomial ordering then every submodule of
     $R[\ux]^s$ and of $R[\ux]_>^s$ has a standard basis.
   \end{proposition}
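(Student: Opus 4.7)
The plan is to handle the two cases separately, starting with $I \leq R[\ux]^s$ and then reducing the localised case to it via Remark \ref{rem:leadingsubmodule}.

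For the submodule case, observe that by definition $L_>(I)$ is the $K[\ut,\ux]$-submodule of the free module $K[\ut,\ux]^s$ generated by the (possibly infinite) family $\{\lm_>(f) \mid f \in I\}$. Since $K$ is a field, $K[\ut,\ux]$ is a Noetherian ring by Hilbert's Basis Theorem, so the free module $K[\ut,\ux]^s$ of finite rank $s$ is a Noetherian module. In particular, the submodule $L_>(I)$ is finitely generated. I would then argue that, because $L_>(I)$ is already generated by the family $\{\lm_>(f) \mid f \in I\}$, any finite generating set can be expressed as a $K[\ut,\ux]$-linear combination of finitely many members of this family; hence there exist $f_1,\ldots,f_k \in I$ such that $L_>(I) = \langle \lm_>(f_1),\ldots,\lm_>(f_k)\rangle = L_>(\{f_1,\ldots,f_k\})$, and $G:=\{f_1,\ldots,f_k\}$ is the desired standard basis.

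For the localised case $J \leq R[\ux]_>^s$, Remark \ref{rem:leadingsubmodule} gives us the crucial identity $L_>(J) = L_>(J \cap R[\ux]^s)$. Applying the first part to the submodule $J \cap R[\ux]^s \leq R[\ux]^s$ produces a finite $G \subset J \cap R[\ux]^s \subset J$ satisfying $L_>(G) = L_>(J \cap R[\ux]^s) = L_>(J)$, so $G$ is a standard basis of $J$.

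There is no real obstacle here; the only mild point requiring care is the passage from ``$L_>(I)$ is finitely generated'' to ``finitely many $\lm_>(f_i)$ with $f_i \in I$ suffice as generators,'' which is a routine observation about finite generating sets of a module already presented by a (possibly infinite) family. Everything else is a direct application of the Noetherian property of $K[\ut,\ux]^s$ and the identity from Remark \ref{rem:leadingsubmodule}.
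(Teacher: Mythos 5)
Your proof is correct and takes essentially the same route as the paper: the paper's proof is simply the one-line observation that the result ``follows since $K[\ut,\ux]^s$ is noetherian,'' and you have merely spelled out the standard details (passing from a finite generating set of $L_>(I)$ to finitely many $\lm_>(f_i)$, and reducing the $R[\ux]_>^s$ case to the $R[\ux]^s$ case via Remark \ref{rem:leadingsubmodule}).
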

   \lang{
   \begin{proof}
     Follows since $K[\ut,\ux]^s$ is noetherian.
   \end{proof}
   }

   Standard bases are so useful since they are generating sets for
   submodules of $R[\ux]_>^s$ and since
   submodule membership can be tested by division with remainder.

   \begin{proposition}\label{prop:basicstd}
     Let $>$ be $\ut$-local monomial ordering on $\Mon^s(\ut,\ux)$,
     $I,J\leq R[\ux]_>^s$ submodules, $G=(g_1,\ldots,g_k)\subset
     J$ a standard basis of $J$ and 
     $f\in R[\ux]_>^s$ with division with remainder 
     \lang{\begin{equation}\label{eq:basicstd:1}}
       \kurz{\begin{math}}
       f=q_1\cdot g_1+\ldots+q_k\cdot g_k+r.
       \kurz{\end{math}}
     \lang{\end{equation}}
     Then:
     \begin{enumerate}
     \item $f\in J$ if and only if $r=0$.
     \item $J=\langle G\rangle$.
     \item If $I\subseteq J$ and $L_>(I)=L_>(J)$, then $I=J$.
     \end{enumerate}
   \end{proposition}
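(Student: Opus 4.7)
The plan is to prove (a) first, then derive (b) as a direct consequence, and finally bootstrap (c) via the existence of a standard basis of $I$ together with (a) applied inside $I$.

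For part (a), the ``if'' direction is immediate: $r=0$ gives $f=\sum q_i\cdot g_i\in\langle G\rangle\subseteq J$. For the ``only if'' direction I would suppose $f\in J$ and observe that, since each $g_i\in J$ and $J$ is a submodule, $r=f-\sum q_i\cdot g_i$ also lies in $J$. If $r\neq 0$ then $\lm_>(r)\in L_>(J)=L_>(G)$. The key (routine) input is that a monomial lying in a submodule of $K[\ut,\ux]^s$ generated by monomials $\lm_>(g_1),\ldots,\lm_>(g_k)$ must be divisible by at least one of them. This would contradict condition (ID2) of the given division with remainder, so $r=0$.

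Part (b) follows immediately: $\langle G\rangle\subseteq J$ since $G\subseteq J$, and for the converse any $f\in J$ admits a division with remainder with respect to $G$ by Theorem \ref{thm:DwR} (in the case $J\leq R[\ux]^s$ one replaces $f$ by $u\cdot f$ for some $u\in S_>$ and notes $u\in R[\ux]_>^*$, while for $J\leq R[\ux]_>^s$ we invoke Corollary \ref{cor:DwR}). By part (a) the remainder vanishes, hence $f\in\langle G\rangle$.

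For part (c) I would apply Proposition \ref{prop:existencestd} to obtain a standard basis $G'=(g_1',\ldots,g_l')$ of $I$. Since $G'\subseteq I\subseteq J$, the hypothesis $L_>(I)=L_>(J)$ together with the defining property of a standard basis gives $L_>(G')=L_>(J)$, so $G'$ is also a standard basis of $J$. Given $f\in J$, I would perform a division with remainder of $f$ with respect to $G'$ and run the argument of part (a) verbatim, noting that the resulting remainder $r$ lies in $J$ (because $f\in J$ and each $g_i'\in I\subseteq J$), so $\lm_>(r)\in L_>(J)=L_>(G')$ forces $r=0$ by (ID2). Hence $f=\sum q_i\cdot g_i'\in\langle G'\rangle\subseteq I$, giving $J\subseteq I$ and therefore $I=J$. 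The only subtle point throughout is the monomial-divisibility lemma used in (a); everything else is a straight assembly of (ID2) with the submodule property of the remainder.
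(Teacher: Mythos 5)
Your proposal is correct and follows essentially the same route as the paper: (a) uses the submodule property of $r$ plus (ID2), (b) invokes Corollary \ref{cor:DwR} and part (a), and (c) produces a standard basis $G'$ of $I$ via Proposition \ref{prop:existencestd} and observes it is simultaneously a standard basis of $J$. The only cosmetic difference is that in (c) you rerun the (a)-argument for $G'$, whereas the paper simply applies (b) to $G'$ as a standard basis of both $I$ and $J$ to get $I=\langle G'\rangle=J$; the two are logically interchangeable.
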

   \begin{proof}
     \kurz{Word by word as in \cite[Lemma 1.6.7]{GP02}.}
     \lang{
     \begin{enumerate}
     \item If $r=0$ then obviously $f\in \langle G\rangle\subseteq J$.
       If conversely $f\in J$ then
       \begin{displaymath}
         r=f-q_1\cdot g_1-\ldots-q_k\cdot g_k\in J,
       \end{displaymath}
       and therefore $\lm_>(r)\in L_>(J)=L_>(G)$. But then (ID2)
       implies $r=0$.
     \item If $f\in J$ then by Corollary \ref{cor:DwR} $f$ has a 
       division with remainder as in \eqref{eq:basicstd:1}, but by (a)
       $r=0$ and thus $f\in \langle G\rangle$ since $u$ is a unit.
     \item By Proposition \ref{prop:existencestd} there exists a
       standard basis $G'\subseteq I\subseteq J$ of $I$. But then
       $G'$ is a standard basis of $J$, since
      \begin{displaymath}
         L_>(G')=L_>(I)=L_>(J),
       \end{displaymath}
       and thus $G'$  generates both, $I$ and $J$,  by (b).
     \end{enumerate}
     }
   \end{proof}

   In order to work, even theoretically, with standard bases it is
   vital to have a good criterion to decide whether a generating set is
   standard basis or not. In order to formulate Buchberger's Criterion
   it is helpful to have the notion of an \emph{s-polynomial}.

   \begin{definition}
     Let $>$ be a $\ut$-local monomial ordering on $R[\ux]^s$ and
     $f,g\in R[\ux]^s$. We define the \emph{s-polynomial} of $f$ and
     $g$ as
     \begin{displaymath}
       \spoly(f,g):=
       \frac{\lcm\big(\lm_>(f),\lm_>(g)\big)}{\lt_>(f)}\cdot f
       -
       \frac{\lcm\big(\lm_>(f),\lm_>(g)\big)}{\lt_>(g)}\cdot g.
     \end{displaymath}
   \end{definition}

   \begin{theorem}[Buchberger Criterion]\label{thm:buchbergercriterion}
     Let $>$ be a $\ut$-local monomial ordering on $\Mon^s(\ut,\ux)$,
     $J\leq R[\ux]_>^s$ a submodule and
     $g_1,\ldots,g_k\in J$. The following statements 
     are equivalent:
     \begin{enumerate}
     \item $G=(g_1,\ldots,g_k)$ is a standard basis of $J$.
     \item Every normal form with respect to $G$ of any element in $J$ is zero.
     \item Every element in $J$ has a  standard representation
       with respect to $G$.
     \item $J=\langle G\rangle$ and $\spoly(g_i,g_j)$ has a standard representation for
       all $i<j$.
     \end{enumerate}
   \end{theorem}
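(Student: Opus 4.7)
The plan is to prove the cycle of implications (a) $\Rightarrow$ (b) $\Rightarrow$ (c) $\Rightarrow$ (d) $\Rightarrow$ (a). The first three implications are essentially bookkeeping. For (a) $\Rightarrow$ (b): if $r$ is a normal form of some $f \in J$, then $r = f - \sum_i q_i g_i \in J$, so $\lm_>(r) \in L_>(J) = L_>(G)$, contradicting (ID2) unless $r = 0$. For (b) $\Rightarrow$ (c): apply Corollary \ref{cor:DwR} to obtain a division with remainder of any $f \in J$; by hypothesis the remainder vanishes, yielding a standard representation. For (c) $\Rightarrow$ (d): a standard representation of $f \in J$ exhibits $f \in \langle G \rangle$, so $J = \langle G \rangle$, and since $\spoly(g_i, g_j) \in J$, it has a standard representation by (c).

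The substantive implication is (d) $\Rightarrow$ (a), and I would defer its proof to Schreyer's Theorem (Section \ref{sec:schreyer}), which the introduction explicitly advertises as the route to Buchberger's Criterion. The idea is as follows: to show $L_>(J) \subseteq L_>(G)$, take $f \in J = \langle G \rangle$, write $f = \sum_i q_i g_i$, and set $m := \max_i \lm_>(q_i g_i)$. If $m = \lm_>(f)$, the maximizing index $i^*$ gives $\lm_>(f) = \lm_>(q_{i^*}) \cdot \lm_>(g_{i^*}) \in L_>(G)$. Otherwise cancellation among the leading terms of the summands $q_i g_i$ with $\lm_>(q_i g_i) = m$ produces a nontrivial syzygy of the $\lm_>(g_i)$. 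Schreyer's Theorem identifies generators of this syzygy module coming from the s-polynomial pairs; lifting such a generator via the hypothesised standard representations of the $\spoly(g_i, g_j)$ allows one to rewrite the representation $f = \sum q_i g_i$ so that either $m$ strictly decreases or the number of indices achieving $m$ decreases.

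The main obstacle is precisely why a direct ``choose a representation with $m$ minimal'' argument does not close in this setting: the $\ut$-local monomial ordering $>$ is not a well-ordering on $\Mon^s(\ut,\ux)$, so descent on $m$ is not automatically well-founded. This is what forces the detour through Schreyer's Theorem, which encodes the rewriting as a single finite computation in the manner of Algorithm \ref{alg:HDDwR}, using the $\langle t_1,\ldots,t_m\rangle$-adic convergence of Lemma \ref{lem:convergence} together with the weight-ordering approximation of Lemma \ref{lem:weightedorder} to ensure termination at the required level. Once Schreyer's Theorem is available, the lifting argument above runs cleanly and produces (a) from (d), closing the cycle.
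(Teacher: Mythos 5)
Your decomposition and reliance on Schreyer's Theorem mirror the paper's proof closely; the paper likewise disposes of (a)$\Rightarrow$(b) via Proposition \ref{prop:basicstd}, notes (b)$\Rightarrow$(c) and (c)$\Rightarrow$(d) as immediate, and postpones the hard direction to Theorem \ref{thm:schreyer}. The one organizational difference worth flagging: the paper proves (c)$\Rightarrow$(a) as a separate one-liner (a standard representation of $f$ gives $\lm_>(f)\in L_>(G)$, hence $L_>(J)=L_>(G)$) and then proves the hard direction as (d)$\Rightarrow$(c), precisely because the paper's proof of Schreyer's Theorem invokes the already-established implication ``(c)$\Rightarrow$(a)''. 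Your cycle (a)$\Rightarrow$(b)$\Rightarrow$(c)$\Rightarrow$(d)$\Rightarrow$(a) would thus be circular as written --- (c)$\Rightarrow$(a) becomes available only through (d)$\Rightarrow$(a), which needs Schreyer, which needs (c)$\Rightarrow$(a) --- but the repair is trivial: just record (c)$\Rightarrow$(a) independently, which is exactly the observation you already use at the start of your (d)$\Rightarrow$(a) sketch.
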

   \begin{proof}
     In Proposition \ref{prop:basicstd} we have shown that (a) implies
     (b), and the implication (b) to (c) is trivially true. And, finally, if
     $f\in J$ has a  standard representation with respect to $G$,
     then $\lm_>(f)\in L_>(G)$, so that (c) implies (a). Since
     $\spoly(g_i,g_j)\in J$ condition (d) follows from (c), and the
     hard part is to show that (d) implies actually (c). This is
     postponed to Theorem \ref{thm:schreyer}.
   \end{proof}

   Since for $G\subset R[\ux]^s$ we have $L_>\big(\langle
   G\rangle_{R[\ux]}\big)=L_>\big(\langle
   G\rangle_{R[\ux]_>}\big)$ we get the following corollary. 

   \begin{corollary}[Buchberger Criterion]\label{cor:buchbergercriterion}
     Let $>$ be a $\ut$-local monomial ordering on $\Mon^s(\ut,\ux)$ and
     $g_1,\ldots,g_k\in I\leq R[\ux]^s$. Then the following statements are
     equivalent:
     \begin{enumerate}
     \item $G=(g_1,\ldots,g_k)$ is a standard basis of $I$.
     \item Every weak normal form with respect to $G$ of any element in $I$ is zero.
     \item Every element in $I$ has a weak standard representation
       with respect to $G$.
     \item $\langle I\rangle_{R[\ux]_>}=\langle G\rangle_{R[\ux]_>}$
       and $\spoly(g_i,g_j)$ has a weak standard representation for 
       all $i<j$.
     \end{enumerate}
   \end{corollary}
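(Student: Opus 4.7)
The plan is to deduce this corollary directly from Theorem \ref{thm:buchbergercriterion} by passing to the localisation. Set $J := \langle I\rangle_{R[\ux]_>} \leq R[\ux]_>^s$. By Remark \ref{rem:leadingsubmodule} we have $L_>(I) = L_>(J)$, so a finite set $G \subseteq I$ satisfies $L_>(G) = L_>(I)$ if and only if $L_>(G) = L_>(J)$; that is, condition (a) of our corollary is literally condition (a) of Theorem \ref{thm:buchbergercriterion} applied to $J$.

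Next I would translate the remaining three conditions between the polynomial setting and the localised setting. The key observation is that multiplication by a unit $u \in S_>$ has $\lt_>(u) = 1$ and therefore leaves leading monomials unchanged, so it preserves conditions (ID1) and (ID2). Concretely: if $f \in I$ admits a weak standard representation $u \cdot f = \sum q_i g_i$ in $R[\ux]^s$ (with $q_i \in R[\ux]$ satisfying (ID1)), then dividing by $u$ yields the standard representation $f = \sum (q_i/u) \cdot g_i$ of $f$ in $R[\ux]_>^s$; conversely, given a standard representation $f = \sum Q_i \cdot g_i$ in $R[\ux]_>^s$, clearing the denominators of the $Q_i$ by a common $u \in S_>$ yields a weak standard representation in $R[\ux]^s$. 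The same passage shows that weak normal forms of elements of $I$ in $R[\ux]^s$ correspond to normal forms in $R[\ux]_>^s$ of the corresponding elements of $J$. This yields the equivalences (b)$_{\text{cor}} \Leftrightarrow$ (b)$_{\text{thm}}$ and (c)$_{\text{cor}} \Leftrightarrow$ (c)$_{\text{thm}}$.

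For (d), the s-polynomials $\spoly(g_i,g_j)$ already lie in $R[\ux]^s$, and the same bijection between weak standard representations in $R[\ux]^s$ and standard representations in $R[\ux]_>^s$ applies to them. The equation $\langle I\rangle_{R[\ux]_>} = \langle G\rangle_{R[\ux]_>}$ is by definition $J = \langle G\rangle_{R[\ux]_>}$, which is exactly condition (d) of the theorem. Assembling the three paragraphs: applying Theorem \ref{thm:buchbergercriterion} to $J$ and transferring each of (a)--(d) through these translations yields the desired equivalence.

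The only mildly subtle step is the routine verification in paragraph two that the division with remainder conditions are preserved under multiplication and division by elements of $S_>$; this is immediate from $\lt_>(u) = \lt_>(1/u) = 1$ together with the multiplicativity of $\lm_>$, so there is no real obstacle.
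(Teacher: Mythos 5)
Your proof is correct and follows essentially the same route as the paper's own proof: reduce to Theorem~\ref{thm:buchbergercriterion} applied to $J = \langle I\rangle_{R[\ux]_>}$ using $L_>(I) = L_>(J)$ from Remark~\ref{rem:leadingsubmodule}, and translate divisions back and forth by multiplying or dividing through elements of $S_>$, which fix leading monomials. One point worth making explicit: conditions (b) and (c) of the theorem quantify over \emph{all} of $J$, not merely over $I$, so for the directions $(b)_{\text{cor}} \Rightarrow (b)_{\text{thm}}$ and $(c)_{\text{cor}} \Rightarrow (c)_{\text{thm}}$ you should also write a general $f \in J$ as $f'/u'$ with $f' \in I$ and $u' \in S_>$, obtain the weak (division or) standard representation $u\cdot f' = \sum q_i g_i$ of $f'$, and then divide by $u\cdot u'$; this extra unit $u'$ is absorbed by exactly the $\lt_>(\,\cdot\,) = 1$ observation you already isolate, so the gap is harmless, but the paper's proof does spell it out.
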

   \lang{
   \begin{proof}
     If $G$ is a standard basis of $I$ then it is a standard
     basis of $J=\langle I\rangle_{R[\ux]_>}$, since $L_>(I)=L_>(J)$
     by Remark \ref{rem:leadingsubmodule}. Suppose now that
     $f\in I$ has a weak division with remainder 
     \begin{displaymath}
       u\cdot f=q_1\cdot g_1+\ldots+q_k\cdot g_k+r,
     \end{displaymath}
     then 
     \begin{displaymath}
       f=\frac{q_1}{u}\cdot g_1+\ldots+\frac{q_k}{u}\cdot
       g_k+\frac{1}{u}\cdot r
     \end{displaymath}
     is a division with remainder of $f\in I\subseteq J$, and thus
     $r=0$ by Theorem \ref{thm:buchbergercriterion}. Therefore
     (a) implies (b), and it is obvious that (b) implies (c).

     Moreover, if (c) holds and 
     $f=\frac{f'}{u'}\in J$ with $f'\in I$ and $u'\in S_>$ then by
     assumption there exists a weak standard representation
     \begin{displaymath}
       u\cdot f'=q_1\cdot g_1+\ldots+q_k\cdot g_k
     \end{displaymath}
     with $u\in S_>$ and $q_1,\ldots,q_k\in R[\ux]$. But then
     \begin{displaymath}
       f=\frac{q_1}{u\cdot u'}\cdot g_1+\ldots+\frac{q_k}{u\cdot
         u'}\cdot g_k
     \end{displaymath}
     is a standard representation of $f$, and Theorem
     \ref{thm:buchbergercriterion} implies that $G$ generates $J$ and
     that for each $i<j$ there is standard representation
     \begin{displaymath}
       \spoly(g_i,g_j)=\frac{q_1}{u_1}\cdot g_1+\ldots+\frac{q_k}{u_1}\cdot
       g_k
     \end{displaymath}
     with $q_i\in R[\ux]$ and $u_1,\ldots,u_k\in S_>$. Setting
     $u=u_1\cdots u_k\in S_>$
     and $q_i'=\frac{q_i\cdot u}{u_i}\in R[\ux]$ we get the weak
     standard representation
     \begin{displaymath}
       u\cdot \spoly(g_i,g_j)=q_1'\cdot g_1+\ldots+q_k'\cdot g_k,
     \end{displaymath}
     which shows that (d) holds true. 
     
     Finally, if (d) holds true then every weak standard
     representation
     \begin{displaymath}
       u\cdot \spoly(g_i,g_j)=q_1\cdot g_1+\ldots+q_k\cdot g_k,
     \end{displaymath}
     gives rise to a standard representation
     \begin{displaymath}
       \spoly(g_i,g_j)=\frac{q_1}{u}\cdot g_1+\ldots+\frac{q_k}{u}\cdot
       g_k,
     \end{displaymath}
     so that by Theorem \ref{thm:buchbergercriterion} $G$ is a
     standard basis of $J$. But by Remark \ref{rem:leadingsubmodule}
     $L_>(I)=L_>(J)$, so that $G$ is also a standard basis of $I$.
   \end{proof}
   }

   When working with polynomials in $\ux$ as well as in $\ut$ we can
   actually compute divisions with remainder and standard bases (see
   Remark \ref{rem:DwR}), and they are also standard bases of the
   corresponding submodules considered over $R[\ux]$ by the following
   corollary. 

   \begin{corollary}\label{cor:polynomialcase}
     Let $>$ be a $\ut$-local monomial ordering on $\Mon^s(\ut,\ux)$
     and let $G\subset K[\ut,\ux]^s$ be finite. Then $G$ is a standard
     basis of $\langle G\rangle_{K[\ut,\ux]}$ if and only if $G$ is a
     standard basis of $\langle G\rangle_{R[\ux]}$.
   \end{corollary}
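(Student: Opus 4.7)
The plan is to derive both implications from Buchberger's Criterion (Corollary~\ref{cor:buchbergercriterion}), exploiting only the tautological chain $G\subseteq\langle G\rangle_{K[\ut,\ux]}\subseteq\langle G\rangle_{R[\ux]}$. Writing $I_{\mathrm{poly}}=\langle G\rangle_{K[\ut,\ux]}$ and $I_R=\langle G\rangle_{R[\ux]}$, this chain immediately yields
\begin{displaymath}
  L_>(G)\;\subseteq\;L_>(I_{\mathrm{poly}})\;\subseteq\;L_>(I_R)
\end{displaymath}
inside $K[\ut,\ux]^s$. The direction ``$G$ standard basis over $R[\ux]$ implies $G$ standard basis over $K[\ut,\ux]$'' is then free of charge: if $L_>(G)=L_>(I_R)$ the chain collapses and in particular $L_>(G)=L_>(I_{\mathrm{poly}})$.

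For the converse I would run Buchberger's Criterion twice. Assume $G$ is a standard basis of $I_{\mathrm{poly}}$, and apply Corollary~\ref{cor:buchbergercriterion} in the purely polynomial ring $K[\ut,\ux]^s$ (the case $m=0$ of the paper's framework, obtained by relabelling the $t_i$ as additional $x$-variables, as indicated in Remark~\ref{rem:DwR}). Each s-polynomial $\spoly(g_i,g_j)\in K[\ut,\ux]^s$ then admits a polynomial weak standard representation
\begin{displaymath}
  u\cdot\spoly(g_i,g_j)=q_1\cdot g_1+\ldots+q_k\cdot g_k
\end{displaymath}
with $u,q_1,\ldots,q_k\in K[\ut,\ux]$ and $\lt_>(u)=1$. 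Because $K[\ut,\ux]\subseteq R[\ux]$ and the condition $\lt_>(u)=1$ is independent of the ambient ring, this very identity is also a weak standard representation of $\spoly(g_i,g_j)$ over $R[\ux]$. A second application of Corollary~\ref{cor:buchbergercriterion}, this time in $R[\ux]$, then yields that $G$ is a standard basis of $I_R$.

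The only point requiring any verification along the way is that the s-polynomials themselves lie in $K[\ut,\ux]^s$, but this is clear since leading monomials, leading terms and least common multiples of polynomial elements are again polynomial. There is no serious obstacle here: once Buchberger's Criterion has been established in the form of Corollary~\ref{cor:buchbergercriterion}, the present corollary is a formal consequence of it, together with the observation that a polynomial weak standard representation qualifies \emph{a fortiori} as one over the larger ring $R[\ux]$.
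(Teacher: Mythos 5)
Your proof is correct and relies on the same key facts as the paper's argument: the polynomial weak divisions guaranteed by Remark~\ref{rem:DwR} and Buchberger's Criterion (Corollary~\ref{cor:buchbergercriterion}) applied in both $K[\ut,\ux]^s$ and $R[\ux]^s$. The only cosmetic difference is that the paper fixes one polynomial weak division per s-polynomial and argues both directions symmetrically (each ring's standard-basis condition being equivalent to the vanishing of all these fixed remainders), whereas you dispatch the easy direction via the chain $L_>(G)\subseteq L_>\big(\langle G\rangle_{K[\ut,\ux]}\big)\subseteq L_>\big(\langle G\rangle_{R[\ux]}\big)$ and invoke Buchberger only for the converse.
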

   \begin{proof}
     Let $G=(g_1,\ldots,g_k)$. By Theorem \ref{thm:DwR} and Remark
     \ref{rem:DwR} each $\spoly(g_i,g_j)$ has a weak division with
     remainder with respect to $G$ such that the coefficients and
     remainders involved are polynomials in $\ux$ as well as in
     $\ut$. But by Corollary \ref{cor:buchbergercriterion} $G$ is a standard basis of
     either of $\langle G\rangle_{K[\ut,\ux]}$ and  $\langle
     G\rangle_{R[\ux]}$ if and only if all these remainders are
     actually zero.
   \end{proof}

   And thus it makes sense to formulate the classical standard basis
   algorithm also for the case $R[\ux]$.

   \begin{algorithm}[$\STD$ -- Standard Basis Algorithm]\label{alg:std}
     \textsc{Input:} 
     \begin{minipage}[t]{11cm}
       $(f_1,\ldots,f_k)\in \big(R[\ux]^s\big)^k$ and $>$ a $\ut$-local monomial ordering.
     \end{minipage}
     \\[0.2cm]
     \textsc{Output:} 
     \begin{minipage}[t]{11cm}
       $(f_1,\ldots,f_l)\in \big(R[\ux]^s\big)^l$ a standard basis of
       $\langle f_1,\ldots,f_k\rangle_{R[\ux]}$.
     \end{minipage}
     \\[0.2cm]
     \textsc{Instructions:}
     \begin{itemize}
     \item $G=(f_1,\ldots,f_k)$
     \item $P=\big((f_i,f_j)\;\big|\;1\leq i<j\leq k\big)$       
     \item WHILE $P\not=\emptyset$ DO
       \begin{itemize}
       \item Choose some pair $(f,g)\in P$
       \item $P=P\setminus \{(f,g)\}$
       \item $(u,\underline{q},r)=\DwR\big(\spoly(f,g),G)$
       \item IF $r\not=0$ THEN
         \begin{itemize}
         \item $P=P\cup \{(f,r)\;|\;f\in G\}$
         \item $G=G\cup \{r\}$
         \end{itemize}
       \end{itemize}
     \end{itemize}     
   \end{algorithm}
   \lang{
   \begin{proof}
     Since in each step when $G$ is enlarged the leading module of $G$
     is strictly enlarged and since $K[\ut,\ux]^s$ is noetherian the
     algorithm will terminate. Moreover, by Buchberger's Criterion $G$
     will be a standard basis.
   \end{proof}
   }

   \begin{remark}\label{rem:polynomialcase}
     If the input of $\STD$ are polynomials in $K[\ut,\ux]$ then the
     algorithm works in practise due to Remark \ref{rem:DwR}, and it
     computes a standard basis $G$ of $\langle
     f_1,\ldots,f_k\rangle_{K[\ut,\ux]}$ which due to Corollary
     \ref{cor:polynomialcase} is also a standard basis of $\langle
     f_1,\ldots,f_k\rangle_{R[\ux]}$, since $G$ still contains the
     generators $f_1,\ldots,f_k$.
     \kurz{

     Having division with remainder, standard bases and Buchberger's
     Criterion at hand one can, from a theoretical point of view,
     basically derive all the standard algorithms from computer algebra
     also for free modules over
     $R[\ux]$ respectively $R[\ux]_>$. Moreover, if the input is
     polynomial in $\ut$ and $\ux$, then the corresponding operations
     computed over $K[\ut,\ux]_>$ will
     also lead to generating sets for the corresponding operations   
     over $R[\ux]_>$.
     }
   \end{remark}


   \section{Schreyer's Theorem for $K[[t_1,\ldots,t_m]][x_1,\ldots,x_n]^s$}\label{sec:schreyer}

   In this section we want to prove Schreyer's Theorem for
   $R[\ux]^s$ which proves Buchberger's
   Criterion and shows at the same time that a standard basis of a
   submodule gives rise to a standard basis of the syzygy module
   defined by it with respect to a special ordering.
   
   \begin{definition}[Schreyer Ordering]
     Let $>$ be a $\ut$-local monomial ordering on $\Mon^s(\ut,\ux)$
     and $g_1,\ldots,g_k\in R[\ux]_>^s$. We define a \emph{Schreyer
       ordering} with respect to $>$ and $(g_1,\ldots,g_k)$, say
     $>_S$, on $\Mon^k(\ut,\ux)$ by 
     \begin{displaymath}
       \ut^\alpha\cdot\ux^\beta\cdot \varepsilon_i\;>_S\;
       \ut^{\alpha'}\cdot\ux^{\beta'}\cdot \varepsilon_j
     \end{displaymath}
     if and only if 
     \begin{displaymath}
       \ut^\alpha\cdot\ux^\beta\cdot\lm_>(g_i)\;>\;
       \ut^{\alpha'}\cdot\ux^{\beta'}\cdot\lm_>(g_j)
     \end{displaymath}
     or 
     \begin{displaymath}
       \ut^\alpha\cdot\ux^\beta\cdot\lm_>(g_i)\;=\;
       \ut^{\alpha'}\cdot\ux^{\beta'}\cdot\lm_>(g_j)
       \;\mbox{ and }\;
       i<j,
     \end{displaymath}
     where $\varepsilon_i=(\delta_{ij})_{j=1,\ldots,k}$ is the canonical basis
     with $i$-th entry one and the rest zero.

     Moreover, we define the \emph{syzygy module of $(g_1,\ldots,g_k)$}
     to be
     \begin{displaymath}
       \syz(g_1,\ldots,g_k):=
       \{(q_1,\ldots,q_k)\in R[\ux]_>^k\;|\;q_1\cdot g_1+\ldots+q_k\cdot
       g_k=0\},
     \end{displaymath}
     and we call the elements of $\syz(g_1,\ldots,g_k)$
     \emph{syzygies} of $g_1,\ldots,g_k$.
   \end{definition}

   \begin{remark}\label{rem:schreyer}
     Let $>$ be a $\ut$-local monomial ordering on $\Mon^s(\ut,\ux)$
     and $g_1,\ldots,g_k\in R[\ux]_>^s$. Let us fix for each $i<j$ a 
     division with remainder of $\spoly(g_i,g_j)$, say
     \begin{equation}\label{eq:schreyer:1}
       \spoly(g_i,g_j)=\sum_{\nu=1}^k q_{i,j,\nu}\cdot g_\nu+r_{ij},
     \end{equation}
     and define
     \begin{displaymath}
       m_{ji}:=\frac{\lcm\big(\lm_>(g_i),\lm_>(g_j)\big)}{\lm_>(g_i)},
     \end{displaymath}
     so that 
     \begin{displaymath}
       \spoly(g_i,g_j)=\frac{m_{ji}}{\lc_>(g_i)}\cdot g_i-\frac{m_{ij}}{\lc_>(g_j)}\cdot g_j.
     \end{displaymath}
     Then
     \begin{displaymath}
       s_{ij}:=\frac{m_{ji}}{\lc_>(g_i)}\cdot\varepsilon_i-
       \frac{m_{ij}}{\lc_>(g_j)}\cdot\varepsilon_j-
       \sum_{\nu=1}^k q_{i,j,\nu}\cdot \varepsilon_\nu\in R[\ux]_>^k
     \end{displaymath}
     has the property
     \begin{displaymath}
       s_{ij}\in\syz(g_1,\ldots,g_k)\;\;\;\Longleftrightarrow\;\;\; r_{ij}=0.
     \end{displaymath}
     \lang{
     Moreover, the leading monomial of $s_{ij}$ with respect to the
     Schreyer ordering on $\Mon^k(\ut,\ux)$ induced by $>$ and
     $(g_1,\ldots,g_k)$ is
     \begin{equation}\label{eq:schreyer:2}
       \lm_{>_S}(s_{ij})=m_{ji}\cdot \varepsilon_i,
     \end{equation}
     since $\lm_>\big(\frac{m_{ji}}{\lc_>(g_i)}\cdot g_i\big)
     =\lm_>\big(\frac{m_{ij}}{\lc_>(g_j)}\cdot g_j\big)$ but $i<j$
     and since \eqref{eq:schreyer:1} satisfies (ID1).     
     }
   \end{remark}

   \begin{theorem}[Schreyer]\label{thm:schreyer}
     Let $>$ be a $\ut$-local monomial ordering on $\Mon^s(\ut,\ux)$,
     $g_1,\ldots,g_k\in R[\ux]_>^s$ and suppose that
     $\spoly(g_i,g_j)$ has a weak standard representation with respect
     to $G=(g_1,\ldots,g_k)$ for each $i<j$. 

     Then $G$ is a standard basis,
     and
     with the notation in
     Remark \ref{rem:schreyer}  $\{s_{ij}\;|\;i<j\}$ is a standard basis of
     $\syz(g_1,\ldots,g_k)$ with respect to $>_S$. 
   \end{theorem}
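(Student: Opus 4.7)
The plan is to establish the two conclusions in sequence, but to base both on the same cancellation principle: that whenever the ``leading terms'' of a combination $\sum h_i g_i$ do not match the leading monomial of the combination itself, there must be a non-trivial syzygy of $\lt_>(g_1),\ldots,\lt_>(g_k)$ hidden in the coefficients, and every such syzygy factors through a pair $\{i<j\}$ for which $\lm_>(g_j)$ divides an appropriate multiple of $\lcm(\lm_>(g_i),\lm_>(g_j))$.

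First I would prove that $G$ is a standard basis by showing that every $f\in\langle G\rangle_{R[\ux]_>}$ admits a standard representation; by Theorem \ref{thm:buchbergercriterion} this is exactly what is needed. Given any representation $f=\sum_{\nu=1}^k h_\nu g_\nu$ set $M:=\max_\nu\lm_>(h_\nu g_\nu)$. If $M=\lm_>(f)$ we are done, so assume $M>\lm_>(f)$. Let $I=\{\nu:\lm_>(h_\nu g_\nu)=M\}$ and let $i<j$ be the two smallest indices in $I$ (which exist because the leading terms of $h_\nu g_\nu$ for $\nu\in I$ must sum to zero modulo lower terms). Then $\lm_>(g_i)$ and $\lm_>(g_j)$ both divide $M$, so $\lcm\bigl(\lm_>(g_i),\lm_>(g_j)\bigr)$ divides $M$, and hence $M=\ut^\alpha\ux^\beta\cdot\lcm(\lm_>(g_i),\lm_>(g_j))\cdot e_\ell$ for suitable exponents. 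Using the hypothesis that $\spoly(g_i,g_j)=\sum_\nu q_{i,j,\nu}g_\nu$ is a weak standard representation (so $r_{ij}=0$), I would multiply this identity by the monomial $\ut^\alpha\ux^\beta$ and substitute it for $\frac{m_{ji}}{\lc_>(g_i)}\cdot g_i-\frac{m_{ij}}{\lc_>(g_j)}\cdot g_j$ inside $f=\sum h_\nu g_\nu$, after first isolating the offending leading terms of $h_i g_i$ and $h_j g_j$. This substitution strictly decreases either $M$, or (if $M$ is unchanged) the number $\#I$ of indices realising it, and crucially preserves the bound $\lm_>(q_{i,j,\nu}g_\nu)\leq\lm_>(\spoly(g_i,g_j))<\lcm(\lm_>(g_i),\lm_>(g_j))$ coming from (ID1) for the standard representation.

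To organise the termination I would invoke Lemma \ref{lem:weightedorder} and Lemma \ref{lem:convergence}: pass to a $\ut$-local weight ordering $>_w$ which agrees with $>$ on the finitely many leading monomials of $g_1,\ldots,g_k$ and of the $q_{i,j,\nu}g_\nu$ involved in the finitely many fixed standard representations. Since $>_w$ is induced by a weight vector in $\Z_{<0}^m\times\Z^{n+s}$, the sequence of values $M$ produced by iterating the reduction cannot descend infinitely (Lemma \ref{lem:convergence} forces convergence to zero in the $\langle\ut\rangle$-adic topology, while on a fixed $\ux$-degree there are only finitely many relevant $\ut$-exponents above any given $\ut$-adic level). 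After finitely many substitutions we reach $M=\lm_>(f)$, yielding the desired standard representation.

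For the second assertion, let $\sigma=(h_1,\ldots,h_k)\in\syz(g_1,\ldots,g_k)$ be non-zero and let $\lm_{>_S}(\sigma)=\ut^\alpha\ux^\beta\cdot\varepsilon_i$. By the definition of $>_S$, among the indices $\nu$ with $\lm_>(h_\nu)\cdot\lm_>(g_\nu)=\ut^\alpha\ux^\beta\cdot\lm_>(g_i)$ the index $i$ is the smallest. Since $\sum_\nu h_\nu g_\nu=0$, the leading term $\lt_>(h_i)\lt_>(g_i)$ must cancel against some $\lt_>(h_j)\lt_>(g_j)$, forcing a $j>i$ with $\lm_>(h_j)\lm_>(g_j)=\ut^\alpha\ux^\beta\lm_>(g_i)$. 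Then $\lm_>(g_j)$ divides $\ut^\alpha\ux^\beta\lm_>(g_i)$, so $\lcm(\lm_>(g_i),\lm_>(g_j))$ divides $\ut^\alpha\ux^\beta\lm_>(g_i)$, and dividing by $\lm_>(g_i)$ one obtains $m_{ji}\mid\ut^\alpha\ux^\beta$. Thus $\lm_{>_S}(s_{ij})=m_{ji}\varepsilon_i$ divides $\lm_{>_S}(\sigma)$, proving that $\{s_{ij}\mid i<j\}$ is a standard basis of $\syz(g_1,\ldots,g_k)$ with respect to $>_S$.

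The main obstacle is the termination of the reduction procedure in the first half: unlike the classical polynomial case one cannot invoke a direct Noetherian argument on $K[\ut,\ux]^s$ because the intermediate coefficients $h_\nu$ live in $R[\ux]_>$ and the monomials $M$ form only a totally ordered, not Noetherian, set. The workaround is precisely the reduction to a weight ordering on a finite set of relevant monomials via Lemma \ref{lem:weightedorder}, which allows the $\ut$-adic convergence of Lemma \ref{lem:convergence} to do the work of well-foundedness.
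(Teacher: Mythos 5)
Your proposal takes a genuinely different route from the paper, and the difference matters precisely at the point you yourself flag as ``the main obstacle.''

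The paper's proof is a single, non-iterative application of the already-established Corollary~\ref{cor:DwR}: given $f=\sum q_\nu g_\nu$ with $q=(q_1,\ldots,q_k)\in R[\ux]_>^k$, one divides the \emph{coefficient vector} $q$ once by $(s_{ij})_{i<j}$ with respect to $>_S$, getting $q=\sum a_{ij}s_{ij}+r$ satisfying (ID1) and (SID2); then $f=\phi(q)=\phi(r)=\sum r_\nu g_\nu$ because the $s_{ij}$ are honest syzygies, and (SID2), translated into $m_{j\nu}\nmid r_\nu$, forces the $\lm_>(r_\nu)\cdot\lm_>(g_\nu)$ to be pairwise distinct, so no cancellation occurs and $\sum r_\nu g_\nu$ is a standard representation. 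All of the $\langle\ut\rangle$-adic convergence lives inside Corollary~\ref{cor:DwR} and is never reopened; the syzygy statement falls out of the same computation by observing that $\phi(q)=0$ forces $r=0$.

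Your first half instead re-runs the classical Buchberger rewriting argument, and the termination step is a genuine gap. Lemma~\ref{lem:convergence} applies to sequences of $\ux$-homogeneous elements of \emph{fixed} $\ux$-degree, but in your reduction the substituted pieces $\ut^\alpha\ux^\beta\,\frac{q_{i,j,\nu}}{u_{ij}}\,g_\nu$ have leading monomials controlled only by $>$, not by $\ux$-degree; since $>$ is merely $\ut$-local, the $\ux$-degree of the maximal monomial $M$ can increase while $M$ itself decreases. Passing to a weight ordering via Lemma~\ref{lem:weightedorder} agreeing with $>$ on a fixed finite set of monomials doesn't help either, because the monomials $M$ produced by iterating the reduction are not drawn from any fixed finite set, and the weight values can decrease through an infinite sequence without reaching $\lm_>(f)$ in finitely many steps. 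To make this rigorous you would essentially have to reproduce the analytic bookkeeping of Theorems~\ref{thm:HDDwR}--\ref{thm:DwR}, which is exactly what the paper avoids by dividing $q$ directly. Your second half — the leading-monomial argument showing $m_{ji}\varepsilon_i\mid\lm_{>_S}(\sigma)$ for a nonzero syzygy $\sigma$ — is correct and is in fact the same observation the paper encodes as equation~\eqref{eq:schreyer:2} and the implication ``$\phi(q)=0\Rightarrow r=0$,'' but in the paper it comes for free from the same single division, whereas in your version the two halves are separate arguments and the first is incomplete.
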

   \begin{proof}
     \kurz{The same as in \cite[Theorem 2.5.9]{GP02}.}
     \lang{
     Let $I=\langle G\rangle_{R[\ux]_>}$ and consider the
     $R[\ux]_>$-linear map
     \begin{displaymath}
       \phi:R[\ux]_>^k\longrightarrow
       R[\ux]_>^s:(q_1,\ldots,q_k)
       \mapsto q_1\cdot g_1+\ldots+q_k\cdot g_k.
     \end{displaymath}
     For $f\in I$ there is a $q:=(q_1,\ldots,q_k)\in R[\ux]_>^k$ such
     that $f=\sum_{i=1}^kq_i\cdot g_i$, and by Corollary \ref{cor:DwR}
     there is a  division with remainder of $q$ with respect to
     $(s_{ij}\;|\;i<j)$ and $>_S$, say 
     \begin{equation}\label{eq:schreyer:3}
       q=\sum_{i<j} a_{ij}\cdot s_{ij}+r
     \end{equation}
     with $a_{ij}\in R[\ux]_>$ and
     $r=\sum_{\nu=1}^kr_\nu\cdot \varepsilon_\nu\in R[\ux]_>^k$, 
     which satisfies (ID1) and (SID2). By (SID2)
     \begin{displaymath}
       m_{ji}\cdot \varepsilon_i=\lm_{>_S}(s_{ij})\;\not\big|\;\lm_{>_S}(r_\nu\cdot\varepsilon_\nu)
     \end{displaymath}
     whenever $r_\nu\not=0$, and hence
     \begin{equation}\label{eq:schreyer:6}
       m_{j\nu}\;\not\big|\;r_\nu,
     \end{equation}
     whenever $r_\nu\not=0$. 

     Note that 
     \begin{equation}\label{eq:schreyer:4}
       f=\phi(q)=\phi(r)=\sum_{\nu=1}^k r_\nu\cdot g_\nu,
     \end{equation}
     since $s_{ij}\in\ker(\phi)$, and we claim that 
     \begin{equation}\label{eq:schreyer:5}
       \lm_>(f)\geq \lm_>(r_\nu\cdot g_\nu). 
     \end{equation}
     For this it suffices to show that 
     \begin{displaymath}
       \lm_>(r_\nu)\cdot \lm_>(g_\nu)\not=\lm_>(r_\mu)\cdot \lm_>(g_\mu)
     \end{displaymath}
     for $\nu<\mu$, whenever $r_\nu\not=0\not=r_\mu$. Suppose the contrary, then
     \begin{displaymath}
       0\not=m_{\mu\nu}\cdot\lm_>(g_\nu)=\lcm\big(\lm_>(g_\nu),\lm_>(g_\mu)\big)
     \end{displaymath}
     divides
     \begin{displaymath}
       \lm_>(r_\mu)\cdot \lm_>(g_\mu)=\lm_>(r_\nu)\cdot \lm_>(g_\nu),       
     \end{displaymath}
     since both $\lm_>(g_\nu)$ and $\lm_>(g_\mu)$ divide the latter.
     But this contradicts \eqref{eq:schreyer:6}.

     It follows from \eqref{eq:schreyer:5}
     that \eqref{eq:schreyer:4}  is a standard representation of $f$ with respect to
     $(g_1,\ldots,g_k)$ and $>$, and since $f\in I$ was arbitrary it
     follows from Theorem \ref{thm:buchbergercriterion}
     ``(c)$\Longrightarrow$(a)'', which we have already proved, that
     $G$ is actually a standard basis of $\langle
     G\rangle_{R[\ux]_>}$.
     
     Moreover, $q\in\syz(g_1,\ldots,g_k)$ if and only if $\phi(q)=f=0$, and
     by \eqref{eq:schreyer:4} and \eqref{eq:schreyer:5} this is the
     case if and only if $r=0$. Thus by \eqref{eq:schreyer:3} every
     element in $\syz(g_1,\ldots,g_k)$ has a standard
     representation  with respect to $\{s_{ij}\;|\;i<j\}$ and $>_S$,
     and therefore, as before, $\{s_{ij}\;|\;i<j\}$ is a standard
     basis of $\syz(g_1,\ldots,g_k)$ with respect to $>_S$ by 
     Theorem \ref{thm:buchbergercriterion}
     ``(c)$\Longrightarrow$(a)''. This finishes the proof.     
     }
   \end{proof}


   \lang{ 

   \section{Algorithms Relying on Standard Bases}\label{sec:algorithms}

   Having division with remainder, standard bases and Buchberger's
   Criterion at hand one can, from a theoretical point of view,
   basically derive all the standard results from computer algebra
   also for free modules over
   $R[\ux]$ respectively $R[\ux]_>$. We will gather here some of
   these results which are explicitly needed for the Lifting
   Algorithm for tropical varieties.

   The simplest algorithm is the one for testing submodule membership.

   \begin{algorithm}[$\MEMBERSHIP$]\label{alg:membership}
     \textsc{Input:} 
     \begin{minipage}[t]{11cm}
       $f,f_1,\ldots,f_k\in R[\ux]^s$ and $>$ a
       $\ut$-local monomial ordering.
     \end{minipage}
     \\[0.2cm]
     \textsc{Output:} 
     \begin{minipage}[t]{11cm}
       $N$, where $N=1$ if $f\in \langle f_1,\ldots,f_k\rangle_{R[\ux]_>}$, and $N=0$
       else.
     \end{minipage}
     \\[0.2cm]
     \textsc{Instructions:}
     \begin{itemize}
     \item $(u,\underline{q},r)=\DwR\big(f,(f_1,\ldots,f_k),>\big)$
     \item IF $r=0$ THEN $N=1$ ELSE $N=0$
     \end{itemize}          
   \end{algorithm}

   In order to do more complicated computations one needs elimination.

   \begin{definition}
     Divide the variables $\ux=(x_1,\ldots,x_n)$ into two disjoint
     subsets $\ux_0$ and $\ux_1$. We call a $\ut$-local monomial
     ordering $>$ on $\Mon^s(\ut,\ux)$ an \emph{elimination ordering
     with respect to $\ux_1$} if for $f\in R[\ux]$
     \begin{displaymath}
       \lm_>(f)\in K[\ut,\ux_1]^s\;\;\;\Longrightarrow\;\;\;
       f\in R[\ux_1]^s.
     \end{displaymath}
     Typical examples of elimination orderings are \emph{block
       orderings} like the one defined by
     \begin{displaymath}
       \ut^\alpha\cdot {\ux_0}^{\beta}\cdot{\ux_1}^\gamma\cdot e_i
       \;>\;
       \ut^{\alpha'}\cdot {\ux_0}^{\beta'}\cdot{\ux_1}^{\gamma'}\cdot e_j      
     \end{displaymath}
     if and only if
     \begin{displaymath}
       {\ux_0}^{\beta}\;>_0\;{\ux_0}^{\beta'}
     \end{displaymath}
     or 
     \begin{displaymath}
       {\ux_0}^{\beta}\;=\;{\ux_0}^{\beta'}\;\mbox{ and }\;
       \ut^\alpha\cdot{\ux_1}^\gamma\cdot e_i\;>_1\;
       \ut^{\alpha'}\cdot{\ux_1}^{\gamma'}\cdot e_j,
     \end{displaymath}
     where $>_0$ is a global monomial ordering on $\Mon(\ux_0)$ and
     $>_1$ is a $\ut$-local monomial ordering on
     $\Mon^s(\ut,\ux_1)$. Denote $>$ by $(>_0,>_1)$.
   \end{definition}

   \begin{proposition}
     Let $>$ be a $\ut$-local elimination
     ordering with respect to $\ux_0$ on $\Mon^s(\ut,\ux)$,
     $I\leq R[\ux]_>^s$, and $G$
     be a standard basis of $I$ with respect to $>$.

     Then $\big(g\in G\;\big|\;\lm_>(g)\in K[t,\ux\setminus\ux_0]^s\big)$ is a standard basis of
     $I\cap R[\ux\setminus\ux_0]_>^s$.
   \end{proposition}
   \begin{proof}
     $G'=\big(g\in G\;\big|\;\lm_>(g)\in K[t,\ux\setminus\ux_0]^s\big)$ 
     is contained in $I\cap R[\ux\setminus\ux_0]^s$ since $>$ is an 
     elimination ordering with respect to
     $\ux_0$. Moreover, if $f\in I\cap R[\ux\setminus\ux_0]_>^s\subseteq I$ then there
     is there is a $g\in G$ such that
     $\lm_>(g)\;|\;\lm_>(f)\in K[t,\ux\setminus\ux_0]^s$, since $G$ is a standard
     basis of $I$. However, this forces $\lm_>(g)\in K[t,\ux\setminus\ux_0]^s$ and
     thus $g\in G'$. This shows that $G'$ is a standard basis of
     $I\cap R[\ux\setminus\ux_0]_>^s$.
   \end{proof}

   This leads to the following elimination algorithm.

   \begin{algorithm}[$\ELIMINATE$]\label{alg:eliminate}
     \textsc{Input:} 
     \begin{minipage}[t]{11cm}
       $f_1,\ldots,f_k\in R[\ux]_>^s$, $\ux_0\subseteq \ux$, and $>$ a
       $\ut$-local elimination ordering with respect to $\ux_0$.
     \end{minipage}
     \\[0.2cm]
     \textsc{Output:} 
     \begin{minipage}[t]{11cm}
       $G\subset R[\ux\setminus\ux_0]^s$ a standard basis of
       $\langle f_1,\ldots,f_k\rangle_{R[\ux]_>}\cap R[\ux\setminus\ux_0]_>$.
     \end{minipage}
     \\[0.2cm]
     \textsc{Instructions:}
     \begin{itemize}
     \item $G'=\STD(f_1,\ldots,f_k,>)$
     \item $G=\big(g\in G\;\big|\;\lm_>(g)\in K[\ut,\ux\setminus\ux_0]^s\big)$
     \end{itemize}     
   \end{algorithm}

   \begin{proposition}
     Let $>$ be a $\ut$-local
     monomial ordering on $\Mon^s(\ut,\ux)$,
     and let $I=\langle f_1,\ldots,f_k\rangle, J=\langle
     g_1,\ldots,g_l\rangle\leq R[\ux]_>^s$, then
     \begin{displaymath}
       I\cap J=\langle \tau\cdot f_1,\ldots,\tau\cdot f_k,(1-\tau)\cdot
       g_1,\ldots,(1-\tau)\cdot g_l\rangle_{R[\ux]_>[\tau]}\cap R[\ux]_>^s. 
     \end{displaymath}
   \end{proposition}
   \begin{proof}
     If $f=\sum_{i=1}^ka_i\cdot f_i=\sum_{j=1}^lb_j\cdot g_j$ with
     $a_i,b_j\in R[\ux]_>$ then
     \begin{displaymath}
       f=\tau\cdot f+(1-\tau)\cdot f=\sum_{i=1}^ka_i\cdot \tau\cdot f_i
       +\sum_{j=1}^lb_j\cdot (1-\tau)\cdot g_j
     \end{displaymath}
     is in the right hand side. Conversely, if 
     \begin{displaymath}
       f=\sum_{i=1}^ka_i\cdot \tau\cdot f_i
       +\sum_{j=1}^lb_j\cdot (1-\tau)\cdot g_j
     \end{displaymath}
     is in the right hand side with $a_i,b_j\in R[\ux]_>[\tau]$, then
     \begin{displaymath}
       f=f_{|\tau=0}=\sum_{i=1}^k a_{i|\tau=0}\cdot f_i\in I
     \end{displaymath}
     and
     \begin{displaymath}
       f=f_{|\tau=1}=\sum_{j=1}^l b_{j|\tau=1}\cdot g_j\in J,
     \end{displaymath}
     so that $f\in I\cap J$.
   \end{proof}
   
   This leads to the following algorithm for computing intersections
   of submodules.
   
   \begin{algorithm}[$\INTERSECTION$]\label{alg:intersection}
     \textsc{Input:} 
     \begin{minipage}[t]{11cm}
       $f_1,\ldots,f_k,g_1,\ldots,g_l\in R[\ux]^s$ and $>$ a
       $\ut$-local ordering.
     \end{minipage}
     \\[0.2cm]
     \textsc{Output:} 
     \begin{minipage}[t]{11cm}
       $G\subset R[\ux]^s$ a standard basis of
       $\langle f_1,\ldots,f_k\rangle_{R[\ux]_>}\cap
       \langle g_1,\ldots,g_l\rangle_{R[\ux]_>}$.
     \end{minipage}
     \\[0.2cm]
     \textsc{Instructions:}
     \begin{itemize}
     \item Let $>'=(>_0,>)$ be the block ordering with respect to the
       unique global ordering $>_0$ on $\Mon(\tau)$
     \item $G=\hspace*{-0.05cm}\ELIMINATE\Big(\big(\tau f_1,\ldots,\tau
       f_k,(1-\tau) g_1,\ldots,(1-\tau) g_l\big),\tau,>'\Big)$
     \end{itemize}          
   \end{algorithm}

   \begin{proposition}
     Let $>$ be a $\ut$-local monomial ordering on $\Mon^s(\ut,\ux)$,
     let $I=\langle f_1,\ldots,f_k\rangle \leq R[\ux]_>^s$ and $0\not=f\in
     R[\ux]_>^s$. 

     If $I\cap \langle f\rangle=\langle g_1\cdot
     f,\ldots,g_l\cdot f\rangle$, then $I:\langle f\rangle=\langle
     g_1,\ldots,g_l\rangle_{R[\ux]_>}$.
   \end{proposition}
   \begin{proof}
     It is clear that $g_1,\ldots,g_l\in I:\langle f\rangle$, and we
     may thus suppose that we have some $h\in I:\langle f\rangle\unlhd
     R[\ux]_>$. By assumption $h\cdot f\in I\cap \langle f\rangle$ and
     thus there are $a_1,\ldots,a_k\in R[\ux]_>$ such that
     \begin{displaymath}
       h\cdot f=\left(\sum_{i=1}^k a_i\cdot g_i\right)\cdot f,
     \end{displaymath}
     and since $R[\ux]_>$ has no zero divisors this implies
     $h=\sum_{i=1}^k a_i\cdot g_i$. 
   \end{proof}

   We thus get the following algorithm for computing the ideal
   quotient with respect to a single element.

   \begin{algorithm}[$\QUOTIENT$]\label{alg:quotient}
     \textsc{Input:} 
     \begin{minipage}[t]{11cm}
       $f_1,\ldots,f_k,f\in R[\ux]^s$ and $>$ a
       $\ut$-local monomial ordering.
     \end{minipage}
     \\[0.2cm]
     \textsc{Output:} 
     \begin{minipage}[t]{11cm}
       $G\subset R[\ux]$ a standard basis of
       $\langle f_1,\ldots,f_k\rangle_{R[\ux]_>}:\langle f\rangle_{R[\ux]_>}$.
     \end{minipage}
     \\[0.2cm]
     \textsc{Instructions:}
     \begin{itemize}
     \item $G'=\INTERSECTION\big((f_1,\ldots,f_k),(f)\big)$
     \item $G=\big(\frac{g}{f}\;\big|\;g\in G\big)$
     \end{itemize}          
   \end{algorithm}

   Finally, this leads to the following algorithm for computing the
   saturation with respect to a single element.

   \begin{algorithm}[$\SATURATION$]\label{alg:saturation}
     \textsc{Input:} 
     \begin{minipage}[t]{11cm}
       $f_1,\ldots,f_k,f\in R[\ux]^s$ and $>$ a
       $\ut$-local monomial ordering.
     \end{minipage}
     \\[0.2cm]
     \textsc{Output:} 
     \begin{minipage}[t]{11cm}
       $G\subset R[\ux]$ a standard basis of
       $\langle f_1,\ldots,f_k\rangle_{R[\ux]_>}:\langle f\rangle^\infty_{R[\ux]_>}$.
     \end{minipage}
     \\[0.2cm]
     \textsc{Instructions:}
     \begin{itemize}
     \item $N=0$
     \item WHILE $N=0$ DO
       \begin{itemize}
       \item $G'=\QUOTIENT\big(G,f,>\big)$
       \item Test using $\MEMBERSHIP$ if $G'\subset\langle G\rangle$.
       \item IF so THEN $N=1$ 
       \end{itemize}
     \end{itemize}          
   \end{algorithm}
   \begin{proof}
     The procedure constructs an ascending sequence of modules
     generated by the sets $G$, and since $R[\ux]_>$ is noetherian the
     algorithm must stop after a finite number of steps. Moreover,
     once the procedure stops then 
     \begin{displaymath}
       \langle G\rangle :\langle f\rangle =\langle G'\rangle =\langle G\rangle,
     \end{displaymath}
     which shows that $\langle G\rangle$ is actually saturated with
     respect to $f$.
   \end{proof}

   We will use the existence of these procedures at the end of the
   next section to show that generators for $\langle
   f_1,\ldots,f_k\rangle_{R[\ux]}:\langle t\rangle^\infty_{R[\ux]}$
   can be computed over $K[t,\ux]$ when the $f_i\in K[t,\ux]$ are
   polynomials. 

   }


   \section{Application to $t$-Initial Ideals}\label{sec:application}

   In this section we want to show that for an ideal $J$ over the 
   field of Puiseux series which is generated by elements in
   $K[[t^\frac{1}{N}]][\ux]$ respectively in $K[t^\frac{1}{N},\ux]$ the
   $t$-initial ideal (a notion we will introduce further down)
   with respect to $w\in \Q_{<0}\times\Q^n$   can be
   computed from a standard basis of the generators. 

   \begin{definition}
     We consider for $0\not=N\in \N$ the discrete valuation ring
     \begin{displaymath}
       R_N\big[\big[t^\frac{1}{N}\big]\big]=
       \left\{\sum_{\alpha=0}^\infty a_\alpha\cdot t^\frac{\alpha}{N}\;\big|\;a_\alpha\in K\right\}
     \end{displaymath}
     of power series in the unknown $t^\frac{1}{N}$
     with \emph{discrete valuation}
     \begin{displaymath}
       \val\left(\sum_{\alpha=0}^\infty a_\alpha\cdot
       t^\frac{\alpha}{N}\right)
       =\ord_t\left(\sum_{\alpha=0}^\infty a_\alpha\cdot
       t^\frac{\alpha}{N}\right)=
       \min\left\{\frac{\alpha}{N}\;\Big|\;a_\alpha\not=0\right\}\in\frac{1}{N}\cdot\Z,
     \end{displaymath}
     and we denote by
     \bmath
       L_N=\Quot(R_N)
     \emath
     its quotient field. If $N\;|\;M$ then
     in an obvious way we can think of $R_N$ as a subring of $R_M$, and
     thus of $L_N$ as a subfield of $L_M$.
     We call the direct limit of the corresponding direct system
     \begin{displaymath}
       L=K\{\{t\}\}=\lim_{\longrightarrow} L_N=\bigcup_{N\geq 0}L_N
     \end{displaymath}
     the \emph{field of (formal) Puiseux series} over $K$. 
   \end{definition}

   \begin{remark}
     If $0\not=N\in\N$ then
     \bmath
       S_N=\lang{\big}\{1,t^\frac{1}{N},t^\frac{2}{N},t^\frac{2}{N},\ldots\lang{\big}\}
     \emath
     is a multiplicative subset of $R_N$, and obviously
     \bmath
       L_N=S_N^{-1}R_N=\lang{\left}\{t^\frac{-\alpha}{N}\cdot f
         \;\lang{\bigg}|\;f\in R_N,\alpha\in\N\lang{\right}\},
     \emath
     since
     \bmath
       R_N^*=\lang{\left}\{\sum_{\alpha=0}^\infty a_\alpha\cdot t^\frac{\alpha}{N}
         \;\lang{\bigg}|\;a_0\not=0\lang{\right}\}.
     \emath
     The valuations of $R_N$ extend to $L_N$, and thus $L$, by
     \bmath
       \val\lang{\left}\kurz{\big}(\frac{f}{g}\lang{\right}\kurz{\big})=\val(f)-\val(g)
     \emath
     for $f,g\in R_N$ with $g\not=0$.
   \end{remark}

   \begin{definition}
     For $0\not=N\in\N$ if we consider $t^\frac{1}{N}$ as a variable,
     we get the set of monomials 
     \bmath
       \Mon\big(t^{\frac{1}{N}},\ux\big)=\left\{t^\frac{\alpha}{N}\cdot \ux^\beta\; \big|\;
           \alpha\in\N,\beta\in\N^n\right\}
     \emath
     in $t^\frac{1}{N}$ and $\ux$. If $N\;|\;M$ then obviously 
     \bmath
       \Mon\big(t^{\frac{1}{N}},\ux\big)\subset\Mon\big(t^{\frac{1}{M}},\ux\big).
     \emath
   \end{definition}

   \begin{remarkdefinition}
     Let $0\not=N\in\N$, $w=(w_0,\ldots,w_n)\in
     \R_{<0}\times\R^n$, and $q\in\R$.

     We may consider the direct product
       \begin{displaymath}
         V_{q,w,N}=\prod_{\tiny\begin{array}{c}(\alpha,\beta)\in\N^{n+1}
             \\w\cdot(\frac{\alpha}{N},\beta)=q\end{array}}
         K\cdot t^\frac{\alpha}{N}\cdot \ux^\beta
       \end{displaymath}
       of $K$-vector spaces and its subspace
       \begin{displaymath}
         W_{q,w,N}=\bigoplus_{\tiny\begin{array}{c}(\alpha,\beta)\in\N^{n+1}
             \\w\cdot(\frac{\alpha}{N},\beta)=q\end{array}} K\cdot
         t^\frac{\alpha}{N}\cdot \ux^\beta.
     \end{displaymath}
     As a $K$-vector space the formal power series ring $K\big[\big[t^\frac{1}{N},\ux\big]\big]$ is just
     \begin{displaymath}
       K\big[\big[t^\frac{1}{N},\ux\big]\big]=\prod_{q\in\R}V_{q,w,N},
     \end{displaymath}
     and we can thus write any power series $f\in K\big[\big[t^\frac{1}{N},\ux\big]\big]$ in a
     unique way as
     \begin{displaymath}
       f=\sum_{q\in\R}f_{q,w} \;\;\;\mbox{ with }\;\;\;
       f_{q,w}\in V_{q,w,N}.
     \end{displaymath}   
     Note that this representation is independent of $N$ in the sense
     that if $f\in K\big[\big[t^\frac{1}{N'},\ux\big]\big]$ for some other
     $0\not=N'\in\N$ then we get the same non-vanishing $f_{q,w}$ if
     we decompose $f$ with respect to $N'$. 
     
     Moreover, if $0\not=f\in R_N[\ux]\subset K\big[\big[t^\frac{1}{N},\ux\big]\big]$, then there is
     a maximal $\hat{q}\in \R$ such that $f_{\hat{q},w}\not=0$ and 
     \bmath
       f_{q,w}\in W_{q,w,N}\lang{\;\;\;}\mbox{ for all
       }\lang{\;\;\;}q\in\R,
     \emath
     since the $\ux$-degree of the monomials involved in $f$ is
     bounded. We call the elements $f_{q,w}$ 
     \emph{$w$-quasihomogeneous} of $w$-degree $\deg_w(f_{q,w})=q\in \R$, 
     \begin{displaymath}
       \IN_w(f)=f_{\hat{q},w}\in K\big[t^\frac{1}{N},\ux\big]
     \end{displaymath}
     the \emph{$w$-initial form} of $f$ or the \emph{initial form of $f$
       w.r.t.\ $w$}, and 
     \begin{displaymath}
       \ord_w(f)=\hat{q}=\max\{\deg_w(f_{q,w})\;|\;f_{q,w}\not=0\}
     \end{displaymath}
     the \emph{$w$-order} of $f$.
     \lang{

     }
     For $I\subseteq R_N[\ux]$ we call
     \begin{displaymath}
       \IN_w(I)=\big\langle \IN_w(f)\;\big|\;f\in I\big\rangle
       \unlhd K\big[t^\frac{1}{N},\ux\big]
     \end{displaymath}
     the \emph{$w$-initial ideal} of $I$. Note that its definition
     depends on $N$!

     Moreover, we call
     \begin{displaymath}
       \tin_w(f)=\IN_w(f)(1,\ux)=\IN_w(f)_{|t=1}\in K[\ux]
     \end{displaymath}
     the \emph{$t$-initial form of $f$ w.r.t.\ $w$}, and if
     $f=t^\frac{-\alpha}{N}\cdot g\in L[\ux]$ with $g\in
     R_N[\ux]$ we set
     \bmath
       \tin_w(f):=\tin_w(g).
     \emath
     This definition does not depend on the particular representation of
     $f$\kurz{.}\lang{, since $t^\frac{-\alpha}{N}\cdot g=t^\frac{-\alpha'}{N'}\cdot
     g'$ implies that $t^\frac{\alpha'}{N'}\cdot
     g=t^\frac{\alpha}{N}\cdot g'$ in $R_{N\cdot N'}$ and thus 
     \begin{displaymath}
       t^\frac{\alpha'}{N'}\cdot\IN_w(g)=\IN_w\big(t^\frac{\alpha'}{N'}\cdot
       g\big)=\IN_w\big(t^\frac{\alpha}{N}\cdot g'\big)=t^\frac{\alpha}{N}\cdot\IN_w(g'),
     \end{displaymath}
     which shows that $\tin_w(g)=\tin_w(g')$. 

     }
     If $I\subseteq L[\ux]$
     is an ideal, then 
     \begin{displaymath}
       \tin_w(I)=\langle \tin_w(f)\;|\;f\in I\rangle\lhd K[\ux]
     \end{displaymath}
     is the \emph{$t$-initial ideal} of $I$, which does not depend on
     any $N$.

     Note also that the product of two $w$-quasihomogeneous elements 
     \bmath
       f_{q,w}\cdot f_{q',w}\in V_{q+q',w,N},
     \emath
     \lang{
     and thus
     \begin{displaymath}
       W_{q,w,N}\cdot W_{q,w,N}
       \subseteq W_{q+q',w,N}
     \end{displaymath}
     and
     \begin{displaymath}
       V_{q,w,N}\cdot V_{q',w,N}\subseteq V_{q+q',w,N}.    
     \end{displaymath}
     I}\kurz{and i}n particular, 
     \bmath
       \IN_w(f\cdot g)=\IN_w(f)\cdot\IN_w(g)
     \emath
     for $f,g\in R_N[\ux]$, and for $f,g\in L[\ux]$
     \bmath
       \tin_w(f\cdot g)=\tin_w(f)\cdot\tin_w(g).
     \emath
     An immediate consequence of this is the following lemma.
   \end{remarkdefinition}

   \begin{lemma}\label{lem:initialform}
     If $0\not=f=\sum_{i=1}^k g_i\cdot h_i$ with $f,g_i,h_i\in R_N[\ux]$ and
     $\ord_w(f)\geq \ord_w(g_i\cdot h_i)$ for all $i=1,\ldots,k$, then
     \begin{displaymath}
       \IN_w(f)\in\big\langle \IN_w(g_1),\ldots,\IN_w(g_k)\big\rangle\lhd K\big[t^\frac{1}{N},\ux\big].
     \end{displaymath}
   \end{lemma}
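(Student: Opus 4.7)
The plan is to extract the $w$-quasihomogeneous component of $w$-degree $q := \ord_w(f)$ from both sides of $f = \sum_{i=1}^k g_i \cdot h_i$ and then recognise the result as an element of the ideal generated by the $\IN_w(g_i)$. First, I would decompose each $g_i = \sum_a (g_i)_{a,w}$ and $h_i = \sum_b (h_i)_{b,w}$ in the sense of the preceding remark-definition. Since the product of a $w$-quasihomogeneous piece of $w$-degree $a$ with one of $w$-degree $b$ lies in $V_{a+b,w,N}$, I obtain
\[
(g_i \cdot h_i)_{r,w} \;=\; \sum_{a+b=r} (g_i)_{a,w} \cdot (h_i)_{b,w}
\]
as a finite sum (here one uses that $\deg_{\ux}(g_i)$ and $\deg_{\ux}(h_i)$ are bounded, so only finitely many pieces contribute to any fixed $w$-degree). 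In particular this component vanishes whenever $r > \ord_w(g_i) + \ord_w(h_i) = \ord_w(g_i \cdot h_i)$, and for $r = \ord_w(g_i \cdot h_i)$ the only surviving summand is $\IN_w(g_i) \cdot \IN_w(h_i)$.

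Next, I would apply the $K$-linear projection $p \mapsto p_{q,w}$ to the equation $f = \sum_i g_i \cdot h_i$. The left-hand side produces $f_{q,w} = \IN_w(f)$ by the choice of $q$, while on the right-hand side the hypothesis $\ord_w(g_i \cdot h_i) \leq q$ ensures that $(g_i \cdot h_i)_{q,w} = 0$ unless $\ord_w(g_i \cdot h_i) = q$, in which case it equals $\IN_w(g_i) \cdot \IN_w(h_i)$. Collecting the surviving summands yields
\[
\IN_w(f) \;=\; \sum_{\substack{i=1,\ldots,k \\ \ord_w(g_i \cdot h_i) = q}} \IN_w(g_i) \cdot \IN_w(h_i),
\]
and the right-hand side manifestly lies in $\big\langle \IN_w(g_1), \ldots, \IN_w(g_k)\big\rangle \lhd K\big[t^{\frac{1}{N}},\ux\big]$.

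There is essentially no hard step here: the only point requiring mild care is that $w$-degree decomposition behaves well on $R_N[\ux]$ even though $R_N$ is an infinite product of one-dimensional $K$-vector spaces. This is handled by the observation already recorded before the lemma, namely that because $w_0 < 0$ and the $\ux$-degree of any element of $R_N[\ux]$ is bounded, each component $f_{q,w}$ is in fact a finite sum and lies in $W_{q,w,N}$, so that the projections $p \mapsto p_{q,w}$ are well-defined $K$-linear maps and the multiplication rule $V_{a,w,N} \cdot V_{b,w,N} \subseteq V_{a+b,w,N}$ used above actually restricts to $W_{a,w,N} \cdot W_{b,w,N} \subseteq W_{a+b,w,N}$ on the relevant components.
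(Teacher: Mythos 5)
Your proof is correct and follows essentially the same route as the paper: project the identity $f=\sum_i g_i\cdot h_i$ onto the $w$-quasihomogeneous component of $w$-degree $\hat q=\ord_w(f)$ and observe that each $(g_i\cdot h_i)_{\hat q,w}$ is either zero or $\IN_w(g_i)\cdot\IN_w(h_i)$. You spell out the convolution formula for $(g_i\cdot h_i)_{r,w}$ which the paper leaves implicit, but the argument is the same.
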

   \begin{proof}
     Due to the direct product decomposition we have that
     \begin{displaymath}
       \IN_w(f)=f_{\hat{q},w}=\sum_{i=1}^k (g_i\cdot h_i)_{\hat{q},w}
     \end{displaymath}
     where $\hat{q}=\ord_w(f)$. By assumption $\ord_w(g_i)+\ord_w(h_i)=\ord_w(g_i\cdot
     h_i)\leq\ord_w(f)=\hat{q}$ with equality if
     and only if $(g_i\cdot h_i)_{\hat{q},w}\not=0$. In that case
     necessarily 
     \bmath
       (g_i\cdot h_i)_{\hat{q},w}=\IN_w(g_i)\cdot \IN_w(h_i),
     \emath
     which finishes the proof.
   \end{proof}

   In order to be able to apply standard bases techniques we need to
   fix a $t$-local monomial ordering which refines a given weight
   vector $w$. 

   \begin{definition}\label{def:word}
     Fix any \emph{global} monomial ordering, say $>$, on $\Mon(\ux)$
     and  let $w=(w_0,\ldots,w_n)\in\R_{<0}\times\R^n$. 

     We define a $t$-local monomial ordering, say $>_w$, on
     $\Mon\big(t^\frac{1}{N},\ux\big)$ by
     \begin{displaymath}
       t^\frac{\alpha}{N}\cdot\ux^\beta\;>_w\;t^{\frac{\alpha'}{N}}\cdot\ux^{\beta'}       
     \end{displaymath}
     if and only if
     \begin{displaymath}
       w\cdot \left(\frac{\alpha}{N},\beta\right)>w\cdot \left(\frac{\alpha'}{N},\beta'\right)
     \end{displaymath}
     or
     \begin{displaymath}
       w\cdot \left(\frac{\alpha}{N},\beta\right)=w\cdot \left(\frac{\alpha'}{N},\beta'\right)
       \;\mbox{ and }\;
       \ux^\beta\;>\;\ux^{\beta'}.              
     \end{displaymath}
     Note that this ordering is indeed $t$-local since $w_0<0$, and
     that it depends on $w$ and on $>$, but assuming that $>$ is fixed
     we will refrain from writing $>_{w,>}$ instead of $>_w$.
   \end{definition}

   \begin{remark}\label{rem:restriction}
     If $N\;|\;M$ then
     $\Mon\big(t^\frac{1}{N},\ux\big)\subset\Mon\big(t^\frac{1}{M},\ux\big)$, as
     already mentioned. For $w\in \R_{<0}\times\R^n$ we may thus
     consider the ordering $>_w$ on both $\Mon\big(t^\frac{1}{N},\ux\big)$
     and on $\Mon\big(t^\frac{1}{M},\ux\big)$, and let us call them for a
     moment $>_{w,N}$ respectively $>_{w,M}$.  It is important to note,
     that the restriction of $>_{w,M}$ to $\Mon\big(t^\frac{1}{N},\ux\big)$
     coincides with $>_{w,N}$. We therefore omit the additional
     subscript in our notation.
     \medskip
     \begin{center}
     \framebox[12cm]{
       \begin{minipage}{11.7cm}
         \medskip
         \emph{We now fix some global monomial ordering $>$ on
           $\Mon(\ux)$, and given a vector $w\in\R_{<0}\times\R^n$ we will throughout this
         section always denote by $>_w$ the monomial ordering from
         Definition \ref{def:word}.}
         \medskip
       \end{minipage}
       }       
     \end{center}
     \medskip
   \end{remark}

   \begin{proposition}\label{prop:initialformunit}
     If $w\in\R_{<0}\times\R^n$ and $f\in R_N[\ux]$ with $\lt_{>_w}(f)=1$,
     then $\IN_w(f)=1$.     
   \end{proposition}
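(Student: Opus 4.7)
The plan is to unpack the hypothesis $\lt_{>_w}(f)=1$ directly in terms of the definition of $>_w$, and then read off what this says about the $w$-weights of the monomials of $f$.

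First I would write $f$ in distributive form as $f = 1 + \sum c_{\alpha,\beta}\, t^{\alpha/N}\cdot\ux^\beta$, where the sum ranges over pairs $(\alpha,\beta)\in\N\times\N^n\setminus\{(0,0)\}$ with $c_{\alpha,\beta}\neq 0$. The hypothesis $\lt_{>_w}(f)=1$ says precisely that the monomial $1$ occurs with coefficient $1$ and that every other monomial $t^{\alpha/N}\cdot\ux^\beta$ appearing in $f$ satisfies $t^{\alpha/N}\cdot\ux^\beta <_w 1$.

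Next I would analyze this strict inequality using Definition \ref{def:word}: it means either $w\cdot(\alpha/N,\beta)<0$, or $w\cdot(\alpha/N,\beta)=0$ together with $\ux^\beta < 1$ in the underlying global ordering $>$ on $\Mon(\ux)$. The key observation is that the second alternative is vacuous: since $>$ is a global monomial ordering, we have $x_i>1$ for every $i$, hence $\ux^\beta\geq 1$ for all $\beta\in\N^n$, with equality iff $\beta=0$. In the case $\beta=0$ we would need $\alpha>0$, and then $w\cdot(\alpha/N,0)=w_0\cdot\alpha/N<0$ since $w_0<0$, so we fall back into the first alternative anyway. Thus every nonzero $(\alpha,\beta)$ occurring as an exponent in $f$ satisfies $w\cdot(\alpha/N,\beta)<0$.

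Finally, this means $\ord_w(f)=0$ and the only contribution to $f_{0,w}$ comes from the constant term $1$, so $\IN_w(f)=f_{0,w}=1$, as claimed. There is no real obstacle here: the entire argument is a direct unpacking of the definitions of $>_w$, $\lt_{>_w}$, and $\IN_w$, combined with the fact that a global ordering forces $\ux^\beta\geq 1$.
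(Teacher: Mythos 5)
Your argument is correct and essentially the same as the paper's: both unpack the definition of $>_w$, observe that $\lm_{>_w}(f)=1$ forces every non-constant monomial of $f$ to have $w$-weight $\leq 0$, and then use globality of $>$ (together with $w_0<0$) to rule out weight $0$ for any monomial other than $1$. The paper phrases it as a contradiction while you argue directly and spell out the $\beta=0$ case a bit more explicitly, but the content is identical.
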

   \begin{proof}
     Suppose this is not the case then there exists a monomial of $f$,
     say $1\not=t^\alpha\cdot \ux^\beta\in\mathcal{M}_f$, such that 
     \bmath
       w\cdot (\alpha,\beta)\geq w\cdot (0,\ldots,0)=0,
     \emath
     and since $\lm_{>_w}(f)=1$ we must necessarily have equality. But
     since $>$ is global $\ux^\beta> 1$, which  implies that also
     $t^\alpha\cdot \ux^\beta>_w 1$, in contradiction to
     $\lm_{>_w}(f)=1$. 
   \end{proof}

   \begin{proposition}\label{prop:stdin}
     Let $w\in \R_{<0}\times\R^n$, $I\unlhd
     R_N[\ux]$ be an ideal, and let $G=\{g_1,\ldots,g_k\}$ 
     be a standard basis of $I$ with respect to $>_w$ then
     \begin{displaymath}
       \IN_w(I)=\big\langle \IN_w(g_1),\ldots,\IN_w(g_k)\big\rangle\unlhd K\big[t^\frac{1}{N},\ux\big],
     \end{displaymath}
     and in particular,
     \begin{displaymath}
       \tin_w(I)=\big\langle \tin_w(g_1),\ldots,\tin_w(g_k)\big\rangle\unlhd K[\ux].
     \end{displaymath}
   \end{proposition}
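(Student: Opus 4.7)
The plan is to prove the two equalities by showing one inclusion each way, with the nontrivial inclusion running through the Buchberger Criterion together with Lemma \ref{lem:initialform}. The inclusion $\langle\IN_w(g_1),\ldots,\IN_w(g_k)\rangle\subseteq\IN_w(I)$ is immediate from $g_i\in I$, so the content of the proposition lies in the reverse containment.

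For the reverse inclusion, I would take an arbitrary $0\not=f\in I$ and show that $\IN_w(f)\in\langle\IN_w(g_1),\ldots,\IN_w(g_k)\rangle$. Since $G$ is a standard basis of $I$ with respect to $>_w$, Corollary \ref{cor:buchbergercriterion} provides a weak standard representation $u\cdot f=q_1\cdot g_1+\ldots+q_k\cdot g_k$ with $u\in S_{>_w}$ and satisfying (ID1), namely $\lm_{>_w}(u\cdot f)\geq\lm_{>_w}(q_i\cdot g_i)$ for all $i$. Proposition \ref{prop:initialformunit} applies to $u$ and yields $\IN_w(u)=1$, so $\IN_w(u\cdot f)=\IN_w(u)\cdot\IN_w(f)=\IN_w(f)$.

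The crucial observation is that $>_w$ was built in Definition \ref{def:word} to refine the $w$-weight function: the leading monomial of any nonzero element has the maximal $w$-weight occurring among its monomials, so for every $h\in R_N[\ux]$ the $w$-weight of $\lm_{>_w}(h)$ equals $\ord_w(h)$. Consequently (ID1) translates into the inequality $\ord_w(u\cdot f)\geq\ord_w(q_i\cdot g_i)$ for every $i$. Lemma \ref{lem:initialform}, applied to the representation $u\cdot f=\sum_i q_i\cdot g_i$, then yields
\[
\IN_w(f)=\IN_w(u\cdot f)\in\big\langle\IN_w(g_1),\ldots,\IN_w(g_k)\big\rangle,
\]
finishing the proof of the first equality. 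The statement about $\tin_w(I)$ follows by specialising $t=1$: writing $\IN_w(f)=\sum_i h_i\cdot\IN_w(g_i)$ with $h_i\in K[t^{1/N},\ux]$ and substituting $t=1$ expresses $\tin_w(f)$ as a $K[\ux]$-linear combination of the $\tin_w(g_i)$, and every generator of $\tin_w(I)$ is of this form.

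The main obstacle, and the only subtle point, is the transfer of the monomial inequality (ID1) into an inequality of $w$-orders; this is precisely where the dual role of $>_w$ (as a monomial refinement of $w$) and of Proposition \ref{prop:initialformunit} (killing the initial form of the unit $u$) enter. Everything else is a direct application of the tools established in the previous sections.
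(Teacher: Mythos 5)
Your proof is correct and follows essentially the same route as the paper: Buchberger's Criterion to obtain a weak standard representation $u\cdot f=\sum q_i g_i$, the observation that $>_w$ refines the $w$-weight so that (ID1) yields $\ord_w(u\cdot f)\geq\ord_w(q_i\cdot g_i)$, then Lemma \ref{lem:initialform} together with Proposition \ref{prop:initialformunit}, and finally the substitution $t=1$ for the $\tin_w$ statement. The only cosmetic difference is that you spell out the trivial inclusion and the identity $\IN_w(u\cdot f)=\IN_w(u)\cdot\IN_w(f)$, which the paper leaves implicit.
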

   \begin{proof}
     If $G$ is standard basis of $I$ then by Corollary
     \ref{cor:buchbergercriterion}
     every element $f\in I$ has a
     weak standard representation of the form
     \bmath
       u\cdot f=q_1\cdot g_1+\ldots +q_k\cdot g_k,
     \emath
     where $\lt_{>_w}(u)=1$ and 
     \bmath
       \lm_{>_w}(u\cdot f)\geq \lm_{>_w}(q_i\cdot g_i).
     \emath
     The latter in particular implies that
     \begin{displaymath}
       \ord_w(u\cdot f)=\deg_w\big(\lm_{>_w}(u\cdot f)\big)
       \geq \deg_w\big(\lm_{>_w}(q_i\cdot g_i)\big)
       =\ord_w(q_i\cdot g_i).
     \end{displaymath}
     We conclude therefore by Lemma \ref{lem:initialform}
     and Proposition \ref{prop:initialformunit} that
     \begin{displaymath}
       \IN_w(f)=\IN_w(u\cdot f)\in \big\langle \IN_w(g_1),\ldots,\IN_w(g_k)\big\rangle.
     \end{displaymath}

     For the part on the $t$-initial ideals just note that if $f\in
     I$ then by the above
     \bmath
       \IN_w(f)=\sum_{i=1}^k h_i\cdot \IN_w(g_i)
     \emath
     for some $h_i\in K\big[t^\frac{1}{N},\ux\big]$, and thus
     \begin{displaymath}
       \tin_w(f)= \sum_{i=1}^k h_i(1,\ux)\cdot \tin_w(g_i)\in 
       \langle\tin_w(g_1),\ldots,\tin_w(g_k)\rangle_{K[\ux]}.
     \end{displaymath}
   \end{proof}

   \begin{theorem}\label{thm:stdtin}
     Let $J\unlhd L[\ux]$ and $I\unlhd R_N[\ux]$ be ideals with $J=\langle
     I\rangle_{L[\ux]}$, let $w\in \R_{<0}\times\R^n$, 
     and let $G$ be a standard basis of $I$ with respect to $>_w$.
     \lang{

     }
     Then     
     \begin{displaymath}
       \tin_w(J)=\tin_w(I)=\big\langle \tin_w(G)\big\rangle\lhd K[\ux].
     \end{displaymath}
   \end{theorem}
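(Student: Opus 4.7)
\medskip

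\textbf{Proof plan for Theorem \ref{thm:stdtin}.}

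The second equality $\tin_w(I)=\langle\tin_w(G)\rangle$ is a direct application of Proposition \ref{prop:stdin}, so the substantive content is the first equality $\tin_w(J)=\tin_w(I)$. The inclusion $\tin_w(I)\subseteq\tin_w(J)$ is immediate from $I\subseteq J$, so my plan focuses on the reverse inclusion $\tin_w(J)\subseteq\tin_w(I)$.

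Given $f\in J=\langle I\rangle_{L[\ux]}$, write $f=\sum_{i=1}^\ell h_i\cdot p_i$ with $p_i\in I$ and $h_i\in L[\ux]$. Since each $h_i$ has coefficients in $L=\bigcup_M L_M$ and there are only finitely many of them, I can choose an integer $M$ divisible by $N$ such that all $h_i$ lie in $L_M[\ux]$, and then an exponent $a\in\N$ such that $\hat h_i:=t^{a/M}h_i\in R_M[\ux]$ for every $i$. Setting $g:=t^{a/M}\cdot f$, I obtain
\begin{displaymath}
g=\sum_{i=1}^\ell \hat h_i\cdot p_i\;\in\;\langle I\rangle_{R_M[\ux]},
\end{displaymath}
and by the definition of $\tin_w$ for elements of $L[\ux]$ we have $\tin_w(f)=\tin_w(g)$. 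Thus it suffices to show $\tin_w(g)\in\tin_w(I)=\langle\tin_w(G)\rangle$.

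The key step is then to show that $G$ is also a standard basis of $\langle I\rangle_{R_M[\ux]}$ with respect to $>_w$ on $\Mon(t^{1/M},\ux)$; once this is established, Proposition \ref{prop:stdin} applied over $R_M[\ux]$ yields $\tin_w(\langle I\rangle_{R_M[\ux]})=\langle\tin_w(G)\rangle$, which contains $\tin_w(g)$, and we are done. To verify this claim I use the Buchberger criterion, Corollary \ref{cor:buchbergercriterion}(d), in the ring $R_M[\ux]$. By Remark \ref{rem:restriction} the leading monomials (and the $s$-polynomials) of elements of $G$ are the same whether one works on $\Mon(t^{1/N},\ux)$ or on $\Mon(t^{1/M},\ux)$. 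The weak standard representations of $\spoly(g_i,g_j)$ that exist by hypothesis in $R_N[\ux]$ are, via the inclusion $R_N[\ux]\subset R_M[\ux]$ and $S_{>_w,N}\subset S_{>_w,M}$, also weak standard representations in $R_M[\ux]$. Finally $\langle I\rangle_{R_M[\ux]_{>_w}}=\langle G\rangle_{R_M[\ux]_{>_w}}$ because Proposition \ref{prop:basicstd}(b) gives the same equality over $R_N[\ux]_{>_w}$ and this extends along the natural embedding into $R_M[\ux]_{>_w}$.

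The main obstacle is the last step: ensuring that the standard basis property of $G$ is stable under passage from $R_N[\ux]$ to the larger ring $R_M[\ux]$. This is a ``flat extension'' type question for standard bases, and the point is that both ingredients of Buchberger's criterion transfer cleanly because the ordering, the set $G$, its leading terms, and its $s$-polynomials live already over $R_N[\ux]$, while only the multiplicative set of allowed unit multipliers grows. Once this transfer is in hand, everything reduces to Proposition \ref{prop:stdin} applied twice (once over $R_N[\ux]$ for the equality $\tin_w(I)=\langle\tin_w(G)\rangle$, once over $R_M[\ux]$ to capture an arbitrary $f\in J$ after clearing denominators).
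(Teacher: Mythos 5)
Your proposal is correct and follows essentially the same route as the paper's proof: clear denominators to land in $R_M[\ux]$ for $M$ a suitable multiple of $N$, transfer the standard-basis property of $G$ across the extension $R_N[\ux]\subset R_M[\ux]$ via Buchberger's criterion together with the compatibility of $>_w$ under restriction (Remark \ref{rem:restriction}), and then invoke Proposition \ref{prop:stdin} (equivalently, Lemma \ref{lem:initialform} and Proposition \ref{prop:initialformunit}) over the larger ring. The only cosmetic deviation is that the paper first enlarges $G$ to a finite generating set of $I$ and expresses $f\in J$ in terms of the $g_i$, whereas you write $f$ in terms of arbitrary $p_i\in I$ and instead verify $\langle I\rangle_{R_M[\ux]_{>_w}}=\langle G\rangle_{R_M[\ux]_{>_w}}$ directly from Proposition \ref{prop:basicstd}(b); both work.
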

   \begin{proof}
     Since $R_N[\ux]$ is noetherian, we may add a finite number of
     elements of $I$ to $G$ so as to assume that
     $G=(g_1,\ldots,g_k)$
     generates $I$. Since by Proposition \ref{prop:stdin} we already
     know that the $t$-initial forms of any standard basis of $I$ with
     respect to $>_w$ generate 
     $\tin_w(I)$ this does not change the right hand side. But then by
     assumption 
     \bmath
       J=\langle G\rangle_{L[\ux]},
     \emath
     and given an element $f\in J$ we can write it as
     \begin{displaymath}
       f=\sum_{i=1}^k t^\frac{-\alpha}{N\cdot M}\cdot a_i\cdot g_i
     \end{displaymath}
     for some $M>>0$, $a_i\in R_{N\cdot M}$ and $\alpha\in\N$.
     It follows that
     \begin{displaymath}
       t^\frac{\alpha}{N\cdot M}\cdot f=\sum_{i=1}^k a_i\cdot g_i\in
       \langle G\rangle_{R_{N\cdot M}[\ux]}.
     \end{displaymath}
     Since $G$ is a standard basis over $R_N[\ux]$ with respect to
     $>_w$ on $\Mon\big(t^\frac{1}{N},\ux\big)$ by Buchberger's
     Criterion \ref{cor:buchbergercriterion} $\spoly(g_i,g_j)$, $i<j$, has a
     weak standard representation
     \bmath
       u_{ij}\cdot \spoly(g_i,g_j)=
       \sum_{\nu=1}^kq_{ij\nu}\cdot g_\nu
     \emath
     with $u_{ij},q_{ij\nu}\in R_N[\ux]\subseteq R_{N\cdot M}[\ux]$
     and $\lt_{>_w}(u_{ij})=1$. Taking Remark \ref{rem:restriction}
     into account these are also weak standard representations with
     respect to the corresponding monomial ordering $>_w$ on
     $\Mon(t^\frac{1}{N\cdot M},\ux)$, and again by Buchberger's
     Criterion \ref{cor:buchbergercriterion} there exists a weak standard
     representation 
     \bmath
       u\cdot t^\frac{\alpha}{N\cdot M}\cdot f=
       \sum_{i=1}^k q_i\cdot g_i.
     \emath
     By Propositions \ref{lem:initialform} and
     \ref{prop:initialformunit} this implies that 
     \begin{displaymath}
       t^\frac{\alpha}{N\cdot M}\cdot\IN_w(f)=\IN_w\big(u\cdot
       t^\frac{\alpha}{N\cdot M}\cdot f\big)\in \big\langle\IN_w(G)\big\rangle. 
     \end{displaymath}
     Setting $t=1$ we get 
     \bmath
       \tin_w(f)=\big(t^\frac{k}{N\cdot M}\cdot\IN_w(f)\big)_{|t=1}\in
       \big\langle\tin_w(G)\big\rangle.
     \emath
   \end{proof}

   \begin{corollary}\label{cor:stdtin}
     Let $J=\langle I'\rangle_{L[\ux]}$
     with $I'\unlhd K\big[t^\frac{1}{N},\ux\big]$, 
     $w\in \R_{<0}\times\R^n$ and $G$ is a standard basis of
     $I'$ with respect to $>_w$ on 
     $\Mon\big(t^\frac{1}{N},\ux\big)$, then
     \begin{displaymath}
       \tin_w(J)=\tin_w(I')=\big\langle\tin_w(G)\big\rangle\unlhd K[\ux].
     \end{displaymath}
   \end{corollary}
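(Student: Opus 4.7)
The plan is to reduce the statement to Theorem \ref{thm:stdtin} by passing from the polynomial ring $K\bigl[t^{\frac{1}{N}},\ux\bigr]$ to the power series ring $R_N[\ux]=K\bigl[\bigl[t^{\frac{1}{N}}\bigr]\bigr][\ux]$. Concretely, set $I:=\langle I'\rangle_{R_N[\ux]}$. Since $K\bigl[t^{\frac{1}{N}},\ux\bigr]\subseteq R_N[\ux]\subseteq L[\ux]$, we have $J=\langle I'\rangle_{L[\ux]}=\langle I\rangle_{L[\ux]}$, so Theorem \ref{thm:stdtin} will apply to $I$ and $J$ once we know that $G$ is a standard basis of $I$ with respect to $>_w$.

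First I would check that $G$ remains a standard basis after this extension. Since $G\subset K\bigl[t^{\frac{1}{N}},\ux\bigr]$ is a standard basis of $I'=\langle G\rangle_{K[t^{1/N},\ux]}$, and $>_w$ is $t$-local on $\Mon\bigl(t^{\frac{1}{N}},\ux\bigr)$, Corollary \ref{cor:polynomialcase} (applied with $m=1$, $\ut=t^{\frac{1}{N}}$ and $s=1$) yields that $G$ is also a standard basis of $\langle G\rangle_{R_N[\ux]}=I$ with respect to $>_w$.

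Next I would invoke Theorem \ref{thm:stdtin} with this $I$, $J$ and $G$ to conclude
\begin{displaymath}
  \tin_w(J)=\tin_w(I)=\bigl\langle\tin_w(G)\bigr\rangle\lhd K[\ux].
\end{displaymath}
It only remains to identify $\tin_w(I')$ with this same ideal. The inclusion $\tin_w(I')\subseteq\tin_w(I)$ is immediate from $I'\subseteq I$, since every element of $I'$ is also an element of $I$ and hence its $t$-initial form lies in $\tin_w(I)$. For the reverse inclusion, each generator $\tin_w(g)$ with $g\in G\subseteq I'$ belongs by definition to $\tin_w(I')$, so $\langle\tin_w(G)\rangle\subseteq\tin_w(I')$. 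Combining these gives $\tin_w(I')=\langle\tin_w(G)\rangle=\tin_w(J)$.

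There is no real obstacle here; the only point requiring a moment of care is the verification that Corollary \ref{cor:polynomialcase} transfers the standard basis property from $K\bigl[t^{\frac{1}{N}},\ux\bigr]$ to $R_N[\ux]$ for the chosen $t$-local ordering $>_w$, after which the corollary follows from Theorem \ref{thm:stdtin} together with the elementary double inclusion comparing $\tin_w(I')$ and $\tin_w(I)$.
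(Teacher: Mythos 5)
Your overall strategy (pass to $R_N[\ux]$ and apply Theorem \ref{thm:stdtin}) is the same as the paper's, but there is a genuine gap in the step where you invoke Corollary \ref{cor:polynomialcase}. You assert both that $I'=\langle G\rangle_{K[t^{1/N},\ux]}$ and that $\langle G\rangle_{R_N[\ux]}=I:=\langle I'\rangle_{R_N[\ux]}$. Neither equality holds in general: a standard basis of $I'$ need only generate $I'$ after localising at $S_{>_w}$, and the relevant unit $u\in S_{>_w}$ need not be a unit in $K[t^{1/N},\ux]$ or even in $R_N[\ux]$ (its inverse may have unbounded $\ux$-degree). A concrete counterexample with $N=1$, $n=1$, $w=(-1,0)$: take $I'=\langle x\rangle\unlhd K[t,x]$ and $G=\{x+tx^2\}$. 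Then $\lm_{>_w}(x+tx^2)=x$, so $G$ is a standard basis of $I'$, yet $\langle G\rangle_{K[t,x]}=\langle x(1+tx)\rangle\subsetneq\langle x\rangle$, and likewise $\langle G\rangle_{K[[t]][x]}\subsetneq\langle x\rangle_{K[[t]][x]}$ because $1+tx$ is not a unit in $K[[t]][x]$. So the ideal to which Corollary \ref{cor:polynomialcase} applies, namely $\langle G\rangle_{R_N[\ux]}$, can be strictly smaller than the $I$ you feed into Theorem \ref{thm:stdtin}, and the claimed identification collapses.

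The conclusion you need — that $G$ is a standard basis of $\langle I'\rangle_{R_N[\ux]}$ — is in fact true, but it requires an extra argument comparing leading ideals via Remark \ref{rem:leadingsubmodule} and the localised rings, not a bare appeal to Corollary \ref{cor:polynomialcase}. The paper sidesteps this by first enlarging $G$ to a finite \emph{generating} set $G'$ of $I'$ (which remains a standard basis), so that $\langle G'\rangle_{R_N[\ux]}=\langle I'\rangle_{R_N[\ux]}$ is automatic and Corollary \ref{cor:polynomialcase} applies cleanly; Theorem \ref{thm:stdtin} then yields $\tin_w(J)=\langle\tin_w(G')\rangle$, and a final step using weak standard representations of the extra elements $f\in G'\setminus G$ with respect to $G$, together with Lemma \ref{lem:initialform} and Proposition \ref{prop:initialformunit}, reduces $\langle\tin_w(G')\rangle$ back to $\langle\tin_w(G)\rangle$. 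You would need to add one of these two repairs for the proof to go through.
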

   \begin{proof}
     Enlarge $G$ to a finite generating set $G'$ of $I'$, then $G'$ is still
     a standard basis of $I'$. 
     By Corollary \ref{cor:polynomialcase} $G'$ is then also a standard
     basis of 
     \begin{displaymath}
       I:=\langle G'\rangle_{R_N[\ux]}=\langle f_1,\ldots,f_k\rangle_{R_N[\ux]},
     \end{displaymath}
     and Theorem \ref{thm:stdtin} applied to $I$ thus shows that
     \begin{displaymath}
       \tin(J)=\big\langle\tin_w(G')\big\rangle.
     \end{displaymath}
     However, if $f\in G'\subset I'$ is one of the additional elements then it
     has a weak standard representation 
     \begin{displaymath}
       u\cdot f=\sum_{g\in G}q_g\cdot g
     \end{displaymath}
     with respect to $G$ and $>_w$, since $G$ is a standard basis of
     $I'$. Applying Propositions \ref{lem:initialform} and
     \ref{prop:initialformunit} then shows that
     $\IN_w(f)\in\langle\IN_w(G)\rangle$, which finishes the proof.
   \end{proof}

   \begin{remark}\label{rem:saturation}
     Note that if $I\unlhd R_N[\ux]$ and $J=\langle I\rangle_{L[\ux]}$,
     then
     \begin{displaymath}
       J\cap R_N[\ux]=I:\big\langle t^\frac{1}{N}\big\rangle^\infty,
     \end{displaymath}
     but the saturation is in general necessary.

     Since $L_N\subset L$ is a field extension Corollary
     \ref{cor:I=I^e} implies     
     \bmath
       J\cap L_N[\ux]=\langle I\rangle_{L_N[\ux]},
     \emath
     and it suffices to see that 
     \begin{displaymath}
       \langle I\rangle_{L_N[\ux]}\cap R_N[\ux]=I:\big\langle t^\frac{1}{N}\big\rangle^\infty.
     \end{displaymath}
     If $I\cap S_N\not=\emptyset$ then both sides of the equation
     coincide with $R_N[\ux]$, so that we may assume that $I\cap S_N$ is
     empty. Recall that $L_N=S_N^{-1}R_N$, so that if $f\in R_N[\ux]$
     with $t^\frac{\alpha}{N}\cdot f\in I$ for some $\alpha$, then
     \begin{displaymath}
       f=\frac{t^\frac{\alpha}{N}\cdot f}{t^\frac{\alpha}{N}}\in \langle I\rangle_{L_N[\ux]}\cap R_N[\ux].
     \end{displaymath}
     Conversely, if 
     \bmath
       f=\frac{g}{t^\frac{k}{N}}\in \langle I\rangle_{L_N[\ux]}\cap R_N[\ux]
     \emath
     with $g\in I$, then $g=t^\frac{\alpha}{N}\cdot f\in I$ and thus $f$ is
     in the right hand side.     
   \end{remark}

   \lang{
   \begin{lemma}\label{lem:monomialideal}
     Let $F\subset F'$ be a field extension, and $I=\langle
     \ux^{\alpha_1},\ldots, \ux^{\alpha_k}\rangle\unlhd F[\ux]$ be a
     monomial ideal.
     Then $I=\langle I\rangle_{F'[\ux]}\cap F[\ux]$.
   \end{lemma}
   \begin{proof}
     It suffices to show that $\langle I\rangle_{F'[\ux]}\cap F[\ux]\subseteq I$. For this we
     consider an $f\in \langle I\rangle_{F'[\ux]}\cap F[\ux]$. Since $\langle I\rangle_{F'[\ux]}=\langle
     \ux^{\alpha_1},\ldots, \ux^{\alpha_k}\rangle_{F'[\ux]}$ is a
     monomial ideal, for every term, say $f_j$, of $f$ there is some $i$ such that
     $\ux^{\alpha_i}\;|\;f_j$, i.e.\ $f_j=\ux^{\alpha_i}\cdot f'_j$ with
     $f'_j\in F'[\ux]$. However, since all coefficients of $f$ are in
     $F$ so must be all coefficients of $f'_j$, and thus
     $f_j=\ux^{\alpha_i}\cdot f'_j\in I$, which implies $f\in I$.
   \end{proof}

   \begin{lemma}\label{lem:groebnerbasis}
     Let $F\subset F'$ be a field extension, $f_1,\ldots,f_k\in
     F[\ux]$,  
     and $>$ a global monomial ordering on $\Mon(\ux)$. 
     Then every Gr\"obner basis $G$ of
     $\langle f_1,\ldots,f_k\rangle_{F[\ux]}$ with respect to $>$ is
     also a Gr\"obner basis of $\langle
     f_1,\ldots,f_k\rangle_{F'[\ux]}$. 
   \end{lemma}
   \begin{proof}
     If $G=\{g_1,\ldots,g_l\}$ then $g_i\in \langle
     f_1,\ldots,f_k\rangle_{F[\ux]}\subseteq \langle
     f_1,\ldots,f_k\rangle_{F'[\ux]}$, and
     \begin{displaymath}
       \langle f_1,\ldots,f_k\rangle_{F[\ux]}=\langle G\rangle_{F[\ux]}
     \end{displaymath}
     shows that
     \begin{displaymath}
       \langle f_1,\ldots,f_k\rangle_{F'[\ux]}=\langle G\rangle_{F'[\ux]}.
     \end{displaymath}
     If $s_{i,j}$ denotes the
     S-polynomial of $g_i$ and $g_j$, then by Buchberger's Criterion
     \ref{thm:buchbergercriterion} there exists a standard representation
     \begin{displaymath}
       s_{i,j}=q_{1,i,j}\cdot g_1+\ldots +q_{l,i,j}\cdot g_l
     \end{displaymath}
     with $q_{s,i,j}\in F[\ux]\subseteq F'[\ux]$. But
     then these same representations together with Buchberger's
     Criterion imply that $G$ is a Gr\"obner basis of $\langle
     f_1,\ldots,f_k\rangle_{F'[\ux]}$.
   \end{proof}
   }

   \begin{corollary}\label{cor:I=I^e}
     Let $F\subset F'$ be a field extension and $I\unlhd F[\ux]$.
     Then $I=\langle I\rangle_{F'[\ux]}\cap F[\ux]$.
   \end{corollary}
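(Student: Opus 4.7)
The inclusion $I \subseteq \langle I\rangle_{F'[\ux]} \cap F[\ux]$ is immediate, so the plan concerns the reverse direction. The strategy is to use a standard basis of $I$ as a bridge between the two rings, based on the observation that both Buchberger's criterion and the polynomial division algorithm are invariant under enlargement of the coefficient field, since the witnessing polynomial identities involve only the ring operations.

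First I would fix any global monomial ordering $>$ on $\Mon(\ux)$; specialising the theory of the preceding sections to $m=0$ and $s=1$, this ordering is (vacuously) $\ut$-local and the associated localisation is trivial, so $F[\ux]_> = F[\ux]$ and $S_>$ reduces to $\{1\}$. By Proposition \ref{prop:existencestd} the ideal $I$ admits a standard basis $G = (g_1,\ldots,g_k) \subset F[\ux]$, and by Proposition \ref{prop:basicstd}(b) we have $I = \langle G\rangle_{F[\ux]}$. I then want to show that $G$ is also a standard basis of the extended ideal $\langle I\rangle_{F'[\ux]} = \langle G\rangle_{F'[\ux]}$: by Corollary \ref{cor:buchbergercriterion} applied to $I$, each $\spoly(g_i,g_j)$ has a weak standard representation over $F[\ux]$, which, because $S_> = \{1\}$, is already an honest standard representation with coefficients in $F[\ux] \subset F'[\ux]$. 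Read over $F'[\ux]$ the same polynomial identity remains a standard representation of $\spoly(g_i,g_j)$, and a second application of Corollary \ref{cor:buchbergercriterion} then shows that $G$ is indeed a standard basis of $\langle I\rangle_{F'[\ux]}$.

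To finish I would take any $f \in \langle I\rangle_{F'[\ux]} \cap F[\ux]$ and invoke Theorem \ref{thm:DwR} together with Remark \ref{rem:DwR} to produce a division with remainder $f = q_1 g_1 + \ldots + q_k g_k + r$ whose coefficients $q_i$ and remainder $r$ all lie in $F[\ux]$, satisfying (ID1) and (ID2). Reading this identity over $F'[\ux]$ it is still a division with remainder of $f$ by $G$; since $G$ is a standard basis of $\langle I\rangle_{F'[\ux]}$ and $f$ belongs to this ideal, Proposition \ref{prop:basicstd}(a) forces $r = 0$, and therefore $f = \sum_i q_i g_i \in \langle G\rangle_{F[\ux]} = I$. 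The main obstacle here is conceptual rather than computational: one must trust the coefficient-field-invariance of the standard-basis machinery, that is, that a single polynomial identity simultaneously serves as a Buchberger-type witness and as a division with remainder over both $F[\ux]$ and its extension $F'[\ux]$. Once this invariance is accepted the proof is an essentially formal chaining of the results developed earlier in the paper.
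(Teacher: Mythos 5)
Your proof is correct and shares the paper's central idea — both pick a Gr\"obner basis $G$ of $I$ with respect to a global ordering and observe, via Buchberger's Criterion, that the same standard representations of the $\spoly(g_i,g_j)$ witness that $G$ is simultaneously a Gr\"obner basis of $\langle I\rangle_{F'[\ux]}$. The finishing step, however, goes a different route. The paper compares leading ideals: it notes $L_>(I) \subseteq L_>\big(I^e\cap F[\ux]\big) \subseteq L_>(I^e)\cap F[\ux]$, computes $L_>(I^e)=\big\langle L_>(I)\big\rangle_{F'[\ux]}$, reduces to the (easy) monomial-ideal case to get $L_>(I^e)\cap F[\ux]=L_>(I)$, and then concludes $I=I^e\cap F[\ux]$ from Proposition \ref{prop:basicstd}(c). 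You instead argue element-by-element: given $f\in I^e\cap F[\ux]$, divide $f$ by $G$ over $F[\ux]$ to get $f=\sum q_ig_i+r$ with conditions (ID1)/(ID2), observe that the same identity and the same leading-monomial conditions persist when read over $F'[\ux]$, and invoke Proposition \ref{prop:basicstd}(a) to force $r=0$. Your route avoids the auxiliary monomial-ideal lemma entirely, which is a modest simplification; the paper's route is more structural and expresses the conclusion purely in terms of leading ideals. Both are valid, and your explicit remarks about $S_>=\{1\}$ for a global ordering (so that weak divisions are genuine divisions and no unit appears) are exactly the point one needs to check to make the machinery of the paper apply in the polynomial case.
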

   \begin{proof}
     \kurz{The result is obvious if $I$ is generated by monomials. For
     the general case fix}\lang{Fix} any global monomial ordering $>$ on $\Mon(\ux)$
     and set $I^e=\langle I\rangle_{F'[\ux]}$.
     Since $I\subseteq I^e\cap F[\ux]\subseteq I^e$ we also have 
     \begin{equation}\label{eq:I=I^e:1}
       L_>(I)\subseteq L_>\big(I^e\cap F[\ux]\big)\subseteq L_>(I^e)\cap F[\ux].
     \end{equation}
     If we choose a standard basis
     $G=(g_1,\ldots,g_k)$ of $I$, then by 
     \lang{Lemma \ref{lem:groebnerbasis}}\kurz{Buchberger's Criterion}
     $G$ is also a Gr\"obner basis of $I^e$ and thus
     \begin{displaymath}
       L_>(I)=\langle \lm_>(g_i)\;|\;i=1,\ldots,k\rangle_{F[\ux]}
     \end{displaymath}
     and 
     \begin{displaymath}
       L_>(I^e)=\langle \lm_>(g_i)\;|\;i=1,\ldots,k\rangle_{F'[\ux]}
       =\big\langle L_>(I)\big\rangle_{F'[\ux]}.
     \end{displaymath}
     Since the latter is a monomial ideal, \lang{by Lemma
     \ref{lem:monomialideal}} we have
     \begin{displaymath}
       L_>(I^e)\cap F[\ux]=L_>(I).
     \end{displaymath}
     In view of \eqref{eq:I=I^e:1} this shows that
     \begin{displaymath}
       L_>(I)=L_>\big(I^e\cap F[\ux]\big),
     \end{displaymath}
     and since $I\subseteq I^e\cap F[\ux]$ this finishes the proof
     by Proposition \ref{prop:basicstd}.
   \end{proof}

   We can actually show more, namely, that for each $I\unlhd R_N[\ux]$
   and each $M>0$ (see Corollary \ref{cor:I^e})
   \begin{displaymath}
     \langle I\rangle_{R_{M\cdot N}[\ux]}\cap R_N[\ux]=I,
   \end{displaymath}
   and if $I$ is saturated with respect to $t^\frac{1}{N}$ then (see
   Corollary \ref{cor:initialideals2}) 
   \begin{displaymath}
     \IN_w\big(\langle I\rangle_{R_{M\cdot N}[\ux]}\big)=
     \big\langle\IN_w(G)\big\rangle,
   \end{displaymath}
   if $G$ is a standard basis of $I$ with respect to $>_w$. 
   \lang{This
   requires, however, some extra work which is partly outsourced to
   Section \ref{sec:properties}.}

   \kurz{
     For this we need the following simple observation.

   \begin{lemma}\label{cor:free}
     $R_{N\cdot M}[\ux]$ is a
     free $R_N[\ux]$-module with basis $\big\{1,t^{\frac{1}{N\cdot
         M}},\ldots,t^{\frac{M-1}{N\cdot M}}\big\}$.     
   \end{lemma}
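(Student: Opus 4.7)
Set $s := t^{\frac{1}{N\cdot M}}$, so that $t^{\frac{1}{N}} = s^M$ and
\bmath
  R_N = K[[s^M]] \subseteq K[[s]] = R_{N\cdot M}.
\emath
The first step is to reduce the statement to the coefficient rings: since $R_{N\cdot M}[\ux]$ and $R_N[\ux]$ are the polynomial rings in the same variables $\ux$, any $R_N$-module decomposition of $R_{N\cdot M}$ extends termwise (monomial by monomial in $\ux$) to an $R_N[\ux]$-module decomposition of $R_{N\cdot M}[\ux]$ with the same basis. So it suffices to prove the statement at the level of $R_{N\cdot M}$ over $R_N$.

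The main step is then to show that $K[[s]]$ is a free $K[[s^M]]$-module with basis $\{1,s,\ldots,s^{M-1}\}$. Given any power series $f = \sum_{k=0}^\infty a_k s^k \in K[[s]]$, I would group its terms according to the residue of $k$ modulo $M$, writing
\bmath
  f = \sum_{j=0}^{M-1} s^j \cdot f_j(s^M),\;\;\;\text{where}\;\;\; f_j(s^M) := \sum_{l=0}^\infty a_{lM+j}\cdot s^{lM}\in K[[s^M]].
\emath
This establishes that $\{1,s,\ldots,s^{M-1}\}$ spans $K[[s]]$ over $K[[s^M]]$. For linear independence, suppose $\sum_{j=0}^{M-1} s^j\cdot g_j(s^M) = 0$ with $g_j \in K[[s^M]]$; since the exponents occurring in $s^j\cdot g_j(s^M)$ all lie in the residue class $j$ modulo $M$, the terms in different summands involve disjoint monomials in $s$, and so each $s^j\cdot g_j(s^M) = 0$, forcing $g_j=0$.

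There is no real obstacle here; the only mild subtlety is making sure that the rearrangement of the series is legitimate, which it is because for each fixed exponent $k$ the coefficient $a_k$ appears in exactly one summand and no infinite sum of coefficients is ever formed. Rewriting the conclusion back in terms of $t$ via $s^j = t^{\frac{j}{N\cdot M}}$ gives the claimed basis $\big\{1,t^{\frac{1}{N\cdot M}},\ldots,t^{\frac{M-1}{N\cdot M}}\big\}$, and tensoring trivially with $\ux$ completes the proof.
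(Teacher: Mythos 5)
Your proof is correct and follows essentially the same route as the paper: the paper first proves (as a separate lemma) that $R_{N\cdot M}$ is free over $R_N$ with this basis, by the same grouping of exponents according to their residue modulo $M$ and the observation that the summands involve disjoint monomials, and then extends to $R_{N\cdot M}[\ux]$ over $R_N[\ux]$ by working coefficientwise in $\ux$, exactly as you do.
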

   }

   \begin{corollary}\label{cor:I^e}
     If $I\unlhd R_N[\ux]$ then $\langle I\rangle_{R_{N\cdot M}[\ux]}\cap R_N[\ux]=I$.
   \end{corollary}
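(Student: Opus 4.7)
The plan is to use the freeness statement from Lemma~\ref{cor:free} to project a representation over $R_{N\cdot M}[\ux]$ back down to a representation over $R_N[\ux]$. The inclusion $I\subseteq \langle I\rangle_{R_{N\cdot M}[\ux]}\cap R_N[\ux]$ is trivial, so only the reverse inclusion requires argument.

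First I would take an arbitrary element $f\in \langle I\rangle_{R_{N\cdot M}[\ux]}\cap R_N[\ux]$ and write it as $f=\sum_{i=1}^k a_i\cdot g_i$ with $g_i\in I$ and $a_i\in R_{N\cdot M}[\ux]$. Using Lemma~\ref{cor:free}, each coefficient decomposes uniquely as
\begin{displaymath}
  a_i=\sum_{j=0}^{M-1}a_{i,j}\cdot t^{\frac{j}{N\cdot M}}
\end{displaymath}
with $a_{i,j}\in R_N[\ux]$. Substituting this into the expression for $f$ gives
\begin{displaymath}
  f=\sum_{j=0}^{M-1}\left(\sum_{i=1}^k a_{i,j}\cdot g_i\right)\cdot t^{\frac{j}{N\cdot M}}.
\end{displaymath}

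Since on the left hand side $f\in R_N[\ux]$, its expansion in the free basis $\big\{1,t^{\frac{1}{N\cdot M}},\ldots,t^{\frac{M-1}{N\cdot M}}\big\}$ has only the component corresponding to $j=0$ nonzero. By uniqueness of representation in a free module we conclude that
\begin{displaymath}
  f=\sum_{i=1}^k a_{i,0}\cdot g_i,
\end{displaymath}
and since each $a_{i,0}\in R_N[\ux]$ and each $g_i\in I$, this shows that $f\in I$, completing the proof. The main (and only nontrivial) ingredient is the freeness statement in Lemma~\ref{cor:free}, which ensures that the $t^0$-component of a representation over the larger ring descends to a representation over the smaller ring.
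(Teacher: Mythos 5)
Your proof is correct and uses essentially the same idea as the paper: decompose the coefficients via the free basis $\{1, t^{\frac{1}{N\cdot M}},\ldots,t^{\frac{M-1}{N\cdot M}}\}$ of $R_{N\cdot M}[\ux]$ over $R_N[\ux]$ (Lemma~\ref{cor:free}) and compare components. If anything, your version is slightly more careful: the paper writes a general element of $\langle I\rangle_{R_{N\cdot M}[\ux]}$ as a single product $g\cdot h$ with $g\in I$, whereas you correctly take a finite $R_{N\cdot M}[\ux]$-linear combination $\sum_i a_i g_i$ and collect the basis components, which is the precise form of the argument.
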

   \begin{proof}
     If $f=g\cdot h\in \langle I\rangle_{R_{N\cdot M}[\ux]}\cap R_N[\ux]$ with $g\in I$ and
     $h\in R_{N\cdot M}[\ux]$ then by \lang{Corollary}\kurz{Lemma} \ref{cor:free} there are
     uniquely determined $h_i\in R_N$ such that
     \bmath
       h=\sum_{i=0}^{M-1} h_i\cdot t^\frac{i}{N\cdot M},
     \emath
     and hence
     \bmath
       f=\sum_{i=0}^{M-1} (g\cdot h_i)\cdot t^\frac{i}{N\cdot M}
     \emath
     with $g\cdot h_i\in R_N[\ux]$. By assumption $f\in
     R_N[\ux]=R_{N\cdot M}[\ux]\cap\langle 1\rangle_{R_N[\ux]}$ and by
     \lang{Corollary}\kurz{Lemma} \ref{cor:free} we thus have 
     \bmath
       g\cdot h_i=0\lang{\;\;\;}\mbox{ for all }\lang{\;} i=1,\ldots,M-1.
     \emath
     But then $f=g\cdot h_0\in I$.
   \end{proof}   

   \begin{lemma}\label{lem:saturation}
     Let $I\unlhd R_N[\ux]$ be an ideal such that $I=I:\big\langle
     t^\frac{1}{N}\big\rangle^\infty$, then for any $M\geq 1$
     \begin{displaymath}
       \langle I\rangle_{R_{N\cdot M}[\ux]}=\langle I\rangle_{R_{N\cdot M}[\ux]}:\big\langle
       t^\frac{1}{N\cdot M}\big\rangle^\infty.
     \end{displaymath}
   \end{lemma}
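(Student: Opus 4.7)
The plan is to reduce saturation of $J := \langle I\rangle_{R_{N\cdot M}[\ux]}$ with respect to $t^{\frac{1}{N\cdot M}}$ to the assumed saturation of $I$ with respect to $t^{\frac{1}{N}}$, using the free module structure from Lemma \ref{cor:free}.

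First, using that $R_{N\cdot M}[\ux]$ is a free $R_N[\ux]$-module with basis $1, t^{\frac{1}{N\cdot M}},\ldots, t^{\frac{M-1}{N\cdot M}}$, I would establish the direct sum decomposition
\begin{displaymath}
  J \;=\; \bigoplus_{j=0}^{M-1} t^{\frac{j}{N\cdot M}}\cdot I,
\end{displaymath}
viewed as $R_N[\ux]$-modules. The inclusion ``$\supseteq$'' is clear. For the reverse inclusion, any element of $J$ is a finite sum $\sum_i r_i\cdot g_i$ with $r_i\in R_{N\cdot M}[\ux]$ and $g_i\in I$; expanding each $r_i$ in the above basis and regrouping gives a presentation $\sum_{j=0}^{M-1} t^{\frac{j}{N\cdot M}}\cdot h_j$ with $h_j\in I$. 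Directness of the sum follows from the freeness of the basis, since a relation $\sum_j t^{\frac{j}{N\cdot M}} h_j = 0$ with $h_j\in R_N[\ux]$ forces each $h_j=0$.

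Next, I would show $J:\big\langle t^{\frac{1}{N\cdot M}}\big\rangle \subseteq J$. Suppose $t^{\frac{1}{N\cdot M}}\cdot f \in J$ with $f\in R_{N\cdot M}[\ux]$, and write $f = \sum_{j=0}^{M-1} t^{\frac{j}{N\cdot M}}\cdot f_j$ with $f_j\in R_N[\ux]$. Then
\begin{displaymath}
  t^{\frac{1}{N\cdot M}}\cdot f \;=\; \sum_{j=0}^{M-2} t^{\frac{j+1}{N\cdot M}}\cdot f_j \;+\; t^{\frac{0}{N\cdot M}}\cdot \big(t^{\frac{1}{N}}\cdot f_{M-1}\big),
\end{displaymath}
since $t^{\frac{M}{N\cdot M}} = t^{\frac{1}{N}}\in R_N$. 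Matching coefficients under the direct sum decomposition of $J$ yields $f_0,\ldots,f_{M-2}\in I$ and $t^{\frac{1}{N}}\cdot f_{M-1}\in I$. The assumption $I = I:\big\langle t^{\frac{1}{N}}\big\rangle^\infty$ then gives $f_{M-1}\in I$, so every component $f_j$ lies in $I$ and hence $f\in J$. Iterating this argument (or observing that $J:\big\langle t^{\frac{1}{N\cdot M}}\big\rangle = J$ implies $J:\big\langle t^{\frac{1}{N\cdot M}}\big\rangle^\infty = J$) finishes the proof.

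The main obstacle is to set up the direct sum decomposition correctly and to deal with the ``wrap-around'' when multiplication by $t^{\frac{1}{N\cdot M}}$ sends the top basis element $t^{\frac{M-1}{N\cdot M}}$ to $t^{\frac{1}{N}}$, which is no longer part of the $R_N$-basis; this is precisely the place where the saturation hypothesis on $I$ enters, and once the decomposition is in hand the rest is bookkeeping.
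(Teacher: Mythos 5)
Your proof is correct and uses essentially the same idea as the paper: expand over the free $R_N[\ux]$-basis $1,t^{1/(NM)},\ldots,t^{(M-1)/(NM)}$, compare coefficients, and invoke the saturation hypothesis on $I$. The only cosmetic difference is that you saturate one factor of $t^{1/(NM)}$ at a time and iterate, while the paper treats a general power $t^{\alpha/(NM)}$ in one step by writing $\alpha=aM+b$ with $0\le b<M$; your explicit direct-sum decomposition $J=\bigoplus_{j}t^{j/(NM)}I$ is a slightly cleaner packaging of the same bookkeeping.
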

   \begin{proof}
     Let $f,h\in R_{N\cdot M}[\ux]$, $\alpha\in\N$, $g\in I$ such that
     \begin{equation}\label{eq:saturation:1}
       t^\frac{\alpha}{N\cdot M}\cdot f=g\cdot h.
     \end{equation}
     We have to show that $f\in \langle I\rangle_{R_{N\cdot
         M}[\ux]}$. For this purpose do  division
     with remainder in order to get 
     \bmath
       \alpha=a\cdot M+b\;\;\;\mbox{ with }\;\; 0\leq b<M.
     \emath
     By  \lang{Corollary}\kurz{Lemma} \ref{cor:free} there are  $h_i, f_i\in R_N[\ux]$
     such that 
     $f=\sum_{i=0}^{M-1}f_i\cdot t^\frac{i}{N\cdot M}$ and 
     $h=\sum_{i=0}^{M-1}h_i\cdot t^\frac{i}{N\cdot
       M}$. \eqref{eq:saturation:1} then translates into
     \begin{displaymath}
       \sum_{i=0}^{M-1-b}t^\frac{b+i}{N\cdot M}\cdot t^\frac{a}{N}\cdot f_i 
       +\sum_{i=M-b}^{M-1}t^\frac{b+i-M}{N\cdot M}\cdot t^\frac{a+1}{N}\cdot f_i 
       =
       \sum_{i=0}^{M-1}g\cdot h_i\cdot t^\frac{i}{N\cdot M},
     \end{displaymath}
     and since $\big\{1,t^\frac{1}{N\cdot M},\ldots,t^\frac{M-1}{N\cdot
       M}\big\}$  is $R_N[\ux]$-linearly independent we can compare
     coefficients to find
     \bmath
       t^\frac{a}{N}\cdot f_i=g\cdot h_{b+i}\in I
     \emath
     for $i=0,\ldots,M-b-1$, and 
     \bmath
       t^\frac{a+1}{N}\cdot f_i=g\cdot h_{b+i-M}\in I
     \emath
     for $i=M-b,\ldots,M-1$. In any case, since $I$ is saturated with
     respect to $t^\frac{1}{N}$ by assumption we conclude that $f_i\in
     I$ for all $i=0,\ldots,M-1$, and therefore $f\in \langle I\rangle_{R_{N\cdot
         M}[\ux]}$.     
   \end{proof}

   \begin{corollary}\label{cor:initialideals}
     Let $J\unlhd L[\ux]$ be an ideal such that $J=\langle J\cap
     R_N[\ux]\rangle_{L[\ux]}$, let $w\in \R_{<0}\times\R^n$, and let
     $G$ be a standard  basis of 
     $J\cap R_N[\ux]$ with respect to $>_w$. 

     Then for all $M\geq 1$
     \begin{displaymath}
       \IN_w\big(J\cap R_{N\cdot
         M}[\ux]\big)=\big\langle\IN_w(G)\big\rangle\lhd
       K\big[t^\frac{1}{N\cdot M},\ux\big]
     \end{displaymath}
     and
     \begin{displaymath}
       \tin_w\big(J\cap R_{N\cdot
         M}[\ux]\big)=\big\langle\tin_w(G)\big\rangle=\tin_w\big(J\cap
       R_N[\ux]\big)\lhd K[\ux]. 
     \end{displaymath}
   \end{corollary}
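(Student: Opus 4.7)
The plan is to let $I := J \cap R_N[\ux]$ and reduce the claim to an application of Proposition \ref{prop:stdin} over $R_{N\cdot M}[\ux]$. The key algebraic observation is that $I$ is automatically saturated with respect to $t^{1/N}$: since $t^{1/N}$ is a unit in $L$, any $f\in R_N[\ux]$ with $t^{\alpha/N}\cdot f\in I$ already lies in $J\cap R_N[\ux]=I$. The hypothesis $J=\langle I\rangle_{L[\ux]}$ gives $\langle I\rangle_{R_{N\cdot M}[\ux]}\subseteq J\cap R_{N\cdot M}[\ux]$, and the reverse inclusion follows by applying Remark \ref{rem:saturation} at the level $N\cdot M$ together with Lemma \ref{lem:saturation}, which transfers the $t^{1/N}$-saturation of $I$ to a $t^{1/(N\cdot M)}$-saturation of $\langle I\rangle_{R_{N\cdot M}[\ux]}$. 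Hence $J\cap R_{N\cdot M}[\ux]=\langle I\rangle_{R_{N\cdot M}[\ux]}$.

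Next I would show that $G$ remains a standard basis of $\langle I\rangle_{R_{N\cdot M}[\ux]}$ over $R_{N\cdot M}[\ux]$ with respect to $>_w$. Since the ordering $>_w$ on $\Mon(t^{1/(N\cdot M)},\ux)$ restricts to $>_w$ on $\Mon(t^{1/N},\ux)$ by Remark \ref{rem:restriction}, weak standard representations over $R_N[\ux]$ remain weak standard representations over $R_{N\cdot M}[\ux]$. Enlarging $G$ to a finite generating set $G'$ of $I$ over $R_N[\ux]$ does not enlarge $\langle\IN_w(G)\rangle$: each $g'\in G'\setminus G$ lies in $I$ and therefore, by Corollary \ref{cor:buchbergercriterion}, admits a weak standard representation with respect to $G$, to which Lemma \ref{lem:initialform} and Proposition \ref{prop:initialformunit} apply and put $\IN_w(g')$ into $\langle\IN_w(G)\rangle$. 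Since $G'$ generates $\langle I\rangle_{R_{N\cdot M}[\ux]}$ and the weak standard representations of its s-polynomials transfer from $R_N[\ux]$ to $R_{N\cdot M}[\ux]$, Buchberger's Criterion \ref{cor:buchbergercriterion} certifies that $G'$ is a standard basis of $\langle I\rangle_{R_{N\cdot M}[\ux]}$ over $R_{N\cdot M}[\ux]$.

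Proposition \ref{prop:stdin} then yields
\[\IN_w\big(J\cap R_{N\cdot M}[\ux]\big)=\IN_w\big(\langle I\rangle_{R_{N\cdot M}[\ux]}\big)=\langle\IN_w(G')\rangle=\langle\IN_w(G)\rangle,\]
and specialising at $t=1$ gives $\tin_w(J\cap R_{N\cdot M}[\ux])=\langle\tin_w(G)\rangle$; the remaining equality $\tin_w(J\cap R_N[\ux])=\langle\tin_w(G)\rangle$ is Proposition \ref{prop:stdin} applied directly to $I$. The main obstacle I anticipate is the careful interplay between \emph{generating} and \emph{standard basis}: because $>_w$ is only $t$-local, $G$ need not generate $I$ over $R_N[\ux]$, which is precisely why the enlargement $G\subseteq G'$ (controlled by Lemma \ref{lem:initialform}) and the compatibility of $>_w$ across different denominators (Remark \ref{rem:restriction}) are indispensable for making the saturation machinery interact cleanly with Buchberger's Criterion.
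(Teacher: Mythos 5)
Your proof is correct and takes essentially the same route as the paper: reduce to showing $J\cap R_{N\cdot M}[\ux]=\langle I\rangle_{R_{N\cdot M}[\ux]}$ using the saturation of $I$ and Lemma \ref{lem:saturation}, transfer the Buchberger certificate via Remark \ref{rem:restriction} to conclude that an enlarged $G'$ remains a standard basis over $R_{N\cdot M}[\ux]$, and finish with Proposition \ref{prop:stdin}. The only cosmetic differences are that you observe the $t^{1/N}$-saturation of $I$ directly from $t^{1/N}$ being a unit in $L$ (where the paper invokes Remark \ref{rem:saturation}), you route the identification $J\cap R_{N\cdot M}[\ux]=\langle I\rangle_{R_{N\cdot M}[\ux]}$ through Remark \ref{rem:saturation} at level $N\cdot M$ rather than Corollary \ref{cor:I=I^e}, and you re-derive $\langle\IN_w(G')\rangle=\langle\IN_w(G)\rangle$ via Lemma \ref{lem:initialform} rather than noting that both $G$ and $G'$ are standard bases of $I$ and citing Proposition \ref{prop:stdin} twice.
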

   \begin{proof}
     Enlarge $G$ to a generating set $G'$ of $I=J\cap R_N[\ux]$ over $R_N[\ux]$ by
     adding a finite number of elements of $I$. Then
     \begin{displaymath}
       \big\langle L_{>_w}(G')\big\rangle\subseteq
       \big\langle L_{>_w}(I)\big\rangle=
       \big\langle L_{>_w}(G)\big\rangle\subseteq
       \big\langle L_{>_w}(G')\big\rangle
     \end{displaymath}
     shows that $G'$ is still a standard basis of $I$ with respect to
     $>_w$. 
     So we can assume that $G=G'$.

     By Proposition \ref{prop:stdin} it suffices to show that $G$ is
     also a standard basis of $J\cap R_{N\cdot M}[\ux]$. Since by assumption
     \bmath
       J=\langle I\rangle_{L[\ux]}=\langle G\rangle_{L[\ux]},
     \emath
     Corollary \ref{cor:I=I^e} implies that
     \begin{displaymath}
       J\cap L_{N\cdot M}[\ux]=\langle G\rangle_{L_{N\cdot M}[\ux]}
       =S_{N\cdot M}^{-1}\langle G\rangle_{R_{N\cdot M}[\ux]}.
     \end{displaymath}
     Moreover, by Remark \ref{rem:saturation} the ideal $I=\langle G\rangle_{R_N[\ux]}$ is
     saturated with respect to $t^\frac{1}{N}$ and by Lemma
     \ref{lem:saturation} therefore also $\langle G\rangle_{R_{N\cdot
         M}[\ux]}$ is saturated with respect to $t^\frac{1}{N\cdot M}$,
     which implies that
     \begin{displaymath}
       J\cap R_{N\cdot M}[\ux]=
       S_{N\cdot M}^{-1}\langle G\rangle_{R_{N\cdot M}[\ux]}\cap R_{N\cdot M}[\ux]
       =\langle G\rangle_{R_{N\cdot M}[\ux]}.
     \end{displaymath}
     Since $G=(g_1,\ldots,g_k)$ is a standard basis of $I$
     every $\spoly(g_i,g_j)$, $i<j$, has a weak standard
     representation with respect to $G$ and $>_w$ over $R_N[\ux]$
     by Buchberger's Criterion \ref{cor:buchbergercriterion}, and
     these are of course also weak standard 
     representations over $R_{N\cdot M}[\ux]$, so that again by
     Buchberger's Criterion $G$ is a standard basis of $\langle
     G\rangle_{R_{N\cdot M}[\ux]}=J\cap R_{N\cdot M}[\ux]$. 
   \end{proof}

   \begin{corollary}\label{cor:initialideals2}
     Let $I\unlhd R_N[\ux]$ be an ideal such that $I=I:\big\langle
     t^\frac{1}{N}\big\rangle^\infty$, let $w\in \R_{<0}\times\R^n$, and let
     $G$ be a standard  basis of $I$ with respect to $>_w$. 

     Then for all $M\geq 1$
     \begin{displaymath}
       \IN_w\big(\langle
       I\rangle_{R_{N\cdot M}[\ux]}\big)=
       \big\langle\IN_w(G)\big\rangle
       \lhd K\big[t^\frac{1}{N\cdot M},\ux\big]
     \end{displaymath}
     and 
     \begin{displaymath}
       \tin_w\big(\langle
       I\rangle_{R_{N\cdot M}[\ux]}\big)=
       \big\langle\tin_w(G)\big\rangle=\tin_w(I)
       \lhd K[\ux]
     \end{displaymath}
   \end{corollary}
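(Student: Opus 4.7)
The plan is to reduce this statement to Corollary \ref{cor:initialideals} applied to the ideal $J=\langle I\rangle_{L[\ux]}$. For that I need two identifications: first $J\cap R_N[\ux]=I$, so that the given $G$ is a standard basis of $J\cap R_N[\ux]$ and the hypothesis $J=\langle J\cap R_N[\ux]\rangle_{L[\ux]}$ of Corollary \ref{cor:initialideals} holds automatically; second $J\cap R_{N\cdot M}[\ux]=\langle I\rangle_{R_{N\cdot M}[\ux]}$, so that the conclusion of Corollary \ref{cor:initialideals} computes exactly the initial ideals we want.

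The first identification is immediate from Remark \ref{rem:saturation}, which gives
\begin{displaymath}
J\cap R_N[\ux]=I:\big\langle t^{\frac{1}{N}}\big\rangle^\infty,
\end{displaymath}
together with the saturation hypothesis $I=I:\langle t^{\frac{1}{N}}\rangle^\infty$. The second identification is where the new work lies: setting $I':=\langle I\rangle_{R_{N\cdot M}[\ux]}$, Lemma \ref{lem:saturation} tells me that $I'$ is itself saturated with respect to $t^{\frac{1}{N\cdot M}}$. Since clearly $\langle I'\rangle_{L[\ux]}=J$, a second application of Remark \ref{rem:saturation} (now over $R_{N\cdot M}$) yields
\begin{displaymath}
J\cap R_{N\cdot M}[\ux]=I':\big\langle t^{\frac{1}{N\cdot M}}\big\rangle^\infty=I'=\langle I\rangle_{R_{N\cdot M}[\ux]},
\end{displaymath}
as required.

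With both identifications established, Corollary \ref{cor:initialideals} applied to $J$ and to the standard basis $G$ of $J\cap R_N[\ux]=I$ gives at once
\begin{displaymath}
\IN_w\big(\langle I\rangle_{R_{N\cdot M}[\ux]}\big)=\IN_w\big(J\cap R_{N\cdot M}[\ux]\big)=\big\langle\IN_w(G)\big\rangle
\end{displaymath}
and the analogous equality for $\tin_w$, where the last equality $\langle\tin_w(G)\rangle=\tin_w(I)$ is already part of the conclusion of Corollary \ref{cor:initialideals}.

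The only delicate point is the bookkeeping of which ring each saturation is taken in; once Lemma \ref{lem:saturation} is invoked to pass the saturation hypothesis from $R_N[\ux]$ to $R_{N\cdot M}[\ux]$, the statement is really just a repackaging of Corollary \ref{cor:initialideals}. I do not expect any genuine obstacle beyond this bookkeeping step.
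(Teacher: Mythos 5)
Your proposal is correct and follows essentially the same route as the paper's own proof: set $J=\langle I\rangle_{L[\ux]}$, use Remark \ref{rem:saturation} with the saturation hypothesis to identify $J\cap R_N[\ux]=I$, invoke Lemma \ref{lem:saturation} to carry saturation up to $R_{N\cdot M}[\ux]$ and hence identify $J\cap R_{N\cdot M}[\ux]=\langle I\rangle_{R_{N\cdot M}[\ux]}$, and then apply Corollary \ref{cor:initialideals}. The only difference is that you spell out the bookkeeping somewhat more explicitly than the paper does.
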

   \begin{proof}
     If we consider $J=\langle I\rangle_{L[\ux]}$ then by Remark
     \ref{rem:saturation} $J\cap R_N[\ux]=I$, and moreover, by Lemma
     \ref{lem:saturation} also $\langle I\rangle_{R_{N\cdot M}[\ux]}$
     is saturated with respect to $t^\frac{1}{N\cdot M}$, so that
     applying Remark \ref{rem:saturation} once again we also find
     $J\cap R_{N\cdot M}[\ux]=\langle I\rangle_{R_{N\cdot M}[\ux]}$. 
     The result therefore follows
     from Corollary \ref{cor:initialideals}.
   \end{proof}

   \begin{corollary}\label{cor:initialideals3}
     Let $J\unlhd L[\ux]$ be an ideal such that $J=\langle J\cap
     R_N[\ux]\rangle_{L[\ux]}$, let $w=(-1,0,\ldots,0)$ and let $M\geq 1$. Then
     \begin{displaymath}
       1\in\IN_\omega\big(J\cap R_N[\ux]\big)
       \;\;\;\Longleftrightarrow\;\;\;
       1\in\IN_\omega\big(J\cap R_{N\cdot M}[\ux]\big).
     \end{displaymath}
   \end{corollary}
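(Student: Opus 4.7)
The plan is to apply Corollary~\ref{cor:initialideals} in order to rewrite both $w$-initial ideals in terms of a single common standard basis $G$ of $J\cap R_N[\ux]$, and then to descend the relation ``$1$ lies in the ideal'' along the free ring extension $K\big[t^{\frac{1}{N}},\ux\big]\subseteq K\big[t^{\frac{1}{N\cdot M}},\ux\big]$.

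More precisely, I would first fix a standard basis $G=(g_1,\ldots,g_k)$ of $J\cap R_N[\ux]$ with respect to $>_w$. Applying Corollary~\ref{cor:initialideals} once with $M=1$ and once with the given $M$ yields
\begin{displaymath}
\IN_w\big(J\cap R_N[\ux]\big)=\big\langle\IN_w(G)\big\rangle\lhd K\big[t^{\frac{1}{N}},\ux\big]
\end{displaymath}
and
\begin{displaymath}
\IN_w\big(J\cap R_{N\cdot M}[\ux]\big)=\big\langle\IN_w(G)\big\rangle\lhd K\big[t^{\frac{1}{N\cdot M}},\ux\big].
\end{displaymath}
The forward implication ``$\Longrightarrow$'' is then immediate from the inclusion of these two polynomial rings.

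For the converse, I would mimic the argument at the heart of Corollary~\ref{cor:I^e}: the polynomial ring $K\big[t^{\frac{1}{N\cdot M}},\ux\big]$ is a free module over $K\big[t^{\frac{1}{N}},\ux\big]$ with basis $\big\{1,t^{\frac{1}{N\cdot M}},\ldots,t^{\frac{M-1}{N\cdot M}}\big\}$, by the same argument as Lemma~\ref{cor:free}. Given a representation $1=\sum_{i=1}^k h_i\cdot\IN_w(g_i)$ with $h_i\in K\big[t^{\frac{1}{N\cdot M}},\ux\big]$, I would expand $h_i=\sum_{j=0}^{M-1}h_{i,j}\cdot t^{\frac{j}{N\cdot M}}$ with $h_{i,j}\in K\big[t^{\frac{1}{N}},\ux\big]$. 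Since each $\IN_w(g_i)$ already lies in $K\big[t^{\frac{1}{N}},\ux\big]$, the product $h_{i,j}\cdot t^{\frac{j}{N\cdot M}}\cdot\IN_w(g_i)$ sits entirely in the $t^{\frac{j}{N\cdot M}}$-summand of the free-module decomposition, so comparing coefficients of the basis element $1$ on both sides of $1=\sum_{i,j}h_{i,j}\cdot\IN_w(g_i)\cdot t^{\frac{j}{N\cdot M}}$ yields $\sum_i h_{i,0}\cdot\IN_w(g_i)=1$, witnessing $1\in\big\langle\IN_w(G)\big\rangle_{K[t^{1/N},\ux]}$. There is no real obstacle: once Corollary~\ref{cor:initialideals} has been invoked, the remaining step is just the descent of a unit along the free ring extension, exactly as in Corollary~\ref{cor:I^e}.
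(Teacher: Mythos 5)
Your proof is correct, but it takes a genuinely different route from the paper's. The paper argues via quasihomogeneity: since $1$ and the ideal $\big\langle\IN_w(G)\big\rangle$ are $w$-quasihomogeneous, one may choose $w$-quasihomogeneous $h_i$ with each summand $h_i\cdot\IN_w(g_i)$ of $w$-degree zero, and then the specific choice $w=(-1,0,\ldots,0)$ forces $h_i\in K[\ux]$, since a $w$-quasihomogeneous element of non-negative $w$-degree in $K\big[t^{\frac{1}{N\cdot M}},\ux\big]$ must be $t$-free. Your argument instead descends along the free module extension $K\big[t^{\frac{1}{N}},\ux\big]\subset K\big[t^{\frac{1}{N\cdot M}},\ux\big]$ with basis $\big\{1,t^{\frac{1}{N\cdot M}},\ldots,t^{\frac{M-1}{N\cdot M}}\big\}$, exactly as in the proof of Corollary~\ref{cor:I^e}, and extracts the coefficient of the basis element $1$. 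Both are correct; the interesting difference is that your argument is visibly independent of the particular shape of $w$ (it works for any $w\in\R_{<0}\times\R^n$, since the only thing used is that $\IN_w(g_i)\in K\big[t^{\frac{1}{N}},\ux\big]$), whereas the paper's quasihomogeneity argument crucially exploits $w=(-1,0,\ldots,0)$. So your route actually establishes a slightly stronger fact than the corollary states; the paper's route is more in the spirit of the graded-ring viewpoint used throughout the section but does not generalise to other weight vectors. Both the forward implication and the reduction to a representation $1=\sum_i h_i\cdot\IN_w(g_i)$ via Corollary~\ref{cor:initialideals} are handled the same way in both proofs.
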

   \begin{proof}
     Suppose that $f\in J\cap R_{N\cdot M}[\ux]$ with
     $\IN_\omega(f)=1$, and let $G=(g_1,\ldots,g_k)$ be standard basis
     of $J\cap R_N[\ux]$ with respect to $>_w$. By Corollary
     \ref{cor:initialideals} 
     \begin{displaymath}
       1=\IN_\omega(f)\in\big\langle\IN_\omega(g_1),\ldots,\IN_\omega(g_k)\big\rangle
       \lhd K\big[t^\frac{1}{N\cdot M},\ux\big],
     \end{displaymath}
     and since this ideal and $1$ are $w$-quasihomogeneous, there
     exist $w$-quasihomogeneous elements $h_1,\ldots,h_k\in
     K\big[t^\frac{1}{N\cdot M},\ux\big]$ such that
     \begin{displaymath}
       1=\sum_{i=1}^k h_i\cdot \IN_\omega(g_i),
     \end{displaymath}
     where each summand on the right hand side (possibly zero) is
     $w$-quasihomogeneous of $w$-degree zero. Since
     $w=(-1,0,\ldots,0)$ this forces $h_i\in K[\ux]$ for all
     $i=1,\ldots,k$ and thus $1\in \IN_\omega(J\cap R_N[\ux])$. The
     converse is clear anyhow. 
   \end{proof}

   We want to conclude the section by a remark on the saturation.

   \begin{proposition}\label{prop:saturation}
     If $f_1,\ldots,f_k\in K[t,\ux]$ and $I=\langle
     f_1,\ldots,f_k\rangle\unlhd K[t]_{\langle t\rangle}[\ux]$ then
     \begin{displaymath}
       \langle I\rangle_{R_1[\ux]}:
       \langle t\rangle^\infty
       =
       \big\langle I:\langle  t\rangle^\infty
       \big\rangle_{R_1[\ux]}.
     \end{displaymath}
   \end{proposition}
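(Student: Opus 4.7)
The plan is to prove the two inclusions separately, with the nontrivial one reducing to a flatness input. The direction $\supseteq$ is immediate: if $g\in I:\langle t\rangle^\infty$, then $t^{N}g\in I\subseteq\langle I\rangle_{R_1[\ux]}$ for some $N$, whence $g\in\langle I\rangle_{R_1[\ux]}:\langle t\rangle^\infty$. Since the latter is an ideal of $R_1[\ux]$, passing to the generated ideal yields $\langle I:\langle t\rangle^\infty\rangle_{R_1[\ux]}\subseteq\langle I\rangle_{R_1[\ux]}:\langle t\rangle^\infty$.

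For the reverse inclusion, set $A:=K[t]_{\langle t\rangle}[\ux]$, $B:=R_1[\ux]$ and $J:=I:\langle t\rangle^\infty\unlhd A$. The plan is to show that $JB=\langle I:\langle t\rangle^\infty\rangle_{R_1[\ux]}$ is $t$-saturated in $B$; since $JB$ already contains $\langle I\rangle_B$, it will then also contain the smallest $t$-saturated ideal of $B$ containing $\langle I\rangle_B$, namely $\langle I\rangle_B:\langle t\rangle^\infty$, which is exactly the desired inclusion. To establish the $t$-saturation of $JB$ I would invoke that $B$ is flat over $A$: indeed $R_1=K[[t]]$ is the $\langle t\rangle$-adic completion of the Noetherian local ring $K[t]_{\langle t\rangle}$, hence faithfully flat over it, and polynomial base change to $\ux$ preserves flatness. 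The statement ``$J$ is $t$-saturated'' is equivalent to saying that $\cdot\,t\colon A/J\hookrightarrow A/J$ is injective; tensoring this injection with the flat $A$-module $B$ produces the injection $\cdot\,t\colon B/JB\hookrightarrow B/JB$, and hence $JB$ is $t$-saturated.

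The main obstacle is the invocation of flatness of the completion $K[[t]]$ over $K[t]_{\langle t\rangle}$, an external fact from commutative algebra not developed within the standard-basis machinery of the paper. A tempting self-contained alternative is to use the techniques sketched in Remark \ref{rem:polynomialcase} to produce a standard basis $H\subset K[t,\ux]$ of $J$ with respect to a $t$-local monomial ordering global in $\ux$, and to observe via Corollary \ref{cor:polynomialcase} that $H$ is simultaneously a standard basis of $\langle H\rangle_{R_1[\ux]}=JB$; however, the leading ideal $L_>(J)$ itself need not be $t$-saturated as a monomial ideal (for instance for $J=\langle tx-y\rangle$ with a lex-type ordering satisfying $x>y$, one obtains $L_>(J)=\langle tx\rangle$), so a naive combinatorial argument reducing leading monomials does not give the $t$-saturation of $JB$, and the flatness input appears to be essential.
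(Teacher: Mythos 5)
Your proof is correct, and it takes a genuinely different route from the paper's. The paper stays inside the standard-basis framework it is developing: with the $t$-local ordering $>$ that is global in $\ux$ (so that $K[t,\ux]_>=K[t]_{\langle t\rangle}[\ux]$ and $R_1[\ux]_>=R_1[\ux]$), it appeals to Remark~\ref{rem:polynomialcase} to run the saturation procedure (iterated ideal quotient and membership test, each step a polynomial standard-basis computation in $K[t,\ux]$) \emph{once}, producing a single finite set $G\subset K[t,\ux]$ which, by Corollary~\ref{cor:polynomialcase}, is simultaneously a standard basis of $I:\langle t\rangle^\infty$ over $K[t]_{\langle t\rangle}[\ux]$ and of $\langle I\rangle_{R_1[\ux]}:\langle t\rangle^\infty$ over $R_1[\ux]$; since a standard basis generates the ideal in the localized ring (Proposition~\ref{prop:basicstd}), both sides of the claimed equality are $\langle G\rangle_{R_1[\ux]}$. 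Your argument instead imports the flatness of the completion $K[t]_{\langle t\rangle}\hookrightarrow K[[t]]$, base-changes along $K[\ux]$ to get $A=K[t]_{\langle t\rangle}[\ux]\hookrightarrow B=R_1[\ux]$ flat, and tensors the injection $\cdot t\colon A/J\hookrightarrow A/J$ (with $J=I:\langle t\rangle^\infty$) with $B$ to conclude that $JB$ is $t$-saturated; together with the easy inclusion and the minimality of $\langle I\rangle_B:\langle t\rangle^\infty$ among $t$-saturated ideals containing $\langle I\rangle_B$, this gives the statement. Your route is shorter and more transparent at the cost of a commutative-algebra fact external to the paper; the paper's route avoids that input but leans on the informally stated Remark~\ref{rem:polynomialcase}. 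Your concern that $L_>(J)$ need not be $t$-saturated even when $J$ is (your example $J=\langle tx-y\rangle$, $L_>(J)=\langle tx\rangle$, is correct) is well taken, but it does not block the paper's argument, which never inspects the leading ideal of $J$ directly and instead runs the saturation algorithm over both rings in parallel.
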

   \begin{proof}
     Let $>_1$ be any global monomial ordering on $\Mon(\ux)$ and
     define a $t$-local monomial ordering on $\Mon\big(t,\ux)$ by
     \begin{displaymath}
       t^\alpha\cdot \ux^\beta\;>\;t^{\alpha'}\cdot \ux^{\beta'}       
     \end{displaymath}
     if and only if
     \begin{displaymath}
       \ux^\alpha\;>_1\;\ux^{\alpha'}
       \;\mbox{ or }\;
       \big(\ux^\alpha=\ux^{\alpha'}\;\mbox{ and }\;\alpha<\alpha'\big).
     \end{displaymath}
     Then 
     \begin{displaymath}
       \{f\in R_1[\ux]\;|\;\lt_>(f)=1\}=\{1+t\cdot p\;|\;p\in K[t]\},
     \end{displaymath}
     and thus
     \begin{displaymath}
       R_1[\ux]_>=R_1[\ux]\;\;\;\mbox{ and }\;\;\;
       K[t,\ux]_>=K[t]_{\langle t\rangle}[\ux].
     \end{displaymath}
     Using \lang{Algorithm \ref{alg:saturation}}\kurz{Remark \ref{rem:polynomialcase}} we
     can compute at the same time a standard basis of $\langle
     I\rangle_{R_1[\ux]}:\langle t\rangle^\infty$ and of $\langle
     I\rangle_{K[t]_{\langle t\rangle}[\ux]}:\langle t\rangle^\infty$ 
     with respect to $>$. Since a standard basis is a generating set
     in the localised ring the result follows.
   \end{proof}
   
   \lang{

   \section{Some Properties of $R_N[\ux]$, $L_N[\ux]$, and $L[\ux]$}\label{sec:properties}

   In this section we gather some straight forward but useful
   properties of the rings we are working with and their relations
   among each other.

   \begin{lemma}\label{lem:free}
     $R_{N\cdot M}$ is a finite
     free $R_N$-module with basis $\big\{1,t^{\frac{1}{N\cdot
         M}},\ldots,t^{\frac{M-1}{N\cdot M}}\big\}$.

     In particular, $R_{N\cdot M}$
     is integral over $R_N$.
   \end{lemma}
   \begin{proof}
     Note that $f=\sum_{i=0}^\infty a_i\cdot t^\frac{i}{N\cdot M}\in R_{N\cdot M}$
     can be written as
     \begin{displaymath}
       f=\sum_{j=0}^{M-1} t^\frac{j}{N\cdot M}\cdot \sum_{i=0}^\infty
       a_{j+i\cdot M}\cdot t^\frac{i}{N}\in 
       \left\langle 1,t^{\frac{1}{N\cdot
         M}},\ldots,t^{\frac{M-1}{N\cdot M}}\right\rangle_{R_N}.
     \end{displaymath}
     Moreover, since no terms can cancel $f=0$ if and only if 
     \begin{displaymath}
       \sum_{i=0}^\infty
       a_{j+i\cdot M}\cdot t^\frac{i}{N}=0
     \end{displaymath}
     for all $j=0,\ldots,M-1$. Thus $R_{N\cdot M}$ is free over $R_N$
     with basis $\big\{1,t^{\frac{1}{N\cdot
         M}},\ldots,t^{\frac{M-1}{N\cdot M}}\big\}$.
   \end{proof}

   \begin{corollary}\label{cor:free}
     $R_{N\cdot M}[\ux]$ is a
     free $R_N[\ux]$-module with basis $\big\{1,t^{\frac{1}{N\cdot
         M}},\ldots,t^{\frac{M-1}{N\cdot M}}\big\}$.     
   \end{corollary}
   \begin{proof}
     If $f=\sum_{|\alpha|=0}^d a_{\alpha}\cdot \ux^\alpha\in R_{N\cdot
       M}[\ux]$ with $a_{\alpha}\in R_{N\cdot M}$ is given, then by
     Lemma \ref{lem:free} there
     exist $a_{\alpha,i}\in R_N$ such that
     \begin{displaymath}
       a_\alpha=\sum_{i=0}^{M-1}a_{\alpha,i}\cdot t^\frac{i}{N\cdot M},
     \end{displaymath}
     and thus 
     \begin{displaymath}
       f=\sum_{i=0}^{M-1}t^\frac{i}{N\cdot M}\cdot \sum_{|\alpha|=0}^d
       a_{\alpha,i}\cdot \ux^\alpha\in 
       \left\langle 1,t^{\frac{1}{N\cdot
         M}},\ldots,t^{\frac{M-1}{N\cdot M}}\right\rangle_{R_N[\ux]}.
     \end{displaymath}
 
     Suppose now that 
     \begin{displaymath}
       \sum_{i=0}^{M-1}t^\frac{i}{N\cdot M}\cdot f_i=0
     \end{displaymath}
     with 
     \begin{displaymath}
       f_i=\sum_{|\alpha|=0}^d a_{\alpha,i}\cdot \ux^\alpha\in R_N[\ux].
     \end{displaymath}
     Then
     \begin{displaymath}
       0=\sum_{|\alpha|=0}^d\ux^\alpha\cdot
       \sum_{i=0}^{M-1}t^\frac{i}{N\cdot M}\cdot a_{\alpha,i},
     \end{displaymath}
     and since the $\ux^\alpha$ are linearly independent over $R_N$ it
     follows that
     \begin{displaymath}
       \sum_{i=0}^{M-1}t^\frac{i}{N\cdot M}\cdot a_{\alpha,i}=0 
     \end{displaymath}
     for all $|\alpha|\leq d$. But then by Lemma \ref{lem:free} we
     have $a_{\alpha,i}=0$ for all $i$ and and all $\alpha$, which
     implies that $f_i=0$ for $i=0,\ldots, M-1$. 
   \end{proof}

   \begin{corollary}\label{cor:algebraic}
     $L$ is algebraic over $L_N$.
   \end{corollary}
   \begin{proof}     
     If $A\subset B$ is an integral extension of integral domains,
     then $\Quot(A)\subset\Quot(B)$ is algebraic. For this consider 
     $0\not=b\in B$ and the integral relation $b^k+a_1\cdot
     b^{k-1}+\ldots +a_k=0$ with $a_i\in A$ which it fulfils by
     assumption. Then 
     \begin{displaymath}
       a_k\cdot \frac{1}{b^k}+\ldots+a_1\cdot\frac{1}{b}+1=0
     \end{displaymath}
     is an algebraic relation of $\frac{1}{b}$ over $\Quot(A)$. This
     shows that $\Quot(B)$ is algebraic over $\Quot(A)$, since every
     element of the former is of the form $\frac{b'}{b}$. In
     particular, $L_{N\cdot
       M}=\Quot(R_{N\cdot M})$ is algebraic over $L_N=\Quot(R_N)$.

     If $a\in L$, then there is an $M$ such that $a\in L_M\subseteq
     L_{N\cdot M}$, and
     therefore $a$ is algebraic over $L_N$. This shows that $L$ is
     algebraic over $L_N$. 
   \end{proof}

   \begin{corollary}\label{cor:integrality}
     $L[\ux]$ is integral over $L_N[\ux]$.
   \end{corollary}
   \begin{proof}
     If $A\subset B$ is an integral ring extension, then so is
     $A[x]\subset B[x]$. To see this let $f=\sum_{i=0}^kb_i\cdot x^i\in B[x]$
     be given. Then $b_i$ is integral, over $A$ and thus it is integral
     over $A[x]$. But since $x^i\in A[x]$ we have that 
     \begin{displaymath}
       f=\sum_{i=0}^kb_ix^i\in A[x][b_0,\ldots,b_k]
     \end{displaymath}
     is an element of the integral ring extension $A[x]\subset
     A[x][b_0,\ldots,b_k]$. This shows that $B[x]$ is integral over
     $A[x]$. The result follows thus from Corollary \ref{cor:algebraic}.
   \end{proof}

   \begin{corollary}\label{cor:going}
     The ring extension $L_N[\ux]\subset L[\ux]$ satisfies the
     lying-over, going-up and the going-down property.
   \end{corollary}
   \begin{proof}
     See \cite[Prop.\ 5.10, Thm.\ 5.11  and Thm.\ 5.16]{AM69}.
   \end{proof}

   \begin{corollary}\label{cor:dimLLN}
     Let $I\unlhd L[\ux]$ be an ideal then $L[\ux]/I$ is integral over
     $L_N[\ux]/I\cap L_N[\ux]$. 

     In particular, $\dim(I)=\dim(I\cap L_N[\ux])$.
   \end{corollary}
   \begin{proof}
     See \cite[Prop.\ 5.6]{AM69}  and \cite[Prop.\ 9.2]{Eis96}.
   \end{proof}

   }



\providecommand{\bysame}{\leavevmode\hbox to3em{\hrulefill}\thinspace}

\end{document}